\def\Rset{\mathbb{R}}
\def\Cset{\mathbb{C}}
\def\Nset{\mathbb{N}}
\def\Kset{\mathbb{K}}
\def\fsep{\mathrm{sep}(f)}
\theoremstyle{plain}
\newtheorem{theorem}{Theorem}[section]
\newtheorem{lemma}[theorem]{Lemma}
\newtheorem{corollary}[theorem]{Corollary}
\theoremstyle{definition}
\newtheorem{definition}[theorem]{Definition}
\newtheorem{remark}[theorem]{Remark}
\newtheorem{example}[theorem]{Example}
\newtheorem{step}{Step}
\newtheorem*{correctness41}{\bf Proof of the correctness of Definition~4.1}
\newtheorem*{correctness51}{\bf Proof of the correctness of Definition~5.1}
\numberwithin{equation}{section}
\begin{document}

\begin{frontmatter}

\begin{fmbox}
\dochead{Research}

\title{On the convergence of high-order Ehrlich-type iterative methods for approximating all zeros of a polynomial simultaneously}

\author[
   addressref={aff1},
   corref={aff1},
   email={proinov@uni-plovdiv.bg}
]{\inits{PD}\fnm{Petko D} \snm{Proinov}}
\author[
   addressref={aff1},
   email={mariavasileva@uni-plovdiv.bg}
]{\inits{MT}\fnm{Maria T.} \snm{Vasileva}}

\address[id=aff1]{%
  \orgname{Faculty of Mathematics and Informatics, University of Plovdiv},
  \postcode{4000}
  \city{Plovdiv},
  \cny{Bulgaria}
}

\end{fmbox}

\begin{abstractbox}

\begin{abstract}
We study a family of high order Ehrlich-type methods for approximating all zeros of a polynomial simultaneously.
Let us denote by $T^{(1)}$ the famous Ehrlich method (1967).
Starting from $T^{(1)}$, Kjurkchiev and Andreev (1987) have introduced recursively a sequence ${(T^{(N)})_{N = 1}^\infty}$
of iterative methods for simultaneous finding polynomial zeros.
For given $N \ge 1$, the Ehrlich-type method $T^{(N)}$ has the order of convergence ${2 N + 1}$.
In this paper, we establish two new local convergence theorems as well as a semilocal convergence theorem 
(under computationally verifiable initial conditions and with a posteriori error estimate) for the Ehrlich-type methods $T^{(N)}$.
Our first local convergence theorem generalizes a result of Proinov (2015) and improves the result of Kjurkchiev and Andreev (1987).
The second local convergence theorem generalizes another recent result of Proinov (2015), but only in the case of maximum-norm.
Our semilocal convergence theorem is the first result in this direction.

\end{abstract}

\begin{keyword}
\kwd{simultaneous methods}
\kwd{Ehrlich method}
\kwd{polynomial zeros}
\kwd{accelerated convergence}
\kwd{local convergence}
\kwd{semilocal convergence}
\kwd{error estimates}
\end{keyword}

\begin{keyword}[class=MSC]
\kwd[Primary ]{65H04}
\kwd[; secondary ]{12Y05}
\end{keyword}

\end{abstractbox}

\end{frontmatter}

%
%

\section{Introduction}
\label{sec:introduction}

Throughout this paper ${(\Kset,|\cdot|)}$ denotes an algebraically closed field and
$\Kset[z]$ denotes the ring of polynomials (in one variable) over $\Kset$.
For a given vector $x$ in $\Kset^n$, ${x_i}$ always denotes the $i$th coordinate of $x$.
In particular, if $F$ is a map with values in $\Kset^n$, then ${F_i(x)}$ denotes the $i$th coordinate of the vector $F(x)$.
We endow the vector space $\Kset^n$ with a norm $\|x\|_p$ defined as usual:
\[
\|x\|_p = \left( \sum _{i = 1} ^n |x_i|^p \right) ^{1/p} \qquad (1 \le p \le \infty)
\]
and with coordinate-wise ordering $\preceq$ defined by
\begin{equation} \label{eq:coordinate-wise-ordering}
x \preceq y  \quad\text{if and only if}\quad x_i \le y_i \,\, \text{ for each } \,\, i = 1, \ldots, n
\end{equation}
for ${x, y \in \Rset^n}$. Then ${(\Rset^n,\|\cdot\|_p)}$ is a solid vector space.
Also, we endow $\Kset^n$ with the cone norm ${\|\cdot\| \colon \Kset^n \to \Rset^n }$ defined by
\[
\|x\| = (|x_1|,\ldots,|x_n|).
\]
Then ${(\Kset^n,\|\cdot\|, \preceq)}$ is a cone normed space over $\Rset^n$ (see, e.g., Proinov \cite{Pro13}).

Let ${f \in \Kset[z]}$ be a polynomial of degree ${n \ge 2}$.
A vector ${\xi \in \Kset^n}$ is said to be a root-vector of $f$ if
${f(z) = a_0 \prod _{i = 1}^ n (z - \xi_i)}$ for all ${z \in \Kset}$,
where ${a_0 \in \Kset}$.
We denote with $\fsep$  the \emph{separation number} of $f$ which is defined to be the minimum distance between two distinct zeros of $f$.


\subsection{The Weierstrass method and Weierstrass correction}
\label{subsec:Weierstrass-method}

In the literature, there are a lot of iterative methods for finding all zeros of $f$ simultaneously (see, e.g., the monographs of
Sendov, Andreev and Kjurkchiev \cite{SAK94},
Kjurkchiev \cite{Kyu98},
McNamee \cite{McN07} and
Petkovi\'c \cite{Pet08} and references given therein).
In 1891, Weierstrass \cite{Wei91} published his famous iterative method for simultaneous computation of all zeros of $f$.
The \emph{Weierstrass method} is defined by the following iteration
\begin{equation} \label{eq:Weierstrass-iteration}
x^{(k + 1)}  = x^{(k)} - W_f(x^{(k)}), \qquad k = 0, 1, 2,\ldots,
\end{equation}
where the operator ${W_f \colon \mathcal{D} \subset \Kset^n \to \Kset^n}$ is defined by
\begin{equation} \label{eq:Weierstrass-correction}
W_f(x) = (W_1(x),\ldots,W_n(x)) \quad\text{with}\quad
W_i(x) = \frac{f(x_i)}{ a_0 \prod_{j \ne i} {(x_i  - x_j )}} \quad (i = 1,\ldots,n),
\end{equation}
where ${a_0 \in \Kset}$ is the leading coefficient of $f$ and the domain $\mathcal{D}$ of $W$ is the set of all vectors in $\Kset^n$ with distinct components.
The Weierstrass method \eqref{eq:Weierstrass-iteration} has second-order of convergence provided that all zeros of $f$ are simple.
The operator $W_f$ is called \emph{Weierstrass correction}. We should note that $W_f$ plays an important role in many semilocal convergence theorems for simultaneous methods.


\subsection{The Ehrlich method}
\label{subsec:Ehrlich-method}

Another famous iterative method for finding simultaneously all zeros of a polynomial $f$ was introduced by Ehrlich \cite{Ehr67} in 1967.
The \emph{Ehrlich method} is defined by the following fixed point iteration:
\begin{equation} \label{eq:Ehrlich-method}
x^{(k + 1)} = T(x^{(k)}), \qquad k = 0, 1, 2, \ldots,
\end{equation}
where the operator ${T \colon \mathscr{D} \subset \Kset^n \to \Kset^n}$ is defined by ${T(x) = (T_1(x),\ldots,T_n(x))}$ with
\begin{equation} \label{eq:Ehrlich-correction}
T_i(x) = x_i - \frac{f(x_i)}{f'(x_i) - f(x_i) \displaystyle\sum_{j \ne i} \frac{1}{x_i - x_j}} \qquad (i = 1,\ldots,n)
\end{equation}
and the domain of $T$ is the set
\begin{equation} \label{eq:domain-Ehrlich-method}
\mathscr{D} =
\{ x \in  \mathcal{D} \colon f'(x_i) - f(x_i) \displaystyle \sum_{j \ne i} \frac{1}{x_i - x_j} \ne 0 \, \text{ for } i \in I_n \}.
\end{equation}
Here and throughout the paper, we  denote by $I_n$ the set of indices ${1, \ldots, n}$, that is ${I_n = \{1, \ldots, n\}}$.
The Ehrlich method has third-order of convergence if all zeros of $f$ are simple.
The Ehrlich method was rediscovered by Abert \cite{Abe73} in 1973.
In 1970, B\"orsch-Supan \cite{Bor70} introduced another third-order method for numerical computation of all zeros of a polynomial simultaneously.
In 1982, Werner \cite{Wer82} has proved that the both methods are identical.
The Ehrlich method \eqref{eq:Ehrlich-method} is known also as ``Ehrlich-Abert method'', ``B\"orsch-Supan method'' and ``Abert method''.

Recently, Proinov \cite{Pro15c} obtained two local convergence theorems for Ehrlich method under different types of initial conditions. 
The first one generalizes and improves the results of
Kyurkchiev and Tashev \cite{KT81,TK83} and
Wang and Zhao \cite[Theorem~2.1]{WZ89}.
The second one generalizes and improves the results of
Wang and Zhao \cite[Theorem~2.2]{WZ89}
and Tilli \cite[Theorem~3.3]{Til98}.

Before we state the two results of \cite{Pro15c}, we need some notations which will be used throughout the paper.
For given vectors ${x \in \Kset^n}$ and ${y \in \Rset^n}$, we define in ${\Rset^n}$ the vector
\[
\frac{x}{y} = \left( \frac{|x_1|}{y_1},\ldots,\frac{|x_n|}{y_n} \right),
\]
provided that $y$ has no zero components. Given $p$ such that ${1 \le p \le \infty}$,
we always denote by $q$ the conjugate exponent of $p$, i.e. $q$ is defined by means of
\[
1 \le q \le \infty \quad\text{and}\quad 1/p + 1/q = 1.
\]
In the sequel, we use the function ${d \colon \Kset^n \to \Rset^n}$ defined by ${d(x) = (d_1(x),\ldots,d_n(x))}$ with
\[
d_i(x) = \min_{j \ne i} |x_i - x_j| \qquad (i = 1,\ldots,n).
\]
Let ${a > 0}$ and ${b \ge 1}$. We define the real function $\phi$ by
\begin{equation} \label{eq:phi-Proinov}
  \phi(t) = \frac{a t^2}{(1 - t) (1 - b t)  - a t^2}
\end{equation}
and the real number $R$ as follows
\begin{equation} \label{eq:R-Proinov}
R = \frac{2}{b + 1 + \sqrt{(b - 1)^2 + 8a}} \, .
\end{equation}

\begin{theorem}[Proinov \cite{Pro15c}]  \label{thm:Proinov-Ehrlich-local-first}
Let ${f \in \Kset[z]}$ be a polynomial of degree ${n \ge 2}$
which has only simple zeros,
${\xi \in \Kset^n}$ be a root-vector of $f$ and ${1 \le p \le \infty}$.
Suppose ${x^{(0)} \in \Kset^n}$ is an initial guess satisfying
\begin{equation} \label{eq:Proinov-initial-condition-first-local}
E(x^{(0)}) = \left \| \frac{x^{(0)} - \xi}{d(\xi)} \right \|_p < R = \frac{2}{b + 1 + \sqrt{(b - 1)^2 + 8a}},
\end{equation}
where  ${a = (n - 1) ^{1/q}}$ and ${b = 2^ {1/q}}$.
Then Ehrlich iteration \eqref{eq:Ehrlich-method} is well-defined and converges cubically to $\xi$ with error estimates
\[
 \|x^{(k + 1)} - \xi\| \preceq \lambda^{3^k} \|x^{(k)} - \xi\|
 \quad\text{and}\quad
  \|x^{(k)} - \xi\| \preceq \lambda^{(3^k - 1)/2} \|x^{(0)} - \xi\|
\]
for all ${k \ge 0}$, where ${\lambda = \phi(E(x^{(0)}))}$ and the function $\phi$ is defined by \eqref{eq:phi-Proinov}.
\end{theorem}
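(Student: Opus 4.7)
The plan is to reduce everything to a single ``one-step'' estimate and then iterate it using the order-two gauge-function structure of $\phi$. The central lemma I would aim to prove first is: whenever $x \in \Kset^n$ has pairwise distinct components and $E(x) < R$, then $x$ lies in the domain $\mathscr{D}$ of the Ehrlich operator, and
\[
\|T(x) - \xi\| \preceq \phi(E(x)) \, \|x - \xi\|,
\]
which in particular implies $E(T(x)) \le \phi(E(x)) E(x) < E(x)$.

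The first step in proving the lemma is to derive the identity
\[
T_i(x) - \xi_i = \frac{(x_i - \xi_i)^2 \, S_i(x)}{1 + (x_i - \xi_i) \, S_i(x)}, \qquad
S_i(x) = \sum_{k \ne i} \frac{\xi_k - x_k}{(x_i - \xi_k)(x_i - x_k)},
\]
obtained by rewriting the denominator of the Ehrlich correction via $f'(z)/f(z) = \sum_{k} 1/(z - \xi_k)$ and simplifying. The cubic order of the method is already visible: the factor $(x_i - \xi_i)^2$ contributes two powers of the error and $S_i$ contributes one more. I would then bound $|S_i(x)|$ from above and $|1 + (x_i - \xi_i) S_i(x)|$ from below using $|x_i - \xi_k| \ge |\xi_i - \xi_k| - |x_i - \xi_i|$ and the analogous three-term estimate for $|x_i - x_k|$. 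Applying H\"older's inequality with conjugate exponent $q$ to the resulting sum over $n-1$ indices produces precisely the constants $a = (n-1)^{1/q}$ (from $\|(1,\dots,1)\|_q$ of length $n-1$) and $b = 2^{1/q}$ (from packaging two error contributions into a single H\"older pairing). These bounds show that the denominator dominates $(1 - E(x))(1 - b E(x)) - a E(x)^2$ while the numerator is controlled by $a E(x)^2 |x_i - \xi_i|$, yielding exactly the $\phi$-factor; the hypothesis $E(x) < R$ is what guarantees the denominator is positive, so $x \in \mathscr{D}$ and $\phi(E(x)) < 1$.

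The main obstacle is this H\"older step: to obtain the sharp constants $a$ and $b$ with the correct exponent $1/q$, the geometric weights and the error terms inside $S_i$ must be paired so that a single H\"older application absorbs everything cleanly. A loose triangle-inequality bound or a clumsy splitting would enlarge these constants and shrink the admissible radius below $R$, so matching the statement of the theorem demands this tight estimate.

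Once the lemma is established, the error estimates follow by induction. The crucial auxiliary fact is that $\phi$, as defined in \eqref{eq:phi-Proinov}, is a gauge function of order two: $\phi(s t) \le s^2 \phi(t)$ for $0 \le s \le 1$ and $t \in [0, R)$, which can be verified directly from the explicit formula. Setting $\varepsilon_k = E(x^{(k)})$ and $\lambda = \phi(\varepsilon_0)$, the lemma gives $\varepsilon_{k+1} \le \phi(\varepsilon_k)\,\varepsilon_k$, so the sequence $\{\varepsilon_k\}$ is decreasing and stays below $R$, keeping the iteration well-defined. The gauge property then yields $\phi(\varepsilon_{k+1}) \le \phi(\varepsilon_k)^3$ by induction, hence $\phi(\varepsilon_k) \le \lambda^{3^k}$. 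Combining with the coordinate-wise bound of the lemma gives the first error estimate in the theorem, and telescoping it via $\sum_{j=0}^{k-1} 3^j = (3^k - 1)/2$ yields the second.
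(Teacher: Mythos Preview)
Your proposal is correct and follows essentially the same route as the paper's proof of its Theorem~4.6 (which specializes to the stated result when $N=1$): the identity you write is exactly the paper's Lemma~4.4 with $\sigma_i = -(x_i-\xi_i)S_i(x)$, the bounds on numerator and denominator come from the same triangle/H\"older estimates (the paper's Lemmas~3.1 and~3.3), and your by-hand iteration using the degree-two quasi-homogeneity of $\phi$ is precisely what the paper's abstract machinery (Corollary~2.8) encodes. The only structural difference is that the paper packages the induction into its general iterated-contraction framework, whereas you unroll it explicitly; both yield the same constants and the same error estimates.
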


\begin{theorem}[Proinov \cite{Pro15c}]  \label{thm:Proinov-Ehrlich-local-second}
Let ${f \in \Kset[z]}$ be a polynomial of degree ${n \ge 2}$,
${\xi \in \Kset^n}$ be a root-vector of $f$ and ${1 \le p \le \infty}$.
Suppose ${x^{(0)} \in \Kset^n}$ is a vector with distinct components satisfying
\begin{equation} \label{eq:Proinov-initial-condition-second-local}
E(x^{(0)}) = \left \| \frac{x^{(0)} - \xi}{d(x^{(0)})} \right \|_p \le R = \frac{2}{b + 1 + \sqrt{(b - 1)^2 + 8a}},
\end{equation}
where ${a = (n - 1) ^{1/q}}$ and ${b = 2^ {1/q}}$.
Then $f$ has only simple zeros in $\Kset$ and Ehrlich iteration \eqref{eq:Ehrlich-method} is well-defined and converges to $\xi$ with error estimates
\[
 \|x^{(k + 1)} - \xi\| \preceq \theta \lambda^{3^k} \|x^{(k)} - \xi\|
 \quad\text{and}\quad
  \|x^{(k)} - \xi\| \preceq \theta^k \lambda^{(3^k - 1)/2} \|x^{(0)} - \xi\|
\]
for all ${k \ge 0}$, where ${\lambda = \phi(E(x^{(0)}))}$, ${\theta = \psi(E(x^{(0)}))}$ and the function $\phi$ is defined by
\eqref{eq:phi-Proinov} and the function $\psi$  by
\begin{equation} \label{eq:psi-Proinov}
   \psi(t) = \frac{(1 - t) (1 - b t)  - a t^2}{1 - t - a t^2}  \, .
\end{equation}
Moreover, the method converges cubically to $\xi$ provided that ${E(x^{(0)}) < R}$. 	
\end{theorem}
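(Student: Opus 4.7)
The plan follows the framework Proinov develops in \cite{Pro15c}: establish a one-step coordinate-wise contraction, a geometric distortion lemma for the spacings $d(x^{(k)})$, and then assemble them into a scalar recursion on $E(x^{(k)})$. The essential novelty compared to Theorem~\ref{thm:Proinov-Ehrlich-local-first} is that the reference denominator $d(x^{(k)})$ in $E$ now moves with the iteration, so the distortion lemma is indispensable.

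The first step is a contraction lemma: if $x \in \Kset^n$ has distinct components and $t := E(x) \le R$, then $x \in \mathscr{D}$ and
\[
|T_i(x) - \xi_i| \le \phi(t)\,|x_i - \xi_i| \qquad (i \in I_n).
\]
This is obtained by using $f(z) = a_0 \prod_j(z-\xi_j)$ to rewrite $f'/f = \sum_j 1/(z - \xi_j)$, which reduces the Ehrlich correction to a single quotient whose numerator and denominator are $(n-1)$-fold sums over $j \ne i$. Hölder's inequality with conjugate exponents $(p,q)$ then produces $\phi$ together with the constants $a = (n-1)^{1/q}$ and $b = 2^{1/q}$. The choice of $R$ in \eqref{eq:R-Proinov} is exactly what keeps the denominator $(1-t)(1-bt) - at^2$ of $\phi$ positive on $[0,R]$.

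The second step is the distortion estimate $d_i(T(x)) \ge d_i(x)/\psi(t)$ for each $i$, proved by expanding $T_i(x) - T_j(x) = (x_i - x_j) + (T_i(x) - x_i) - (T_j(x) - x_j)$, bounding the last two summands by the same Hölder estimate as in Step~1 applied simultaneously at indices $i$ and $j$, and then taking the minimum over $j$; the form of $\psi$ in \eqref{eq:psi-Proinov} is engineered precisely so that $\phi\psi \le 1$ on $[0,R]$. Combining the two lemmas coordinate-wise and taking $\|\cdot\|_p$ yields $E(T(x)) \le \phi(t)\psi(t)\,E(x) \le E(x)$, so by induction $E(x^{(k)}) \le R$ for all $k$ and the iteration is well-defined. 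Because $\phi(t) = O(t^2)$ near $0$, the recursion on $E$ is in fact Q-cubic, which together with monotonicity of $\phi$ and $\psi$ on $[0,R]$ gives, by straightforward induction, $\phi(E(x^{(k)})) \le \lambda^{3^k}$ and $\psi(E(x^{(k)})) \le \theta$ for every $k$; substituting into Step~1 produces the first error estimate and telescoping using $\sum_{j=0}^{k-1} 3^j = (3^k-1)/2$ produces the second. Simplicity of the zeros of $f$ follows because if $\xi_i = \xi_j$ for some $i \ne j$, then the triangle inequality combined with $|x_l^{(0)} - \xi_l| \le E(x^{(0)})\,d_l(x^{(0)}) \le E(x^{(0)})\,|x_i^{(0)} - x_j^{(0)}|$ for $l \in \{i,j\}$ forces $E(x^{(0)}) \ge 1/2$, contradicting the elementary bound $R < 1/2$ valid whenever $a,b \ge 1$. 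Finally, cubic convergence under the strict condition $E(x^{(0)}) < R$ is immediate from $\lambda < 1$.

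The main obstacle I anticipate is the sharp distortion estimate of Step~2, with exactly the factor $\psi$ that saturates $\phi\psi \le 1$ at $t = R$. This requires invoking the Step-1 Hölder bound at two indices simultaneously while keeping the common denominator $(1-t)(1-bt) - at^2$ as a single block rather than piece by piece; it is this delicate balance that lets the theorem operate with the $d(x^{(0)})$-based (hence computationally verifiable) initial condition rather than the $d(\xi)$-based one of Theorem~\ref{thm:Proinov-Ehrlich-local-first}.
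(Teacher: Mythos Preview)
Your overall architecture (one-step coordinate-wise contraction, a spacing-distortion lemma, then a scalar recursion on $E$) matches the paper's route, but two of your three building blocks are misstated and the argument does not assemble as written.

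The distortion inequality in Step~2 goes the wrong way. You claim $d_i(T(x)) \ge d_i(x)/\psi(t)$, which (since $\psi \le 1$) would force the spacings to \emph{increase} after one Ehrlich step. This is false already for $n=2$: take $\xi = (0,1)$ and $x = (-\varepsilon,\,1+\varepsilon)$, so $d_i(x) = 1 + 2\varepsilon$; one step drives $T(x)$ toward $\xi$ and the spacing toward $1 < d_i(x)$. The correct bound, obtained exactly as the paper does via Lemma~\ref{lem:u-v-inequalities-3} with $u = T(x)$, $v = x$, $\alpha = \beta(E(x))$, is
\[
d_i(T(x)) \ \ge\ \psi(t)\, d_i(x), \qquad \psi(t)=1 - b\,t\,(1+\beta(t)) = \frac{(1-t)(1-bt)-at^2}{1-t-at^2},
\]
which allows the spacings to shrink by at most the factor $\psi(t)$. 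Note also that your expansion $T_i - T_j = (x_i-x_j) + (T_i-x_i) - (T_j-x_j)$ bounds $|T_\ell - x_\ell|$, not $|T_\ell - \xi_\ell|$; these are not the quantities controlled by your Step~1.

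Because the distortion only gives a factor $\psi(t)$ (not $1/\psi(t)$), you need the \emph{sharper} contraction in Step~1, namely $|T_i(x)-\xi_i| \le \beta(t)\,|x_i-\xi_i|$ with $\beta(t)=at^2/(1-t-at^2)=\phi(t)\psi(t)$, not merely $\phi(t)$. The $\sigma_i$-estimate for the Ehrlich map with $E$ based on $d(x)$ naturally produces $\beta$ (no $(1-bt)$ factor arises here; the $b=2^{1/q}$ enters only through the distortion lemma). With $\beta$ and the corrected $\psi$-distortion you get $E(T(x)) \le (\beta/\psi)(t)\,E(x) = \phi(t)\,E(x)$, and since $\phi(R)=1$ this yields invariance of $[0,R]$. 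Your claimed recursion $E(T(x)) \le \phi(t)\psi(t)\,E(x)$ is too strong and not obtainable.

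Finally, your derivation of the error estimates has a sign error: $\psi$ is \emph{decreasing} on $[0,R]$, so from $E(x^{(k)}) \le E(x^{(0)})$ you get $\psi(E(x^{(k)})) \ge \theta$, not $\le \theta$. The $\theta$ factor in $\|x^{(k+1)}-\xi\| \preceq \theta\lambda^{3^k}\|x^{(k)}-\xi\|$ comes instead from the quasi-homogeneity of $\beta$ of degree~$2$: writing $\beta(E(x^{(k)})) \le \beta\bigl(\lambda^{(3^k-1)/2}E(x^{(0)})\bigr) \le \lambda^{3^k-1}\beta(E(x^{(0)})) = \lambda^{3^k-1}\cdot\theta\lambda = \theta\lambda^{3^k}$. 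This is exactly the mechanism of Theorem~\ref{thm:general-convergence-theorem} and Lemma~\ref{lem:beta-psi-N-properties} in the paper.
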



\subsection{A family of high-order Ehrlich-type methods}
\label{subsec:High-order-Ehrlich-type-methods}

In the following definition, we define a sequence $(T^{(N)})_{N = 0}^\infty$ of iteration functions in the vector space $\Kset^n$.
In what follows, we define the binary relation $\#$ on $\Kset^n$ by
\begin{equation} \label{eq:special-binary-relation}
	x \, \# \, y  \quad\Leftrightarrow\quad  x_i \ne y_j \, \text{ for all } \,  i,j \in I_n \, \text{ with } \,  i \ne j.
\end{equation}

\begin{definition} \label{def:iteration-functions}
Let ${f \in \Kset[z]}$ be a polynomial of degree ${n \ge 2}$.
Define the sequence $(T^{(N)})_{N=0}^\infty$ of functions ${T^{(N)} \colon D_N \subset \Kset^n \to \Kset^n}$ recursively
by setting ${T^{(0)}(x) = x}$ and
\begin{equation} \label{eq:Ehrlich-high-order-iteration-function}
T_i^{(N + 1)} (x) = x_i - \frac{f(x_i)}{f'(x_i) - f(x_i) \displaystyle\sum_{j \ne i} \frac{1}{x_i - T_j^{(N)}(x)}} \quad (i = 1,\ldots,n),
\end{equation}
where the sequence of domains $(D_N)_{N = 0}^\infty$ is also defined recursively by setting ${D_0 = \Kset^n}$ and
\begin{equation} \label{eq:domain-Ehrlich-high-order}
D_{N + 1}  =  \{ x \in D_N : x \, \# \,T^{(N)}(x), \,
f'(x_i) - f(x_i) \displaystyle \sum_{j \ne i} \frac{1}{x_i - T_j^{(N)}(x)} \ne 0 \text{ for } i \in I_n \}.
\end{equation}
\end{definition}

Given ${N \in \Nset}$, the $N$th method of Kjurkchiev-Andreev's family can be defined by the following fixed-point iteration:
\begin{equation} \label{eq:Ehrlich-high-order}
x^{(k + 1)} = T^{(N)} (x^{(k)}), \qquad k = 0, 1, 2, \ldots.
\end{equation}

It is easy to see that in the case ${N = 1}$ the Ehrlich-type method \eqref{eq:Ehrlich-high-order} coincides
with the classical Ehrlich method \eqref{eq:Ehrlich-method}.
The order of convergence of the Ehrlich-type method \eqref{eq:Ehrlich-high-order} is ${2 N + 1}$.

Kjurkchiev and Andreev \cite{KA87} established the following convergence result for the Ehrlich-type methods \eqref{eq:Ehrlich-high-order}.
This result and its proof can also be found in the monographs of
Sendov, Andreev and Kjurkchiev \cite[Section~19]{SAK94} and Kjurkchiev \cite[Chapter 9.2]{Kyu98}).

\begin{theorem}[Kjurkchiev and Andreev \cite{KA87}] \label{thm:Kjurkchiev-Andreev}
Let ${f \in \Cset[z]}$ be a polynomial of degree ${n \ge 2}$ which has only simple zeros, ${\xi \in \Cset^n}$ be a root-vector of $f$ and ${N \ge 1}$.
Let ${0 < h < 1}$ and ${c > 0}$ be such that
\begin{equation} \label{eq:Kjurkchiev-Andreev-h-c}
\delta > 2 c (1 + (2 n - 1) h) \, \text{ and } \, \frac{n c^2}{(\delta - c) (\delta - 2 c - 2 c h) - 3 (n - 1) c^2 h^2} \le 1 ,
\end{equation}
where ${\delta = \fsep}$. Suppose ${x^{(0)} \in \Cset^n}$ is an initial guess satisfying the condition
\begin{equation} \label{eq:Kjurkchiev-Andreev-initial-condition}
\|x^{(0)} - \xi\|_\infty \le c h.
\end{equation}
Then the Ehrlich-type method \eqref{eq:Ehrlich-high-order} converges to $\xi$ with error estimate
\begin{equation} \label{eq:Kjurkchiev-Andreev-error-estimate}
 \|x^{(k)} - \xi\|_\infty \le c h ^{(2 N + 1)^k} \quad\text{for all } \,  k \ge 0.
\end{equation}
\end{theorem}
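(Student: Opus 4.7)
The plan is to prove the theorem via a nested induction: an outer induction on the iteration index $k$ will establish the error bound \eqref{eq:Kjurkchiev-Andreev-error-estimate}, while an inner induction on the recursion depth $m$ (from $0$ up to $N$) will control the intermediate approximations $T^{(m)}(x)$ that appear in each application of $T^{(N)}$. The first step I would take is to derive a clean closed-form identity for the error of a single Ehrlich-type correction. Since $f(z)=a_0\prod_{j=1}^n(z-\xi_j)$, the logarithmic derivative gives $f'(x_i)/f(x_i)=\sum_{j}1/(x_i-\xi_j)$ whenever $f(x_i)\ne 0$. Substituting this into the correction term in \eqref{eq:Ehrlich-high-order-iteration-function} and combining fractions yields
\[
T_i^{(N+1)}(x)-\xi_i=\frac{(x_i-\xi_i)^2\,S_i(x)}{1+(x_i-\xi_i)\,S_i(x)},\qquad S_i(x)=\sum_{j\ne i}\frac{T_j^{(N)}(x)-\xi_j}{(x_i-\xi_j)(x_i-T_j^{(N)}(x))}.
\]
This identity is the heart of the argument: the error at recursion level $N+1$ appears as $(x_i-\xi_i)^2$ times a sum controlled by the errors $T_j^{(N)}(x)-\xi_j$ at the preceding level.

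For the inner induction, I would fix $x\in\Cset^n$ with $\|x-\xi\|_\infty\le c\eta$ for some $\eta\in(0,h]$ and prove by induction on $m=0,1,\ldots,N$ that $x\in D_m$ and
\[
\|T^{(m)}(x)-\xi\|_\infty\le c\eta^{2m+1}.
\]
The base $m=0$ is immediate since $T^{(0)}=\mathrm{id}$. For the inductive step I would feed the hypothesis into the identity of the previous paragraph (with $N$ replaced by $m$). To bound the denominators in $S_i(x)$, I would use $|\xi_i-\xi_j|\ge\delta$ together with the reverse triangle inequality and the inner hypothesis $\|T^{(m)}(x)-\xi\|_\infty\le c\eta^{2m+1}\le ch$, obtaining $|x_i-\xi_j|\ge\delta-c$ and $|x_i-T_j^{(m)}(x)|\ge\delta-2c-2ch$. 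These are exactly the quantities that appear in the denominator of \eqref{eq:Kjurkchiev-Andreev-h-c}. A short estimate on $|S_i(x)|$ followed by a geometric-series bound on $|1+(x_i-\xi_i)S_i(x)|^{-1}$ would simultaneously certify that $x\in D_{m+1}$ and produce $|T_i^{(m+1)}(x)-\xi_i|\le c\eta^{2m+3}$, closing the inner induction.

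The outer induction then runs smoothly: I apply the inner claim with $\eta_k:=h^{(2N+1)^k}$, noting that $\eta_k\le h$ is preserved because $h<1$. The base case $k=0$ is exactly \eqref{eq:Kjurkchiev-Andreev-initial-condition}, and assuming $\|x^{(k)}-\xi\|_\infty\le c\eta_k$, the inner induction with $m=N$ delivers $\|x^{(k+1)}-\xi\|_\infty=\|T^{(N)}(x^{(k)})-\xi\|_\infty\le c\eta_k^{2N+1}=ch^{(2N+1)^{k+1}}$, which is \eqref{eq:Kjurkchiev-Andreev-error-estimate}. The main obstacle I anticipate is the bookkeeping in the inner inductive step: one must extract from the single condition \eqref{eq:Kjurkchiev-Andreev-h-c} simultaneously (i) the non-vanishing of $1+(x_i-\xi_i)S_i(x)$ for every recursion level $m$, and (ii) a sharp enough upper bound on $|T_i^{(m+1)}(x)-\xi_i|$ to preserve the exponent $2m+3$. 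The particular form of the denominator $(\delta-c)(\delta-2c-2ch)-3(n-1)c^2h^2$ in \eqref{eq:Kjurkchiev-Andreev-h-c} is engineered precisely so that this bookkeeping closes uniformly in $m$, and carefully verifying this absorption is the technical crux of the proof.
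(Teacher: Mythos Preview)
Your approach is essentially correct and will go through, but it follows a genuinely different route from the one taken in the paper. A minor slip: with your definition of $S_i(x)$ the denominator in the error identity should be $1-(x_i-\xi_i)S_i(x)$ rather than $1+(x_i-\xi_i)S_i(x)$ (compare Lemma~\ref{lem:Ehrlich-high-order-map}, where $\sigma_i=(x_i-\xi_i)S_i(x)$); this is harmless for the modulus estimates since either way $|1\pm z|\ge 1-|z|$.

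The paper does \emph{not} prove Theorem~\ref{thm:Kjurkchiev-Andreev} by the direct double induction you outline. Instead it first builds an abstract iterated-contraction framework (Theorem~\ref{thm:general-convergence-theorem} and Corollary~\ref{cor:general-convergence-theorem}), uses it to obtain the sharper local convergence Theorem~\ref{thm:first-local-Ehrlich} and its reformulation Corollary~\ref{cor:first-local-Ehrlich-var}, and then, in the Remark following Corollary~\ref{cor:first-local-Ehrlich-var}, shows by a chain of elementary inequalities that the Kjurkchiev--Andreev hypotheses \eqref{eq:Kjurkchiev-Andreev-h-c}--\eqref{eq:Kjurkchiev-Andreev-initial-condition} force the single initial condition \eqref{eq:first-local-Ehrlich-var-initial-condition} with $p=\infty$; the estimate \eqref{eq:Kjurkchiev-Andreev-error-estimate} then drops out of the second estimate in \eqref{eq:first-local-Ehrlich-var-error-estimate}. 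Your argument is more elementary and self-contained---it is presumably close in spirit to the original proof in \cite{KA87}---whereas the paper's route, though heavier in machinery, simultaneously yields the strict improvement (a larger convergence domain and $p$-norm generality) and makes transparent \emph{why} the Kjurkchiev--Andreev conditions are unnecessarily restrictive.
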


\subsection{The purpose of the paper}
\label{subsec:The-purpose-of-the-paper}

In this paper, we present two new local convergence theorems as well as a semilocal convergence theorem (under computationally verifiable initial conditions and with a posteriori error estimate) for Ehrlich-type methods \eqref{eq:Ehrlich-high-order}.
Our first local convergence result (Theorem~\ref{thm:first-local-Ehrlich}) generalizes Theorem~\ref{thm:Proinov-Ehrlich-local-first}
(Proinov \cite{Pro15c}) and improves Theorem~\ref{thm:Kjurkchiev-Andreev} (Kjurkchiev and Andreev \cite{KA87}).
Our second local convergence result (Theorem~\ref{thm:second-local-Ehrlich}) generalizes Theorem~\ref{thm:Proinov-Ehrlich-local-second}
(Proinov \cite{Pro15c}), but only in the case ${p = \infty}$.
Furthermore, several numerical examples are provided to show some practical applications of our semilocal convergence result.


\section{A general convergence theorem}
\label{sec:A-general-convergence-theorem}

Recently, Proinov \cite{Pro09,Pro10,Pro15a} has developed a general convergence theory for iterative processes of the type
\begin{equation} \label{eq:Picard-sequence}
    x_{k+1} = Tx_k, \qquad k = 0,1,2,\ldots,
\end{equation}
where ${T \colon D \subset X \to X}$ is an iteration function in a cone metric space $X$. 
In order to make this paper self-contained, we briefly review some basic definitions and results from this theory.

Throughout this paper $J$ denotes an interval on ${\Rset_+}$ containing $0$.
For an integer ${k \ge 1}$, we denote by ${S_k(t)}$ the following polynomial: 
\[
{S_k(t) = 1 + t + \ldots + t^{k-1}}.
\]
If ${k = 0}$ we assume that ${S_k(t) \equiv 0}$. Throughout the paper we assume by definition that ${0^0 = 1}$.

\begin{definition}[\cite{Pro10}] \label{def:quasi-homogeneous-function}
A function ${\varphi \colon J \to \Rset_+}$ is called \emph{quasi-homogeneous} of degree ${r \ge 0}$ on $J$
if it satisfies the following condition:
\begin{equation} \label{eq:QHF1}
\varphi(\lambda t) \le \lambda^r \varphi(t)
\quad\text{for all } \lambda  \in [0,1] \text{ and } t  \in J.
\end{equation}
\end{definition}

If $m$ functions ${\varphi_1,\ldots,\varphi_m}$ are quasi-homogeneous on $J$ of degree ${r_1,\ldots,r_m}$, 
then their product ${\varphi_1 \ldots \varphi_m}$ is a quasi-homogeneous function of degree ${r_1 + \ldots + r_m}$ on $J$. 
Note also that a function $\varphi$ is quasi-homogeneous of degree $0$ on $J$ if and only it is nondecreasing on $J$.
  
\begin{definition}[\cite{Pro09}] \label{def:gauge-function-high-order}
A function ${\varphi \colon J \to J}$ is said to be a
\emph{gauge function of order} ${r \ge 1}$ on $J$ if it satisfies the following conditions:
\begin{enumerate}
  \item $\varphi$ is quasi-homogeneous of degree $r$ on $J$;
  \item ${\varphi(t) \le t}$ \quad for all ${t \in J}$.
\end{enumerate}
A gauge function $\varphi$ of order $r$ on $J$ is said to be a \emph{strict gauge function} if
the inequality in (ii) holds strictly whenever ${t \in J \backslash \{ 0 \}}$.
\end{definition}

The following is a sufficient condition for a gauge function of order $r$.

\begin{lemma}[\cite{Pro10}] \label{lem:gauge-function-condition}
If ${\varphi \colon J \to \Rset_+}$ is a quasi-homogeneous function of degree ${r \ge 1}$ on an interval $J$ and ${R > 0}$ is a fixed point of $\varphi$ in $J$, then $\varphi$ is a gauge function of order $r$ on ${[0, R]}$. Moreover, if ${r > 1}$, then  function $\varphi$ is a strict gauge of order $r$ on ${J = [0, R)}$.
\end{lemma}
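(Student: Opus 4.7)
The plan is to reduce everything to the defining inequality \eqref{eq:QHF1} of quasi-homogeneity by writing an arbitrary point of $[0, R]$ as a convex multiple of the fixed point $R$. Since $J$ is an interval containing both $0$ and the fixed point $R$, we automatically have $[0, R] \subseteq J$, so the restriction of $\varphi$ to $[0, R]$ makes sense and remains quasi-homogeneous of the same degree $r$. Hence condition (i) of Definition~2.2 is free, and the only substantive task is to verify the contraction inequality (ii), together with the fact that $\varphi$ maps $[0, R]$ into itself (so that the codomain restriction in the definition of a gauge function is legitimate).

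Given $t \in [0, R]$, I would set $\lambda = t / R$, so that $\lambda \in [0, 1]$ and $t = \lambda R$. Applying the quasi-homogeneity inequality with this particular $\lambda$ and then using the fixed point relation $\varphi(R) = R$ yields
\[
\varphi(t) \;=\; \varphi(\lambda R) \;\le\; \lambda^r \varphi(R) \;=\; \lambda^r R \;=\; \lambda^{r-1} t.
\]
Since $r \ge 1$ and $\lambda \in [0, 1]$, the factor $\lambda^{r-1}$ lies in $[0, 1]$ (using the convention $0^0 = 1$ when $r = 1$), and therefore $\varphi(t) \le t \le R$. This establishes simultaneously that $\varphi$ sends $[0, R]$ into $[0, R]$ and that condition (ii) of Definition~2.2 holds, completing the proof that $\varphi$ is a gauge function of order $r$ on $[0, R]$.

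For the second assertion, I would repeat the same computation on $[0, R)$ under the strengthened hypothesis $r > 1$. For any $t \in (0, R)$ one has $\lambda = t/R \in (0, 1)$, so now $\lambda^{r-1} < 1$ strictly, and the chain above gives $\varphi(t) \le \lambda^{r-1} t < t$. At $t = 0$ quasi-homogeneity (with $\lambda = 0$ and $r \ge 1$) forces $\varphi(0) = 0$, and in particular $\varphi(0) < R$, so $\varphi$ still maps $[0, R)$ into $[0, R)$. Combined with the quasi-homogeneity inherited from $J$, this shows that $\varphi$ is a strict gauge function of order $r$ on $[0, R)$.

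There is no real obstacle here: the argument is a one-line application of the quasi-homogeneity inequality with the specific choice $\lambda = t/R$, and the only point requiring care is keeping track of where the inequality $\lambda^{r-1} \le 1$ is strict. The proof is elementary enough that I would present it in a single short paragraph rather than breaking it into steps.
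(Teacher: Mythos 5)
Your proof is correct: writing $t=\lambda R$ with $\lambda=t/R\in[0,1]$ and using $\varphi(\lambda R)\le\lambda^{r}\varphi(R)=\lambda^{r}R\le\lambda R=t$ (strictly when $r>1$ and $\lambda\in(0,1)$), together with $\varphi(0)=0$ and the inherited quasi-homogeneity on $[0,R]\subseteq J$, is exactly the standard argument. The paper itself states this lemma with a citation to \cite{Pro10} and gives no proof, and your reasoning matches the original one-line proof there, so nothing further is needed.
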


\begin{definition}[\cite{Pro09}] \label{def:function-of-initial-conditions}
Let ${T \colon D \subset X  \to X}$ be a map on an arbitrary set $X$. A function ${E \colon D \to \Rset_+}$ is said to be a \emph{function of initial conditions} of $T$ (with a gauge function $\varphi$ on $J$) if there exist a function ${\varphi \colon J \to J}$ such that
\begin{equation} \label{eq:function-of-initial-conditions}
    E(Tx) \le \varphi(E(x)) \quad \text{ for all } x \in D  \text{ with } Tx \in D \text{ and } E(x) \in J.
\end{equation}
\end{definition}

\begin{definition}[\cite{Pro09}] \label{def:initial-points}
Let ${T \colon D \subset X \to X}$ be a map on an arbitrary set $X$, and let ${E \colon D \to \Rset_+}$ be a function of initial conditions of $T$ with a gauge function on $J$. Then a point ${x \in D}$ is said to be an \emph{initial point} of $T$ (with respect to $E$) if ${E(x) \in J}$ and all of the iterates ${T^{k}x}$  ${(k = 0,1,2,\ldots)}$ are well-defined and belong to $D$.
\end{definition}

The following is a simple sufficient condition for initial points.

\begin{theorem}[{\cite{Pro10}}]  \label{thm:initial-point-test}
Let ${T \colon D \subset X  \to X}$ be a map on an arbitrary set $X$ and ${E \colon D \to \Rset_+}$ be a function of
initial conditions of $T$ with a gauge function $\varphi$ on $J$.
Suppose that
${x \in D}$ with ${E(x) \in J}$ implies ${Tx \in D}$.
Then every point ${x_0 \in D}$ such that ${E(x_0) \in J}$ is an initial points of $T$.
\end{theorem}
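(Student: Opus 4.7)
The plan is to prove the claim by a straightforward induction on $k$, simultaneously establishing two invariants: that each iterate $T^k x_0$ is well-defined and lies in $D$, and that each value $E(T^k x_0)$ lies in $J$. Both invariants are needed because they feed into each other: the first invariant is required to apply $E$ and to perform the next iteration, while the second invariant is required to invoke both the hypothesis ``$x \in D$ with $E(x) \in J$ implies $Tx \in D$'' and the defining inequality \eqref{eq:function-of-initial-conditions} of a function of initial conditions.

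For the base case $k=0$, I would simply note that $T^0 x_0 = x_0 \in D$ with $E(x_0) \in J$ by the hypothesis on $x_0$. For the inductive step, I would assume $T^k x_0 \in D$ and $E(T^k x_0) \in J$. The hypothesis that $x \in D$ with $E(x) \in J$ implies $Tx \in D$ then gives immediately that $T^{k+1} x_0 = T(T^k x_0) \in D$, which handles the first invariant.

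For the second invariant, I would apply Definition~\ref{def:function-of-initial-conditions} with $x = T^k x_0$ to obtain
\[
E(T^{k+1} x_0) \le \varphi(E(T^k x_0)).
\]
Since $\varphi \colon J \to J$ and $E(T^k x_0) \in J$, the right-hand side lies in $J$. Because $J$ is an interval on $\Rset_+$ containing $0$ and $E(T^{k+1} x_0) \ge 0$ is bounded above by an element of $J$, it follows that $E(T^{k+1} x_0) \in J$, completing the induction.

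There is no real obstacle to overcome here; the only small point worth being careful about is the last step, where one uses that $J$ is an interval containing $0$ to conclude that any nonnegative number dominated by an element of $J$ is itself in $J$. One could equivalently combine the gauge property $\varphi(t) \le t$ with the inequality above to get the monotone bound $E(T^{k+1} x_0) \le E(T^k x_0)$, which then shows that the sequence $\{E(T^k x_0)\}$ stays in $[0, E(x_0)] \subseteq J$.
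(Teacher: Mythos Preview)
Your proof is correct. The paper does not include its own proof of this statement; it is simply quoted from \cite{Pro10} as an auxiliary result, so there is nothing to compare against. Your induction on $k$, maintaining the two invariants $T^k x_0 \in D$ and $E(T^k x_0) \in J$, is exactly the natural argument, and your handling of the one subtle point---using that $J$ is an interval in $\Rset_+$ containing $0$ so that $0 \le E(T^{k+1} x_0) \le \varphi(E(T^k x_0)) \in J$ forces $E(T^{k+1} x_0) \in J$---is correct.
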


\begin{definition}[\cite{Pro15a}] \label{def:iterated-contraction-map}
Let ${T \colon D \subset X \to X}$ be an operator in a cone normed space ${(X,\|\cdot\|)}$ over a solid
vector space ${(Y,\preceq)}$, and let ${E \colon D \to \Rset_+}$ be a function of initial conditions of $T$ with a gauge
function on an interval $J$. Then the operator $T$ is said to be \emph{an iterated contraction at a point} ${\xi \in D}$ 
(with respect to $E$) if $E{(\xi) \in J}$ and
\begin{equation} \label{eq:iterated-contraction-map}
\|Tx - \xi \| \preceq \beta(E(x)) \| x - \xi\| \quad\text{for all } x \in D  \text{ with } E(x) \in J,
\end{equation}
where the control function ${\beta \colon J \to [0,1)}$ is nondecreasing.
\end{definition}

The following fixed point theorem plays an important role in our paper. 

\begin{theorem} [Proinov \cite{Pro15a}] \label{thm:general-convergence-theorem}
Let ${T \colon D \subset X \to X}$ be an operator of a cone normed space ${(X,\|\cdot\|)}$ over a solid vector space ${(Y,\preceq)}$, 
and let ${E \colon D \to \Rset_+}$ be a function of initial conditions of $T$ with a gauge function $\varphi$ of order ${r \ge 1}$ on an interval $J$.
Suppose $T$ is an iterated contraction at a point $\xi$ with respect to $E$ 
with control function $\beta$ such that
\begin{equation} \label{eq:Proinov-beta-gauge-property}
  t \, \beta(t) \text{ is a strict gauge function of order } r \text{ on } J
\end{equation}
and there exist a function ${\psi \colon J \to \Rset_+}$ such that
\begin{equation} \label{eq:Proinov-beta-property}
\beta(t) = \phi(t) \psi(t) \quad\text{for all }t \in J,
\end{equation}
where ${\phi \colon J \to \Rset_+}$ is a nondecreasing function satisfying
\begin{equation} \label{eq:Proinov-phi-property}
\varphi(t) = t \, \phi(t) \quad\text{for all } t  \in J.
\end{equation}
Then the following statements hold true.
\begin{enumerate}
  \item The point $\xi$ is a unique fixed point of $T$ in the set ${U = \left\{ x \in D : E(x) \in J \right\}}$.
  \item Starting from each initial point ${x^{(0)}}$ of $T$, Picard iteration \eqref{eq:Picard-sequence} remains in the set $U$ and converges to $\xi$ with error estimates
\begin{equation} \label{eq:general-con-theorem-error-estimates}
\|x^{(k+1)} - \xi\| \preceq \theta \lambda^{r^k} \, \|x^{(k)} - \xi\|
\quad\text{and}\quad
 \|x^{(k)} - \xi\| \preceq \theta^{k} \lambda^{S_k(r)} \, \|x^{(0)} - \xi\|
\end{equation}
for all ${k \ge 0}$, where ${\lambda = \phi(E(x^{(0)}))}$ and ${\theta = \psi(E(x^{(0)}))}$.
\end{enumerate}
\end{theorem}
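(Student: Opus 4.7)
The plan is to settle the fixed-point and uniqueness parts immediately from the iterated contraction inequality, then drive convergence via a scalar recursion on $E_k := E(x^{(k)})$ that feeds back into the cone-valued inequality \eqref{eq:iterated-contraction-map}.

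Fixed point and uniqueness are nearly free. Applying \eqref{eq:iterated-contraction-map} at $x = \xi$ gives $\|T\xi - \xi\| \preceq \beta(E(\xi)) \cdot 0 = 0$, so $T\xi = \xi$. For any other fixed point $\eta \in U$, the same inequality at $x = \eta$ combined with $\beta(E(\eta)) < 1$ forces $\|\eta - \xi\| \preceq 0$, hence $\eta = \xi$.

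The quantitative core is a careful use of two parallel quasi-homogeneity chains. The factorization $\varphi(t) = t\phi(t)$ together with the assumption that $\varphi$ is a gauge of order $r$ on $J$ forces $\phi$ to be quasi-homogeneous of degree $r-1$, i.e.\ $\phi(\mu t) \le \mu^{r-1}\phi(t)$ for $\mu \in [0,1]$. Analogously, the hypothesis that $t\beta(t)$ is a strict gauge of order $r$ forces $\beta$ itself to be quasi-homogeneous of degree $r-1$. From \eqref{eq:function-of-initial-conditions}, $E_{k+1} \le \varphi(E_k) = E_k\phi(E_k)$; an induction on $k$, using quasi-homogeneity of $\phi$ and the arithmetic identity $(r-1)S_k(r) + 1 = r^k$, then yields
\[
  E_k \le \lambda^{S_k(r)}\, E_0, \qquad \lambda = \phi(E_0),
\]
which simultaneously shows that $E_k \in J$ for every $k$, so the Picard iterates are automatically well-defined. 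Applying the quasi-homogeneity of $\beta$ to this bound gives
\[
  \beta(E_k) \le \lambda^{(r-1)S_k(r)}\,\beta(E_0) = \lambda^{r^k-1}\cdot\lambda\theta = \theta\lambda^{r^k},
\]
where $\theta = \psi(E_0)$ enters through $\beta(E_0) = \phi(E_0)\psi(E_0) = \lambda\theta$. Substituting into \eqref{eq:iterated-contraction-map} gives the first estimate of \eqref{eq:general-con-theorem-error-estimates}, and a telescoping product yields the second. Convergence then follows automatically: strictness of the gauge $t\beta(t)$ on $J$ gives $\lambda\theta = \beta(E_0) < 1$ and $\lambda < 1$, so the right-hand side tends to zero.

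The main subtlety I anticipate is resisting the temptation to propagate bounds through $\phi$ and $\psi$ separately; since $\psi$ is not required to be monotone in \eqref{eq:Proinov-beta-property}, this line of attack does not close. The correct move is to do the bookkeeping at the level of $\beta$ itself, where the quasi-homogeneity coming from the strict-gauge hypothesis on $t\beta(t)$ does all the work, and to use the decomposition $\beta = \phi\psi$ only at the very end to identify the two constants $\lambda$ and $\theta$ appearing in \eqref{eq:general-con-theorem-error-estimates}.
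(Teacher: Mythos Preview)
The paper does not contain a proof of this theorem; it is quoted verbatim from Proinov~\cite{Pro15a} as a black-box tool and then applied in the proofs of Theorems~\ref{thm:first-local-Ehrlich} and~\ref{thm:second-local-Ehrlich}. So there is no in-paper argument to compare against.

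That said, your argument is essentially sound and is the natural line of proof. A couple of minor remarks. First, well-definedness of the iterates is not something you prove here: it is part of the hypothesis, since the theorem quantifies over \emph{initial points} of $T$ (Definition~\ref{def:initial-points}), which by definition have all iterates in $D$. What your scalar recursion $E_k \le \lambda^{S_k(r)} E_0$ does establish is $E_k \in J$, i.e.\ that the orbit stays in $U$, which is part of the conclusion. Second, the claim ``$\lambda < 1$'' is slightly overstated: from $\varphi(t) \le t$ you only get $\phi(t) \le 1$, hence $\lambda \le 1$. This is harmless, because convergence follows from $\lambda \le 1$ together with $\lambda\theta = \beta(E_0) < 1$ via the rewriting $\theta^k \lambda^{S_k(r)} = (\lambda\theta)^k \lambda^{S_k(r)-k}$ and $S_k(r) \ge k$. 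Finally, the deduction that $\phi$ (and likewise $\beta$) is quasi-homogeneous of degree $r-1$ from the gauge property of $t\phi(t)$ is valid for $t > 0$; the boundary case $t = 0$ requires no separate treatment in your induction since you only ever evaluate at $\mu E_0$ with $E_0$ fixed.

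Your closing remark about working with $\beta$ rather than $\phi$ and $\psi$ separately is exactly right and is the key organizing idea.
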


In the case ${\beta \equiv \phi}$, Theorem~\ref{thm:general-convergence-theorem} reduces to the following
result.

\begin{corollary} [\cite{Pro15a}] \label{cor:general-convergence-theorem}
Let ${T \colon D \subset X \to X}$ be an operator in a cone normed space ${(X,\|\cdot\|)}$ over a solid
vector space ${(Y,\preceq)}$, and let ${E \colon D \to \Rset_+}$ be a function of initial conditions of $T$ with a strict gauge
function $\varphi$ of order ${r \ge 1}$ on an interval $J$.
Suppose that $T$ is an iterated contraction at a point $\xi$ with respect to $E$ and
with control function $\phi$ satisfying \eqref{eq:Proinov-phi-property}.
Then the following statements hold true.
\begin{enumerate}
  \item The point $\xi$ is a unique fixed point of $T$ in the set ${U = \left\{ x \in D : E(x) \in J \right\}}$.
  \item Starting from each initial point ${x^{(0)}}$ of $T$, Picard iteration \eqref{eq:Picard-sequence} remains in $U$ 
	and converges to $\xi$ with order $r$ and error estimates
\begin{equation} \label{eq:general-con-theorem-error-estimates-phi}
\|x^{(k+1)} - \xi\| \preceq \lambda^{r^k} \, \|x^{(k)} - \xi\|
\quad\text{and}\quad
 \|x^{(k)} - \xi\| \preceq  \lambda^{S_k(r)} \, \|x^{(0)} - \xi\|,
\end{equation}
for all ${k \ge 0}$, where ${\lambda = \phi(E(x^{(0)}))}$.
\end{enumerate}
\end{corollary}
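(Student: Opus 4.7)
The plan is to derive this corollary as an immediate specialization of Theorem~\ref{thm:general-convergence-theorem}, by taking $\psi \equiv 1$. Under this choice, the factorization $\beta(t) = \phi(t)\psi(t)$ required in \eqref{eq:Proinov-beta-property} collapses to $\beta(t) = \phi(t)$, so the control function $\phi$ of the corollary plays simultaneously the roles of $\beta$ and $\phi$ in the ambient theorem.

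To invoke Theorem~\ref{thm:general-convergence-theorem}, I would verify each of its structural hypotheses. Condition \eqref{eq:Proinov-beta-gauge-property}, that $t\beta(t)$ is a strict gauge function of order $r$ on $J$, becomes the assertion that $\varphi(t) = t\phi(t)$ is a strict gauge of order $r$, which is exactly the hypothesis of the corollary. Condition \eqref{eq:Proinov-beta-property} is trivial with $\psi \equiv 1$. Condition \eqref{eq:Proinov-phi-property} is the defining identity $\varphi(t) = t\phi(t)$ itself. The iterated contraction property of $T$ at $\xi$, together with the requirement that $\phi$ be nondecreasing with values in $[0,1)$, is already built into Definition~\ref{def:iterated-contraction-map} via the hypothesis that $T$ is an iterated contraction with control function $\phi$.

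With all hypotheses satisfied, Theorem~\ref{thm:general-convergence-theorem} yields the uniqueness of $\xi$ as a fixed point of $T$ in $U$ and the error estimates \eqref{eq:general-con-theorem-error-estimates}. Substituting $\theta = \psi(E(x^{(0)})) = 1$ reduces these bounds to \eqref{eq:general-con-theorem-error-estimates-phi}. The order-$r$ convergence assertion then follows from the first bound: since $\varphi$ is a \emph{strict} gauge, $\lambda = \phi(E(x^{(0)})) = \varphi(E(x^{(0)}))/E(x^{(0)}) < 1$ whenever $E(x^{(0)}) > 0$, so the estimate $\|x^{(k+1)} - \xi\| \preceq \lambda^{r^k}\|x^{(k)} - \xi\|$ gives convergence of order exactly $r$.

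There is no genuine obstacle here; the corollary is a direct bookkeeping specialization of Theorem~\ref{thm:general-convergence-theorem}. The only minor point worth noting is the value of $\phi$ at $t = 0$, which plays no role since either $E(x^{(0)}) > 0$ (in which case $\lambda$ is defined via the identity $\phi(t) = \varphi(t)/t$), or $x^{(0)} = \xi$ and the conclusion is vacuous.
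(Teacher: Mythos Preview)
Your proposal is correct and matches the paper's approach exactly: the paper states just before the corollary that ``in the case $\beta \equiv \phi$, Theorem~\ref{thm:general-convergence-theorem} reduces to the following result,'' which is precisely your specialization $\psi \equiv 1$. Your verification of the hypotheses is more detailed than the paper's one-line remark, but the argument is the same.
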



\section{Some inequalities in $\Kset^n$}
\label{sec:Some-inequalities}

In this section, we present some useful inequalities in $\Kset^n$ which play an important role in the paper.

\begin{lemma} [\cite{PC14}] \label{lem:u-v-inequalities-basic}
Let ${u,v \in \Kset^n}$, $v$  be a vector with distinct components and ${1 \le p \le \infty}$.
Then for all ${i,j \in I_n}$,
\begin{equation} \label{eq:u-v-inequalities-basic-1}
    |u_i - v_j| \ge \left(1 - \left\| \frac{u - v}{d(v)}\right\|_p \right) |v_i - v_j|,
\end{equation}
\begin{equation} \label{eq:u-v-inequalities-basic-2}
|u_i - u_j| \ge \left(1 - \displaystyle  2^ {1/q} \left \| \frac{u - v}{d(v)} \right\|_p \right) |v_i - v_j|.	
\end{equation}
\end{lemma}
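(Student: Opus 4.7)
My plan is to derive both inequalities from the reverse triangle inequality combined with two elementary facts: (a) for every $i,j$ with $i\neq j$, we have $d_i(v) \le |v_i-v_j|$ by the definition of $d_i$, and (b) for any $p\in[1,\infty]$ each coordinate of a vector in $\Rset^n$ is bounded by its $\ell_p$-norm. Throughout, I will write $E = \|(u-v)/d(v)\|_p$ for brevity and denote its $i$th coordinate by $e_i = |u_i-v_i|/d_i(v)$.

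For the first inequality, I would start from the reverse triangle inequality
\[
|u_i - v_j| \ge |v_i - v_j| - |v_i - u_i|
\]
and then rewrite $|v_i-u_i| = e_i\, d_i(v) \le e_i\, |v_i-v_j|$ using (a). Since $e_i \le E$ by (b), factoring $|v_i-v_j|$ yields inequality \eqref{eq:u-v-inequalities-basic-1}. This case is straightforward and serves mainly to fix notation.

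For the second inequality, which is the main technical point, I would again apply the triangle inequality, this time picking up two error terms:
\[
|u_i - u_j| \ge |v_i - v_j| - |v_i - u_i| - |v_j - u_j|.
\]
Using (a) on both $d_i(v)$ and $d_j(v)$, each of the subtracted terms is bounded by $e_i|v_i-v_j|$ and $e_j|v_i-v_j|$ respectively, so it suffices to show $e_i + e_j \le 2^{1/q} E$. This is precisely where the exponent $2^{1/q}$ enters, and it is the step I expect to be the main obstacle: it is an instance of the norm-equivalence inequality $\|w\|_1 \le m^{1/q}\|w\|_p$ applied to the two-component vector $w=(e_i,e_j)\in\Rset^2$, which follows from H\"older's inequality $e_i+e_j \le 2^{1/q}(e_i^p+e_j^p)^{1/p}$ (with the usual conventions for the endpoints $p=1$ and $p=\infty$). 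Bounding $(e_i^p+e_j^p)^{1/p}\le E$ by monotonicity of the $\ell_p$-norm in its number of summands then finishes the estimate.

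Combining these pieces gives \eqref{eq:u-v-inequalities-basic-2}. The only care needed is a brief check of the boundary cases $p=1$ (so $q=\infty$, $2^{1/q}=1$, and the bound reduces to $e_i+e_j \le \|(u-v)/d(v)\|_1$, which is immediate) and $p=\infty$ (so $q=1$, $2^{1/q}=2$, and the bound becomes $e_i+e_j\le 2\max_k e_k$). No additional hypothesis beyond the distinctness of the components of $v$ is required, and the distinctness is exactly what guarantees $d_i(v)>0$ so that the quantities $e_i$ are well-defined.
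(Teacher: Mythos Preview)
Your proof is correct. The paper does not supply its own proof of this lemma; it is simply quoted from \cite{PC14}, so there is nothing to compare against in the present paper. That said, your argument is the standard one and almost certainly coincides with what appears in \cite{PC14}: the reverse triangle inequality together with $d_i(v)\le |v_i-v_j|$ (and $d_j(v)\le |v_i-v_j|$) reduces both inequalities to bounding a single coordinate $e_i$, respectively a two-term sum $e_i+e_j$, by the full $\ell_p$-norm $E$, and the factor $2^{1/q}$ arises exactly from H\"older applied to the vector $(e_i,e_j)$. Your treatment of the endpoint cases $p=1$ and $p=\infty$ is also correct, and the case $i=j$ is trivial since both sides vanish.
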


\begin{lemma} [\cite{Pro15a}] \label{lem:u-v-inequality-distinct-components}
Let ${u,v \in \Kset^n}$ and ${1 \le p \le \infty}$. If the vector
$v$ has distinct components  and
\[
\left\| \frac{u - v}{d(v)}\right\|_p < \frac{1}{2}
\]
then the vector $u$ also has distinct components.
\end{lemma}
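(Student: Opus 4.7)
The plan is to derive the distinctness of the components of $u$ as an immediate consequence of inequality \eqref{eq:u-v-inequalities-basic-2} from Lemma~\ref{lem:u-v-inequalities-basic}. Fix any pair of indices $i,j \in I_n$ with $i \ne j$. That inequality reads
\[
|u_i - u_j| \ge \left(1 - 2^{1/q}\left\|\frac{u-v}{d(v)}\right\|_p\right)|v_i - v_j|,
\]
so it suffices to show that the factor in parentheses is strictly positive and to invoke the hypothesis that $v$ has distinct components.

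For the first task, I would observe that the conjugate exponent $q$ of any $p \in [1,\infty]$ lies in $[1,\infty]$ as well, so $1/q \in [0,1]$ and therefore $2^{1/q} \le 2$, the extreme value $2^{1/q} = 2$ being attained exactly when $p = \infty$. Combined with the hypothesis $\|(u-v)/d(v)\|_p < 1/2$, this yields
\[
2^{1/q}\left\|\frac{u-v}{d(v)}\right\|_p < 2 \cdot \tfrac{1}{2} = 1,
\]
so the coefficient $1 - 2^{1/q}\|(u-v)/d(v)\|_p$ is strictly positive.

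For the second task, since $v$ has distinct components we have $|v_i - v_j| > 0$ whenever $i \ne j$. Plugging both observations into the displayed inequality gives $|u_i - u_j| > 0$, i.e. $u_i \ne u_j$. As $i \ne j$ was arbitrary, $u$ has distinct components.

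There is essentially no obstacle to overcome: the lemma is a one-line corollary of inequality \eqref{eq:u-v-inequalities-basic-2}. The only mildly delicate point is the uniform bound $2^{1/q} \le 2$, which handles the worst case $p = \infty$ (where $2^{1/q} = 2$) and explains why the hypothesis is phrased with the strict inequality $\|(u-v)/d(v)\|_p < 1/2$ rather than with $\le$.
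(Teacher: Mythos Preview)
Your argument is correct. The paper itself does not prove this lemma; it is quoted from \cite{Pro15a} without proof. Your derivation from inequality~\eqref{eq:u-v-inequalities-basic-2} of Lemma~\ref{lem:u-v-inequalities-basic}, together with the observation $2^{1/q}\le 2$, is the natural route and is entirely sound.
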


\begin{lemma} [\cite{PV15}]  \label{lem:u-v-inequalities-1}
Let ${u,v,\xi \in \Kset^n}$, $\xi$ be a vector with distinct components, ${1 \le p \le \infty}$ and
\begin{equation} \label{eq:inequality-vectors-cone-norm}
    \|v - \xi \| \preceq  \|u - \xi\|.
\end{equation}
Then for all ${i,j \in I_n}$,
\begin{equation} \label{eq:u-v-inequalities-1}
    |u_i - v_j| \ge \left( 1 - 2^ {1/q} \left\| \frac{u - \xi}{d(\xi)} \right\|_p \right) |\xi_i - \xi_j|.
\end{equation}
\end{lemma}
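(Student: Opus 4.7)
The plan is to reduce the claim to the triangle inequality plus a careful bookkeeping of the $p$-norm using the hypothesis $\|v-\xi\|\preceq\|u-\xi\|$. First I would dispose of the trivial case $i=j$: the right-hand side is $0$ and $|u_i-v_j|\ge 0$, so there is nothing to prove. From now on assume $i\ne j$.

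For $i\ne j$, write $u_i-v_j=(\xi_i-\xi_j)-(\xi_i-u_i)-(v_j-\xi_j)$ and apply the triangle inequality to obtain
\[
|u_i-v_j|\ge |\xi_i-\xi_j|-|u_i-\xi_i|-|v_j-\xi_j|.
\]
By the coordinate-wise hypothesis $\|v-\xi\|\preceq\|u-\xi\|$, we have $|v_j-\xi_j|\le|u_j-\xi_j|$, so
\[
|u_i-v_j|\ge |\xi_i-\xi_j|-\bigl(|u_i-\xi_i|+|u_j-\xi_j|\bigr).
\]
Since $d_k(\xi)=\min_{\ell\ne k}|\xi_k-\xi_\ell|\le|\xi_i-\xi_j|$ for $k\in\{i,j\}$, I can set $a_k=|u_k-\xi_k|/d_k(\xi)$ and rewrite the bound as
\[
|u_i-v_j|\ge\bigl(1-(a_i+a_j)\bigr)|\xi_i-\xi_j|,
\]
after factoring $|\xi_i-\xi_j|$ out of the two subtracted terms.

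The remaining task is to show $a_i+a_j\le 2^{1/q}\|(u-\xi)/d(\xi)\|_p$. This follows from the elementary Hölder-type inequality
\[
x+y=1\cdot x+1\cdot y\le (1^q+1^q)^{1/q}(x^p+y^p)^{1/p}=2^{1/q}(x^p+y^p)^{1/p}\qquad(x,y\ge 0),
\]
with the usual conventions at $p=1$ and $p=\infty$. Applying this with $x=a_i$, $y=a_j$ yields
\[
a_i+a_j\le 2^{1/q}(a_i^p+a_j^p)^{1/p}\le 2^{1/q}\biggl\|\frac{u-\xi}{d(\xi)}\biggr\|_p,
\]
and plugging this into the preceding display gives exactly the desired inequality \eqref{eq:u-v-inequalities-1}.

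No step is really an obstacle: the whole argument is a triangle-inequality computation, and the only potentially subtle point is checking that the Hölder bound $x+y\le 2^{1/q}(x^p+y^p)^{1/p}$ behaves correctly at the endpoints $p=1$ (where $2^{1/q}=1$ and the inequality is $x+y\le x+y$) and $p=\infty$ (where it becomes $x+y\le 2\max(x,y)$). Both are immediate, so the proof reduces to the three displayed lines above together with the case $i=j$.
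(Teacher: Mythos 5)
Your argument is correct: the decomposition $u_i-v_j=(\xi_i-\xi_j)-(\xi_i-u_i)-(v_j-\xi_j)$, the replacement of $|v_j-\xi_j|$ by $|u_j-\xi_j|$ via the coordinate-wise hypothesis, the bounds $d_i(\xi),d_j(\xi)\le|\xi_i-\xi_j|$, and the two-term H\"older estimate $a_i+a_j\le 2^{1/q}(a_i^p+a_j^p)^{1/p}\le 2^{1/q}\left\|\frac{u-\xi}{d(\xi)}\right\|_p$ (with the correct conventions at $p=1$ and $p=\infty$) give exactly \eqref{eq:u-v-inequalities-1}, and the case $i=j$ is indeed trivial. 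Note that the paper does not reproduce a proof of this lemma --- it is quoted from the reference [PV15] --- but your triangle-inequality-plus-H\"older computation is the standard argument behind such estimates and is in the same spirit as the way the paper derives Lemmas~\ref{lem:u-v-inequalities-2} and~\ref{lem:u-v-inequalities-3} from Lemma~\ref{lem:u-v-inequalities-basic}, so no gap remains.
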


\begin{lemma} \label{lem:u-v-inequalities-2}
Let ${u,v,\xi \in \Kset^n}$, ${\alpha \ge 0}$ and ${1 \le p \le \infty}$. If $v$ is a vector with distinct components such that
\begin{equation} \label{eq:inequality-vectors-cone-norm-h-condition}
    \|u - \xi\| \preceq \alpha \|v - \xi\|,
\end{equation}
then for all ${i,j \in I_n}$,
\begin{equation} \label{eq:u-v-inequalities-2}
    |u_j - v_i| \ge \left( 1 - (1 + \alpha) \left\| \frac{v - \xi}{d(v)} \right\|_p \right) |v_i - v_j|.
\end{equation}
\end{lemma}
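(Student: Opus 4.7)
The plan is to reduce the claim to a short chain of triangle-inequality estimates. First I would dispose of the trivial case $i = j$: then the right-hand side of \eqref{eq:u-v-inequalities-2} vanishes and the left-hand side is nonnegative, so there is nothing to prove, and it remains to treat $i \ne j$.

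For $i \ne j$, my starting point is the decomposition $u_j - v_i = (v_j - v_i) + (u_j - v_j)$ combined with the reverse triangle inequality, which gives $|u_j - v_i| \ge |v_i - v_j| - |u_j - v_j|$. The next step is to control $|u_j - v_j|$ by inserting $\xi_j$: since the cone-norm hypothesis $\|u - \xi\| \preceq \alpha \|v - \xi\|$ is, coordinate-wise, simply $|u_j - \xi_j| \le \alpha|v_j - \xi_j|$, one application of the triangle inequality yields
\[
|u_j - v_j| \;\le\; |u_j - \xi_j| + |\xi_j - v_j| \;\le\; (1 + \alpha)\,|v_j - \xi_j|.
\]

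It then remains to convert $|v_j - \xi_j|$ into the $p$-norm quantity appearing in the conclusion. Using that each coordinate of a nonnegative vector in $\Rset^n$ is dominated by its $p$-norm for every $p \in [1,\infty]$, and that $d_j(v) = \min_{k \ne j}|v_j - v_k| \le |v_i - v_j|$ whenever $i \ne j$, one obtains
\[
|v_j - \xi_j| \;=\; d_j(v)\cdot\frac{|v_j - \xi_j|}{d_j(v)} \;\le\; |v_i - v_j|\, \left\|\frac{v-\xi}{d(v)}\right\|_p.
\]
Feeding this back into the previous two displays yields \eqref{eq:u-v-inequalities-2} directly.

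The argument is essentially pure bookkeeping with the triangle inequality and I do not anticipate any real obstacle. The only mild subtlety worth flagging is that the hypothesis that $v$ has distinct components is used precisely to ensure $d(v)$ has strictly positive coordinates, so that the quotient $(v-\xi)/d(v)$ entering the statement is well-defined in the first place.
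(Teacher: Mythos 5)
Your argument is correct, and every step checks out: the coordinate-wise reading of the cone-norm hypothesis, the bound $|u_j-v_j|\le(1+\alpha)|v_j-\xi_j|$, and the conversion $|v_j-\xi_j|\le d_j(v)\,\bigl\|\tfrac{v-\xi}{d(v)}\bigr\|_p\le|v_i-v_j|\,\bigl\|\tfrac{v-\xi}{d(v)}\bigr\|_p$ are all valid (and the chain remains valid even when the bracket in \eqref{eq:u-v-inequalities-2} is negative, so no case distinction beyond $i=j$ is needed). The route is, however, not the one the paper takes: the paper stays at the level of vector inequalities, using the cone-norm triangle inequality to get $\|u-v\|\preceq(1+\alpha)\|v-\xi\|$, dividing by $d(v)$ and taking the $p$-norm, and then invoking the cited inequality \eqref{eq:u-v-inequalities-basic-1} of Lemma~\ref{lem:u-v-inequalities-basic} (from \cite{PC14}) applied to the pair $u,v$, which yields \eqref{eq:u-v-inequalities-2} in two lines. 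Your proof instead works coordinate-wise from the start via the decomposition $u_j-v_i=(v_j-v_i)+(u_j-v_j)$, which in effect re-proves the needed instance of Lemma~\ref{lem:u-v-inequalities-basic} inline. What the paper's version buys is brevity and reuse of an already-stated tool; what yours buys is a self-contained elementary argument that does not depend on the external reference, at the cost of repeating the triangle-inequality bookkeeping that Lemma~\ref{lem:u-v-inequalities-basic} encapsulates.
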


\begin{proof}
By the triangle inequality of cone norm in $\Kset^n$  and \eqref{eq:inequality-vectors-cone-norm-h-condition}, we obtain
\[
\|u - v\| \preceq \|u - \xi\| + \|v - \xi\| \preceq (1 + \alpha) \|v - \xi\|,
\]
which yields
\[
 \left \| \frac{u - v}{d(v)} \right\| \preceq (1 + \alpha) \left \| \frac{v - \xi}{d(v)} \right\|.
\]
Taking the $p$-norm, we get
\begin{equation} \label{eq:u-v-h-norm-inequality}
 \left \| \frac{u - v}{d(v)} \right\|_p \le (1 + \alpha) \left \| \frac{v - \xi}{d(v)} \right\|_p.
\end{equation}
From \eqref{eq:u-v-inequalities-basic-1} and \eqref{eq:u-v-h-norm-inequality}, we obtain \eqref{eq:u-v-inequalities-2}
which completes the proof.
\end{proof}

\begin{lemma} \label{lem:u-v-inequalities-3}
Let ${u,v,\xi \in \Kset^n}$, ${\alpha \ge 0}$ and ${1 \le p \le \infty}$. If $v$ is a vector with distinct components such that \eqref{eq:inequality-vectors-cone-norm-h-condition} holds,
then for all ${i,j \in I_n}$,
\begin{equation} \label{eq:u-v-inequalities-3}
    |u_i - u_j| \ge \left(1 - 2^{1/q} \, (1 + \alpha) \left\| \frac{v - \xi}{d(v)} \right\|_p  \right) |v_i - v_j|.
\end{equation}
\end{lemma}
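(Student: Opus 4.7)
The plan is to mimic exactly the proof of Lemma~\ref{lem:u-v-inequalities-2} given just above, but now invoke the second inequality \eqref{eq:u-v-inequalities-basic-2} of Lemma~\ref{lem:u-v-inequalities-basic} instead of the first one \eqref{eq:u-v-inequalities-basic-1}. The right-hand side of the target bound \eqref{eq:u-v-inequalities-3} differs from the right-hand side of \eqref{eq:u-v-inequalities-2} only by the additional factor $2^{1/q}$, which is precisely the factor that distinguishes \eqref{eq:u-v-inequalities-basic-2} from \eqref{eq:u-v-inequalities-basic-1}. So structurally the argument should be a one-line adaptation.

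First I would derive, from the cone-norm triangle inequality in $\Kset^n$ applied to $u - v = (u - \xi) + (\xi - v)$, together with the hypothesis \eqref{eq:inequality-vectors-cone-norm-h-condition}, the componentwise bound
\[
\|u - v\| \preceq \|u - \xi\| + \|v - \xi\| \preceq (1 + \alpha)\,\|v - \xi\|.
\]
Dividing componentwise by the positive vector $d(v)$ (this is legitimate since $v$ has distinct components, so each $d_i(v) > 0$) and then taking the $p$-norm yields
\[
\left\|\frac{u - v}{d(v)}\right\|_p \le (1 + \alpha)\,\left\|\frac{v - \xi}{d(v)}\right\|_p,
\]
which is exactly \eqref{eq:u-v-h-norm-inequality} from the previous proof.

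Next I would apply \eqref{eq:u-v-inequalities-basic-2} of Lemma~\ref{lem:u-v-inequalities-basic} to the pair $(u, v)$, which is legal because $v$ has distinct components. This gives, for all $i, j \in I_n$,
\[
|u_i - u_j| \ge \left(1 - 2^{1/q}\,\left\|\frac{u - v}{d(v)}\right\|_p\right)|v_i - v_j|.
\]
Substituting the bound on $\|(u-v)/d(v)\|_p$ from the previous step produces \eqref{eq:u-v-inequalities-3}.

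I do not expect a genuine obstacle here; the only thing to be careful about is the observation that the factor in front of the $p$-norm in the final bound is a decreasing function of that $p$-norm, so substituting an upper bound for $\|(u-v)/d(v)\|_p$ produces a lower bound for $|u_i - u_j|$, preserving the direction of the inequality. Everything else is a direct reuse of the proof of Lemma~\ref{lem:u-v-inequalities-2}, with \eqref{eq:u-v-inequalities-basic-2} replacing \eqref{eq:u-v-inequalities-basic-1} at the final step.
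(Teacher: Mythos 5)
Your proposal is correct and is essentially identical to the paper's own argument: the paper also combines the bound $\left\|\frac{u-v}{d(v)}\right\|_p \le (1+\alpha)\left\|\frac{v-\xi}{d(v)}\right\|_p$ (established via the cone-norm triangle inequality in the proof of Lemma~\ref{lem:u-v-inequalities-2}) with inequality \eqref{eq:u-v-inequalities-basic-2} of Lemma~\ref{lem:u-v-inequalities-basic} applied to the pair $(u,v)$. No gaps; your remark about the monotonicity needed to substitute the upper bound is a correct (if implicit in the paper) detail.
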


\begin{proof}
From \eqref{eq:u-v-inequalities-basic-2} and \eqref{eq:u-v-h-norm-inequality}, we get \eqref{eq:u-v-inequalities-3}
which completes the proof.
\end{proof}


\section{Local convergence theorem of the first type}
\label{sec:Local-convergence-theorem-of-the-first-type}

Let ${f \in \Kset[z]}$ be a polynomial of degree ${n \ge 2}$ which has only simple zeros in $\Kset$,
and let  ${\xi \in \Kset^n}$ be a root-vector of $f$.
In this section we study the convergence of the Ehrlich-type methods \eqref{eq:Ehrlich-high-order} with respect to the function of initial conditions
${E \colon \Kset^n \to \Rset_+}$ defined as follows
\begin{equation} \label{eq:FIC-first}
    E(x) = \left\| \frac{x - \xi}{d(\xi)}  \right\|_p \quad (1 \le p \le \infty).
\end{equation}

Let ${a > 0}$ and ${b \ge 1}$. Throughout this section, we define the function $\phi$ and the real number $R$ by
\eqref{eq:phi-Proinov} and \eqref{eq:R-Proinov}, respectively.
It is easy to show that $R$ is the unique solution of the equation
${\phi(t) = 1}$ in the interval ${[0,\tau)}$, where ${\tau = 2 / (b + 1 + \sqrt{(b - 1)^2 + 4 a})}$.
Note that $\phi$ is an increasing function which maps $[0,R]$ onto [0,1].
Besides, $\phi$ is quasi-homogeneous of degree $2$ on ${[0,R]}$.
In the next definition, we introduce a sequence of such functions.

\begin{definition} \label{def:phi-N}
We define the sequence $(\phi_N)_{N = 0}^\infty$ of nondecreasing functions $\phi_N \colon [0,R] \to [0,1]$ 
recursively by setting
$ \phi_{0}(t) = 1$ and
\begin{equation}\label{eq:phi-N}
  \phi_{N +1}(t) = \frac{a t^2 \phi_{N}(t)}{(1 - t)(1 - b t) - a t^2\phi_{N}(t)} \, ,
\end{equation}
where ${a > 0}$ and ${b \ge 1}$ are constants.
\end{definition}

\begin{correctness41}
We prove the correctness of the definition by induction.
For $N = 0$ it is obvious.
Assume that for some ${N \ge 0}$ the function $\phi_N$ is well-defined and nondecreasing on ${[0,R]}$ and ${\phi_N(R) = 1}$.
We shall prove the same for $\phi_{N + 1}$.
It follows from the induction hypothesis that
\begin{equation} \label{eq:denominator-of-phi-N-positive}
(1 - t) (1 - b t) - a t^2 \phi_N(t) \ge  (1 - t) (1 - b t) - a t^2 > 0 \quad\text{for all } t \in [0,R]
\end{equation}
which means that the function ${\phi_{N + 1}}$ is well-defined on ${[0, R]}$.
From \eqref{eq:phi-N} and the induction hypothesis, we deduce that ${\phi_{N + 1}}$ is nondecreasing on [0,R].
From \eqref{eq:phi-N} and ${\phi_N(R) = 1}$, we obtain
\[
  \phi_{N + 1}(R)  =  \displaystyle \frac{a R^2 \phi_N(R)}{(1 - R)(1 - b R) - a R^2 \phi_N(R)}
   =  \displaystyle \frac{a R^2}{(1 - R)(1- b R) - a R^2} = \phi(R) = 1.
\]
This completes the induction and the proof of the correctness of Definition~\ref{def:phi-N}.
\qed
\end{correctness41}

\begin{definition} \label{def:varphi-N}
For any integer ${N \ge 0}$, we define the function ${\varphi_N \colon [0,R] \to [0,R]}$ as follows
\begin{equation} \label{eq:varphi-N}
  \varphi_{N}(t) = t \phi_{N}(t),
\end{equation}
where the function $\phi_{N}$ is defined by Definition~\ref{def:phi-N}.
\end{definition}

In the next lemma, we present some properties of the functions $\phi_N$ and $\varphi_N$.

\begin{lemma} \label{lem:phi-N-properties}
Let ${N \ge 0}$. Then:
\begin{enumerate}
 \item $\phi_N$ is a quasi-homogeneous function of degree ${2 N}$ on ${[0,R]}$;
 \item ${\phi_{N+1}(t) \le \phi(t) \, \phi_{N}(t)}$ for every ${t \in [0,R]}$;
 \item ${\phi_{N+1}(t) \le \phi_{N}(t)}$ for every ${t \in [0,R]}$;
 \item ${\phi_N(t) \le \phi(t)^N}$ for every ${t \in [0,R]}$;
 \item $\varphi_N$ is a gauge function of order ${2 N + 1}$ on ${[0,R]}$.
\end{enumerate}
\end{lemma}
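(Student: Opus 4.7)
The plan is to establish the five claims in order, using two facts already recorded in the verification of Definition~\ref{def:phi-N}: namely, that $\phi_N$ maps $[0,R]$ into $[0,1]$ with $\phi_N(R)=1$, and that $\phi_N$ is nondecreasing on $[0,R]$.

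For (i) I would argue by induction on $N$. The base case $\phi_0\equiv 1$ is quasi-homogeneous of degree $0=2\cdot 0$. For the inductive step, fix $\lambda\in[0,1]$ and $t\in[0,R]$ and apply the recursion \eqref{eq:phi-N} at $\lambda t$. The numerator satisfies $a(\lambda t)^2\phi_N(\lambda t)\le \lambda^{2(N+1)}at^2\phi_N(t)$ by the induction hypothesis. The denominator $(1-\lambda t)(1-b\lambda t)-a(\lambda t)^2\phi_N(\lambda t)$ is bounded below by $(1-t)(1-bt)-at^2\phi_N(t)$, because $s\mapsto(1-s)(1-bs)$ is decreasing on $[0,R]$ (both factors are positive and decreasing there, since $R<1/b$) while $s\mapsto as^2\phi_N(s)$ is nondecreasing. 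Taking the ratio of these bounds gives $\phi_{N+1}(\lambda t)\le \lambda^{2(N+1)}\phi_{N+1}(t)$.

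Parts (ii)--(iv) are short algebraic consequences. For (ii), rewrite $\phi(t)\phi_N(t)=\frac{at^2\phi_N(t)}{(1-t)(1-bt)-at^2}$; this shares the numerator of $\phi_{N+1}(t)$, and the inequality $\phi_N(t)\le 1$ makes the denominator of $\phi_{N+1}(t)$ the larger of the two, yielding the claim. Part (iii) follows from (ii) because $\phi(t)=\phi_1(t)\le 1$ on $[0,R]$. Part (iv) is a one-line induction on $N$ using (ii): $\phi_{N+1}(t)\le \phi(t)\phi_N(t)\le \phi(t)\cdot\phi(t)^N$.

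For (v), since $t\mapsto t$ is quasi-homogeneous of degree $1$ and $\phi_N$ is quasi-homogeneous of degree $2N$ by (i), the product $\varphi_N(t)=t\phi_N(t)$ is quasi-homogeneous of degree $2N+1$; the identity $\varphi_N(R)=R\phi_N(R)=R$ combined with Lemma~\ref{lem:gauge-function-condition} then identifies $\varphi_N$ as a gauge function of order $2N+1$ on $[0,R]$. The only delicate step in the whole argument is the denominator monotonicity used in (i); everything else is routine bookkeeping around the bound $\phi_N\le 1$.
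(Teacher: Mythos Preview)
Your proof is correct and follows essentially the same approach as the paper's own argument; the paper treats (i) by a one-line ``easily proved by induction'' and deduces (v) directly from (i) and the definition, while you supply the explicit numerator/denominator monotonicity for (i) and invoke Lemma~\ref{lem:gauge-function-condition} for (v), but these are precisely the details the paper leaves implicit.
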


\begin{proof}
Claim (i) can easily be proved by induction.
From \eqref{eq:phi-N} and \eqref{eq:denominator-of-phi-N-positive}, we get
\[
  \phi_{N + 1} (t) = \frac{a t^2 \phi_{N}(t)}{(1- t)(1 - b t) - a t^2 \phi_{N}(t)}
   \le  \frac{a t^2 \phi_{N}(t)}{(1- t)(1 - b t) - a t^2} = \phi(t) \, \phi_{N}(t)
\]
which proves (ii). Claim (iii) is a trivial consequence from (ii).
Claim (iv) follows from (ii) by induction.
Claim (v) follows from (i) and the definition of $\varphi_N$.
\end{proof}

\begin{lemma} \label{lem:Ehrlich-high-order-map}
Let ${f \in \Kset[z]}$ be a polynomial of degree ${n \ge 2}$, ${\xi \in \Kset^n}$ be a root-vector of $f$ and ${N \ge 0}$.
Suppose ${x \in D_N}$ is a vector such that ${f(x_i) \ne 0}$ for some ${i \in I_n}$.

\emph{(i)} If ${x \, \# \,T^{(N)}(x)}$, then
\begin{equation} \label{eq:equation-A}
\frac{f'(x_i)}{f(x_i)} - \sum_{j \ne i} \frac{1}{x_i - T_j^{(N)}(x)} = \frac{1 - \sigma_i}{x_i -\xi_i},
\end{equation}
where ${\sigma_i \in \Kset}$ is defined by
\begin{equation} \label{eq:sigma-definition}
\sigma_i = (x_i - \xi_i) \sum_{j \ne i}
\frac{ T_j ^{(N)}(x) - \xi_j}{(x_i - \xi_j)(x_i - T_j ^{(N)}(x))} \,.
\end{equation}

 \emph{(ii)} If ${x \in D_{N + 1}}$, then
 \begin{equation} \label{eq:Ehrlich-high-order-map}
 T^{(N + 1)}_i(x) - \xi_i = - \frac{ \sigma_i}{1 - \sigma_i} (x_i - \xi_i).
\end{equation}
\end{lemma}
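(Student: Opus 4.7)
The plan is to use the logarithmic derivative identity for a polynomial with known root-vector and then combine fractions carefully, keeping track of the denominators to be sure each step is legal.

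For part (i), I would start from the factorization ${f(z) = a_0 \prod_{k=1}^n (z - \xi_k)}$, which gives the logarithmic derivative
\[
\frac{f'(x_i)}{f(x_i)} = \sum_{k=1}^n \frac{1}{x_i - \xi_k},
\]
an identity that is valid precisely because ${f(x_i) \ne 0}$ forces ${x_i \ne \xi_k}$ for every $k$. Separating the term ${k = i}$ and subtracting ${\sum_{j \ne i} 1/(x_i - T_j^{(N)}(x))}$, the hypothesis ${x \, \# \, T^{(N)}(x)}$ ensures ${x_i \ne T_j^{(N)}(x)}$ for all ${j \ne i}$, so each difference
\[
\frac{1}{x_i - \xi_j} - \frac{1}{x_i - T_j^{(N)}(x)} = -\,\frac{T_j^{(N)}(x) - \xi_j}{(x_i - \xi_j)(x_i - T_j^{(N)}(x))}
\]
is well defined. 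Factoring ${1/(x_i - \xi_i)}$ out of the resulting expression produces exactly ${(1 - \sigma_i)/(x_i - \xi_i)}$ with $\sigma_i$ as given in \eqref{eq:sigma-definition}, which is \eqref{eq:equation-A}.

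For part (ii), the assumption ${x \in D_{N+1}}$ guarantees (by \eqref{eq:domain-Ehrlich-high-order}) that ${x \in D_N}$, that ${x \, \# \, T^{(N)}(x)}$, and that the denominator appearing in the definition of ${T_i^{(N+1)}(x)}$ is nonzero, so part (i) applies and in particular ${1 - \sigma_i \ne 0}$. Dividing numerator and denominator of \eqref{eq:Ehrlich-high-order-iteration-function} by ${f(x_i)}$ and substituting \eqref{eq:equation-A} yields
\[
T_i^{(N+1)}(x) = x_i - \frac{x_i - \xi_i}{1 - \sigma_i}.
\]
Subtracting $\xi_i$ from both sides and combining the resulting two terms over the common denominator ${1 - \sigma_i}$ gives \eqref{eq:Ehrlich-high-order-map}.

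There is no real obstacle here; the argument is entirely algebraic, and the only point requiring care is the bookkeeping of the nonvanishing conditions (${f(x_i) \ne 0}$ for step (i)'s logarithmic derivative, ${x \, \# \, T^{(N)}(x)}$ to justify the common-denominator manipulation, and membership in $D_{N+1}$ to divide by ${1 - \sigma_i}$ in step (ii)). If anything deserves emphasis, it is verifying that the conditions imposed by ${x \in D_{N+1}}$ are exactly what is needed so that each reciprocal and each fraction-combining step in the derivation is meaningful.
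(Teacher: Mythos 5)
Your proposal is correct and follows essentially the same route as the paper: the logarithmic-derivative identity $f'(x_i)/f(x_i)=\sum_k 1/(x_i-\xi_k)$, splitting off the $k=i$ term, the same fraction-combining step yielding $(1-\sigma_i)/(x_i-\xi_i)$, and in (ii) dividing through by $f(x_i)$ and by $1-\sigma_i$ exactly as the paper does. The bookkeeping of nonvanishing conditions you highlight matches the paper's use of the hypotheses, so no gaps.
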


\begin{proof}
(i)
Taking into account that $\xi$ is a root-vector of $f$, we get
\begin{eqnarray*}
\frac{f'(x_i)}{f(x_i)} - \sum_{j \ne i} \frac{1}{x_i - T_j^{(N)}(x)} & = & \sum_ {j = 1} ^n \frac{1}{x_i - \xi_j} - \sum_{j \ne i} \frac{1}{x_i - T_j^{(N)}(x)} \\
& = & \frac{1}{x_i - \xi_i} + \sum_ {j \ne i} \left( \frac{1}{x_i - \xi_j} - \frac{1}{x_i - T_j ^{(N)}(x)} \right) \\
& = & \frac{1}{x_i - \xi_i} - \sum_{j \ne i}\frac{T_j^ {(N)} (x)- \xi_j}{(x_i - \xi_j)(x_i - T_j ^{(N)}(x))} =  \frac{1 - \sigma_i}{x_i - \xi_i}
\end{eqnarray*}
which proves \eqref{eq:equation-A}.

(ii)
It follows from ${x \in D_{N + 1}}$ that
\begin{equation} \label{eq:correction-function-denominator}
f'(x_i) - f(x_i) \sum_{j \ne i} \frac{1}{x_i - T_j^{(N)}(x)}  \ne 0.
\end{equation}
Then from \eqref{eq:Ehrlich-high-order-iteration-function} and \eqref{eq:equation-A}, we obtain
\begin{eqnarray*}
T^{(N + 1)}_i(x) - \xi_i & = & x_i - \xi_i - \displaystyle\left(\frac{f'(x_i)}{f(x_i)} - \sum_{j \ne i} \frac{1}{x_i - T_j^{(N)}(x)} \right)^{-1} \\
& = &  x_i - \xi_i - \displaystyle \frac{x_i - \xi_i}{1 - \sigma_i} = - \frac{ \sigma_i}{1 - \sigma_i}(x_i - \xi_i)
\end{eqnarray*}
which completes the proof.
\end{proof}

\begin{lemma} \label{lem:Ehrlich-iterated-contraction}
Let ${f \in \Kset[z]}$ be a polynomial of degree ${n \ge 2}$ which has  only simple zeros in $\Kset$, ${\xi \in \Kset^n}$ be a root-vector of $f$, ${N \ge 0}$ and ${1 \le p \le \infty}$. Suppose ${x \in \Kset^n}$ is a vector satisfying the following condition
\begin{equation} \label{eq:Ehrlich-FIN-condition}
E(x) = \left \| \frac{x - \xi}{d(\xi)} \right \|_p < R = \frac{2}{b + 1 + \sqrt{(b - 1)^ 2 + 8 a}}\, ,
\end{equation}
where the function ${E \colon \Kset^n \to \Rset_+}$ is defined by \eqref{eq:FIC-first}, ${a = (n - 1) ^{1/q}}$ and ${b = 2^ {1/q}}$.
Then
\begin{equation} \label{eq:Ehrlich-first-local-relations}
  x \in D_N, \, \|T^{(N)} (x) - \xi\| \preceq \phi_N(E(x)) \|x - \xi\| \quad\text{and}\quad  E(T^{(N)}(x)) \le \varphi_N(E(x)).
\end{equation}
\end{lemma}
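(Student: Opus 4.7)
The proof will go by induction on $N$. The base case $N=0$ is immediate: $T^{(0)}(x)=x$, $D_0=\Kset^n$, $\phi_0\equiv 1$, $\varphi_0(t)=t$, so all three assertions reduce to identities. For the inductive step, assume the claim for $N$; fix $x$ with $t:=E(x)<R$. Note that $a\ge 1$ and $b\ge 1$ force $R\le 1/(1+\sqrt2)<1/2$, so by Lemma~\ref{lem:u-v-inequality-distinct-components} the vector $x$ has distinct components, and we certainly have $bt<bR<1$.

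The heart of the argument is a bound on the quantity $\sigma_i$ from Lemma~\ref{lem:Ehrlich-high-order-map}(i). By the inductive hypothesis, $\|T^{(N)}(x)-\xi\|\preceq\phi_N(t)\|x-\xi\|\preceq\|x-\xi\|$ (since $\phi_N\le 1$ on $[0,R]$). Applying Lemma~\ref{lem:u-v-inequalities-basic} with $u=x,\,v=\xi$ and Lemma~\ref{lem:u-v-inequalities-1} with $u=x,\,v=T^{(N)}(x)$ gives, for $j\ne i$,
\[
|x_i-\xi_j|\ge(1-t)|\xi_i-\xi_j|,\qquad |x_i-T_j^{(N)}(x)|\ge(1-bt)|\xi_i-\xi_j|.
\]
Substituting these together with $|T_j^{(N)}(x)-\xi_j|\le\phi_N(t)|x_j-\xi_j|$ into \eqref{eq:sigma-definition}, and then applying H\"older's inequality with conjugate exponents $p,q$ using the coordinate bound $|x_j-\xi_j|/d_j(\xi)\le t$ and the elementary estimate $\sum_{j\ne i}1/|\xi_i-\xi_j|^q\le(n-1)/d_i(\xi)^q$, yields
\[
|\sigma_i|\;\le\;\frac{at\phi_N(t)}{(1-t)(1-bt)}\cdot\frac{|x_i-\xi_i|}{d_i(\xi)}\;\le\;\mu:=\frac{at^2\phi_N(t)}{(1-t)(1-bt)}.
\]
Inequality \eqref{eq:denominator-of-phi-N-positive} from the correctness proof of Definition~\ref{def:phi-N} shows $\mu<1$ on $[0,R]$.

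To place $x$ in $D_{N+1}$, the lower bound $|x_i-T_j^{(N)}(x)|\ge(1-bt)|\xi_i-\xi_j|>0$ gives $x\,\#\,T^{(N)}(x)$. For the non-vanishing of the denominator in \eqref{eq:domain-Ehrlich-high-order}, split into cases: if $f(x_i)\ne 0$, Lemma~\ref{lem:Ehrlich-high-order-map}(i) expresses this denominator as $f(x_i)(1-\sigma_i)/(x_i-\xi_i)$, which is nonzero because $|\sigma_i|<1$; if $f(x_i)=0$, then $x_i=\xi_{i'}$ for some $i'$, but $|x_i-\xi_i|<d_i(\xi)\le|\xi_i-\xi_{i'}|$ forces $i'=i$, so the denominator reduces to $f'(\xi_i)\ne 0$ by simplicity. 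In the first case Lemma~\ref{lem:Ehrlich-high-order-map}(ii) gives
\[
|T_i^{(N+1)}(x)-\xi_i|=\frac{|\sigma_i|}{|1-\sigma_i|}|x_i-\xi_i|\le\frac{\mu}{1-\mu}|x_i-\xi_i|=\phi_{N+1}(t)|x_i-\xi_i|,
\]
which matches recursion \eqref{eq:phi-N}; the second case is trivial since then $T_i^{(N+1)}(x)=x_i=\xi_i$. Assembling coordinate-wise gives $\|T^{(N+1)}(x)-\xi\|\preceq\phi_{N+1}(t)\|x-\xi\|$, and dividing by $d(\xi)$ and taking the $p$-norm yields $E(T^{(N+1)}(x))\le t\phi_{N+1}(t)=\varphi_{N+1}(t)$, closing the induction. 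I expect the main obstacle to be the H\"older estimate for $\sigma_i$: the splitting $|x_j-\xi_j||\xi_i-\xi_j|^{-2}=\bigl(|x_j-\xi_j|/d_j(\xi)\bigr)\cdot\bigl(d_j(\xi)/|\xi_i-\xi_j|^2\bigr)$ has to be chosen precisely so that the $p$-factor contributes $t$ and the $q$-factor contributes the constant $a=(n-1)^{1/q}/d_i(\xi)$, after which the bound on $|\sigma_i|$ telescopes exactly into the definition of $\phi_{N+1}$.
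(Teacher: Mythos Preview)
Your proof is correct and follows essentially the same route as the paper: induction on $N$, the lower bounds on $|x_i-\xi_j|$ and $|x_i-T_j^{(N)}(x)|$ via Lemmas~\ref{lem:u-v-inequalities-basic} and~\ref{lem:u-v-inequalities-1}, a H\"older bound on $\sigma_i$, and then Lemma~\ref{lem:Ehrlich-high-order-map}(ii) combined with the recursion~\eqref{eq:phi-N}. The only cosmetic differences are that the paper organises the H\"older step via $|\xi_i-\xi_j|^2\ge d_i(\xi)d_j(\xi)$ and the induction hypothesis $E(T^{(N)}(x))\le\varphi_N(E(x))$ (rather than your splitting against $|\xi_i-\xi_j|^{-q}$), and the paper dismisses the case $f(x_i)=0$ as ``non-trivial case $f(x_i)\ne 0$'' whereas you spell out why $x_i=\xi_i$ there; neither changes the substance.
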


\begin{proof}
We shall prove statements by induction on $N$.
If ${N = 0}$, then \eqref{eq:Ehrlich-first-local-relations} holds trivially.
Assume that \eqref{eq:Ehrlich-first-local-relations} holds for some ${N \ge 0}$.

First, we show that ${x \in D_{N + 1}}$, i.e.
${x \, \# \, T^{(N)}(x)}$ and \eqref{eq:correction-function-denominator} holds for every ${i \in I_n}$.
It follows from the first inequality in \eqref{eq:Ehrlich-first-local-relations}
that the inequality \eqref{eq:inequality-vectors-cone-norm} is satisfied with ${u = x}$ and ${v = T^{(N)}(x)}$.
Then by Lemma~\ref{lem:u-v-inequalities-1} and \eqref{eq:Ehrlich-FIN-condition}, we obtain
\begin{equation} \label{eq:x-TN-inequality}
|x_i - T_j^{(N)}(x)| \ge \left( 1 - b \left\| \frac{x - \xi}{d(\xi)} \right\|_p \right)|\xi_i - \xi_j|
 \ge (1 -  b E(x)) \, d_j(\xi) > 0
\end{equation}
for every ${j \ne i}$.
Consequently, ${x \, \# \, T^{(N)}(x)}$.
It remains to prove \eqref{eq:correction-function-denominator} for every ${i \in I_n}$.
Let ${i \in I_n}$ be fixed.
We shall consider only the non-trivial case ${f(x_i) \ne 0}$.
In this case \eqref{eq:correction-function-denominator} is equivalent to
\begin{equation} \label{eq:correction-function-denominator-second-case}
\frac{f'(x_i)}{f(x_i)} - \sum_{j \ne i} \frac{1}{x_i - T_j^{(N)}(x)}  \ne 0.
\end{equation}
We define $\sigma_i$ by \eqref{eq:sigma-definition}. It follows from Lemma~\ref{lem:Ehrlich-high-order-map}(i) that \eqref{eq:correction-function-denominator-second-case}
is equivalent to ${\sigma_i \ne 1}$.
By Lemma~\ref{lem:u-v-inequalities-basic} with ${u = x}$ and ${v = \xi}$ and \eqref{eq:Ehrlich-FIN-condition}, we get
\begin{equation}\label{eq:x-xi-inequality}
		|x_i - \xi_j| \ge \left( 1 -  \left\| \frac{x - \xi}{d(\xi)} \right\|_p \right) \, |\xi_i - \xi_j|
		\ge (1 - E(x))\, d_i(\xi) > 0
\end{equation}
for every ${j \ne i}$.
From the triangle inequality in $\Kset$, \eqref{eq:x-TN-inequality}, \eqref{eq:x-xi-inequality}, induction hypothesis
and H\"older's inequality, we get
\begin{eqnarray} \label{eq:sigma-inequality}
|\sigma_i| & \le & |x_i - \xi_i| \sum_{j \ne i}\frac{|T_j ^{(N)}(x) - \xi_j|}{|x_i - \xi_j| \;
|x_i - T_j ^{(N)}(x)|} \nonumber \\
& \le & \frac{1}{(1 - E(x)) (1 - b E(x))}\frac{|x_i - \xi_i|}{d_i(\xi)}
\sum_{j \ne i} \frac{|T_j ^{(N)} (x) - \xi_j|}{d_j(\xi)} \nonumber \\
& \le & \frac{a E(x) \varphi_N (E(x))}{(1 - E(x)) (1 - b E(x))}  = \frac{a E(x)^2 \phi_N (E(x))}{(1 - E(x)) (1 - b E(x))} \, .
\end{eqnarray}
From this, ${\phi_N(E(x)) \le 1}$ and \eqref{eq:Ehrlich-FIN-condition}, we obtain
\[
|\sigma_i| \le \frac{a E(x)^2}{(1 - E(x))(1 - b E(x))} < 1
\]
which yields ${\sigma_i \ne 1}$ and so \eqref{eq:correction-function-denominator} holds.
Hence, $ x \in D_{N + 1}$.

Second, we show that the inequalities in \eqref{eq:Ehrlich-first-local-relations} hold for ${N + 1}$.
The first inequality for ${N + 1}$ is equivalent to
\begin{equation} \label{eq:Ehrlich-contraction-map-inequality-i-coordinate}
 |T^{(N + 1)}_i(x) - \xi_i| \le  \phi_{N + 1}(E(x)) |x_i - \xi_i| \quad\text{for all } i \in I_n \, .
\end{equation}
Let ${i \in I_n}$ be fixed. If ${x_i = \xi_i}$, then  ${T^{(N + 1)}_i(x) = \xi_i}$ and so \eqref{eq:Ehrlich-contraction-map-inequality-i-coordinate} becomes an equality.
Suppose ${x_i \ne \xi_i}$.
By Lemma~\ref{lem:Ehrlich-high-order-map}(ii), the triangle inequality in $\Kset$ and the estimate \eqref{eq:sigma-inequality}, we get
\begin{eqnarray*}
 |T^{(N + 1)}_i(x) - \xi_i| & = & \frac{|\sigma_i|}{|1 - \sigma_i|} | x_i - \xi_i|  \le \frac{|\sigma_i|}{1 - | \sigma_i|} | x_i - \xi_i| \\
 & \le & \displaystyle  \frac{a E(x)^2 \phi_N(E(x))}{(1 - E(x))(1 - b E(x)) - a E(x)^2 \phi_N(E(x))} | x_i - \xi_i|\\
& = & \phi_{N + 1}(E(x)) |x_i - \xi_i|
\end{eqnarray*}
which proves \eqref{eq:Ehrlich-contraction-map-inequality-i-coordinate}.
Dividing both sides of the inequality \eqref{eq:Ehrlich-contraction-map-inequality-i-coordinate} by ${d_i(\xi)}$ and taking the $p$-norm, we obtain
\[
 E(T^{(N + 1)}(x)) \le \varphi_{N + 1}(E(x))
\]
which proves that the second inequality in \eqref{eq:Ehrlich-first-local-relations} holds for ${N + 1}$.
This completes the induction and the proof of the lemma.
\end{proof}

Now we are ready to state the main result of this section.
In the case ${N = 1}$ this result coincides with Theorem~\ref{thm:Proinov-Ehrlich-local-first}.

\begin{theorem}\label{thm:first-local-Ehrlich}
Let ${f \in \Kset[z]}$ be a polynomial of degree ${n \ge 2}$ which has  only simple zeros in $\Kset$,
${\xi \in \Kset^n}$ be a root-vector of $f$, ${N \ge 1}$ and ${1 \le p \le \infty}$.
Suppose ${x^{(0)} \in \Kset^n}$ is an initial guess satisfying
\begin{equation} \label{eq:first-local-Ehrlich-initial-condition}
E(x^{(0)}) = \left \| \frac{x^{(0)} - \xi}{d(\xi)} \right \|_p < R = \frac{2}{b + 1 + \sqrt{(b - 1)^2 + 8a}} \, ,
\end{equation}
where the function ${E \colon \Kset^n \to \Rset_+}$ is defined by \eqref{eq:FIC-first}, ${a = (n - 1) ^{1/q}}$ and ${b = 2^ {1/q}}$.
Then the Ehrlich-type iteration \eqref{eq:Ehrlich-high-order} is well-defined and converges to $\xi$ with error estimates
\begin{equation}\label{eq:first-local-Ehrlich-error-estimate}
\|x^{(k+1)} - \xi\| \preceq \lambda^{(2 N + 1)^k} \, \|x^{(k)} - \xi\|
\quad\text{and}\quad
  \|x^{(k)} - \xi\| \preceq \lambda^{((2 N + 1)^k - 1) / (2N)} \|x^{(0)} - \xi\|
\end{equation}
for all ${k \ge 0}$, where ${\lambda = \phi_N(E(x^{(0)}))}$ and the function $\phi_N$ is defined by Definition~\ref{def:phi-N}.
\end{theorem}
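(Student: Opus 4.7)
The plan is to apply Corollary~\ref{cor:general-convergence-theorem} to the operator $T = T^{(N)}$ on the cone normed space $(\Kset^n,\|\cdot\|)$, with function of initial conditions $E$ defined by \eqref{eq:FIC-first}, interval $J = [0,R)$, strict gauge function $\varphi = \varphi_N$ of order $r = 2N+1$, and control function $\phi = \phi_N$. The analytic content of the theorem has already been concentrated in Lemma~\ref{lem:Ehrlich-iterated-contraction}; what remains is to verify that the hypotheses of the general convergence framework are met and then to identify the resulting estimates with \eqref{eq:first-local-Ehrlich-error-estimate}.

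First I would check that $\varphi_N$ is a \emph{strict} gauge of order $2N+1$ on $[0,R)$. Lemma~\ref{lem:phi-N-properties}(v) supplies gauge-hood on $[0,R]$, and since the correctness proof of Definition~\ref{def:phi-N} yields $\phi_N(R) = 1$, we have $\varphi_N(R) = R$, so $R$ is a fixed point and Lemma~\ref{lem:gauge-function-condition} (with $r = 2N+1 > 1$) promotes $\varphi_N$ to a strict gauge on $[0,R)$. The three conclusions of Lemma~\ref{lem:Ehrlich-iterated-contraction} then immediately provide the rest: the containment $x \in D_N$ combined with Theorem~\ref{thm:initial-point-test} shows that every $x^{(0)}$ with $E(x^{(0)}) < R$ is an initial point of $T^{(N)}$; the estimate $E(T^{(N)}(x)) \le \varphi_N(E(x))$ identifies $E$ as a function of initial conditions of $T^{(N)}$ with gauge $\varphi_N$ on $J$; and the estimate $\|T^{(N)}(x) - \xi\| \preceq \phi_N(E(x))\,\|x - \xi\|$, together with the monotonicity of $\phi_N$ (Definition~\ref{def:phi-N}) and the relation $\varphi_N(t) = t\,\phi_N(t)$ from Definition~\ref{def:varphi-N}, shows that $T^{(N)}$ is an iterated contraction at $\xi$ with control function $\phi_N$ satisfying \eqref{eq:Proinov-phi-property}.

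Corollary~\ref{cor:general-convergence-theorem} then delivers convergence of the iteration \eqref{eq:Ehrlich-high-order} together with the error estimates \eqref{eq:general-con-theorem-error-estimates-phi}, and these become \eqref{eq:first-local-Ehrlich-error-estimate} upon substituting $r = 2N+1$ and using $S_k(r) = ((2N+1)^k - 1)/(2N)$. There is no genuine obstacle in the proof of Theorem~\ref{thm:first-local-Ehrlich} itself; the real difficulty, already overcome upstream, lies in the inductive construction of Lemma~\ref{lem:Ehrlich-iterated-contraction}, and especially in securing the sharp per-step contraction factor $\phi_N$ rather than the coarser $\phi^N$ that would follow from Lemma~\ref{lem:phi-N-properties}(iv) and would not furnish the correct order $2N+1$.
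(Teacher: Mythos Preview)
Your proposal is correct and follows essentially the same route as the paper: apply Corollary~\ref{cor:general-convergence-theorem} with $T = T^{(N)}$, $E$ as in \eqref{eq:FIC-first}, $J = [0,R)$, gauge $\varphi_N$ and control $\phi_N$, using Lemma~\ref{lem:Ehrlich-iterated-contraction} for the three analytic ingredients, Lemma~\ref{lem:phi-N-properties}(v) together with Lemma~\ref{lem:gauge-function-condition} for strictness of the gauge, and Theorem~\ref{thm:initial-point-test} for the initial-point property. One small quibble with your closing commentary: replacing $\phi_N$ by the coarser bound $\phi^N$ from Lemma~\ref{lem:phi-N-properties}(iv) would still yield order $2N+1$ (since $\phi$ is quasi-homogeneous of degree $2$, so $t\,\phi(t)^N$ is of degree $2N+1$); the gain of $\phi_N$ over $\phi^N$ is only in the sharpness of the constant $\lambda$, not in the order.
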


\begin{proof}
We apply Corollary~\ref{cor:general-convergence-theorem} to the iteration function
${T^{(N)} \colon D_N \subset \Kset^n \to \Kset^n}$ defined by Definition~\ref{def:iteration-functions} and to the function
${E \colon \Kset^n \to \Rset_+}$ defined by \eqref{eq:FIC-first}.
Let ${J = [0, R)}$.
It follows from the Lemma~\ref{lem:Ehrlich-iterated-contraction}, Lemma~\ref{lem:phi-N-properties}(v) and 
Lemma~\ref{lem:gauge-function-condition} that
$E$ is a function of initial conditions of $T^{(N)}$ with a strict gauge function $\varphi_N$ of order ${r = 2 N + 1}$ on $J$.
Since $\xi$ is a root-vector of $f$, then ${E(\xi) = 0 \in J}$.
It follows from Lemma~\ref{lem:Ehrlich-iterated-contraction}, that ${T^{(N)}}$ is an iterated contraction at a point $\xi$ with respect to
$E$ and with control function $\phi_N$.
The fact that ${x^{(0)}}$ is an initial point of ${T^{(N)}}$ follows from
Lemma~\ref{lem:Ehrlich-iterated-contraction} and Theorem~\ref{thm:initial-point-test}.
Hence, all the assumptions of Corollary~\ref{cor:general-convergence-theorem} are satisfied, and the statement of
Theorem~\ref{thm:first-local-Ehrlich} follows from it.
\end{proof}

\begin{corollary}  \label{cor:local-Ehrlich}
Let ${f \in \Kset[z]}$ be a polynomial of degree ${n \ge 2}$ which has  only simple zeros in $\Kset$,
${\xi \in \Kset^n}$ be a root-vector of $f$, ${N \ge 1}$ and ${1 \le p \le \infty}$.
Suppose ${x^{(0)} \in \Kset^n}$ is an initial guess satisfying
\eqref{eq:first-local-Ehrlich-initial-condition}.
Then the Ehrlich-type iteration \eqref{eq:Ehrlich-high-order} is well-defined and converges to $\xi$ with error estimates
\begin{equation} \label{eq:local-Ehrlich-cor-error-estimate}
\|x^{(k+1)} - \xi\| \preceq \lambda^{N (2 N + 1)^k} \, \|x^{(k)} - \xi\|
\quad\text{and}\quad
  \|x^{(k)} - \xi\| \preceq \lambda^{((2 N + 1)^k - 1) / 2} \|x^{(0)} - \xi\|
\end{equation}
for all ${k \ge 0}$, where ${\lambda = \phi(E(x^{(0)}))}$ and $\phi$ is a real function defined by \eqref{eq:phi-Proinov}.
\end{corollary}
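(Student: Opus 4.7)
The plan is to deduce the corollary directly from Theorem~\ref{thm:first-local-Ehrlich} together with the comparison bound in Lemma~\ref{lem:phi-N-properties}(iv). Write $\mu = \phi_N(E(x^{(0)}))$ and $\lambda = \phi(E(x^{(0)}))$. Since the initial condition \eqref{eq:first-local-Ehrlich-initial-condition} is exactly the hypothesis of Theorem~\ref{thm:first-local-Ehrlich}, that theorem applies and yields that the iteration \eqref{eq:Ehrlich-high-order} is well-defined and satisfies
\[
\|x^{(k+1)} - \xi\| \preceq \mu^{(2N+1)^k} \, \|x^{(k)} - \xi\|, \qquad
\|x^{(k)} - \xi\| \preceq \mu^{((2N+1)^k - 1)/(2N)} \, \|x^{(0)} - \xi\|.
\]

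Next I would invoke Lemma~\ref{lem:phi-N-properties}(iv), which gives $\phi_N(t) \le \phi(t)^N$ for every $t \in [0,R]$, to conclude that $\mu \le \lambda^N$. Since $E(x^{(0)}) \in [0,R)$ and $\phi$ maps $[0,R]$ into $[0,1]$, we have $0 \le \mu \le \lambda^N \le 1$. For any exponent $\alpha > 0$, monotonicity of the map $t \mapsto t^\alpha$ on $[0,1]$ then gives $\mu^\alpha \le \lambda^{N\alpha}$. Applying this with $\alpha = (2N+1)^k$ and $\alpha = ((2N+1)^k - 1)/(2N)$ produces
\[
\mu^{(2N+1)^k} \le \lambda^{N(2N+1)^k}, \qquad
\mu^{((2N+1)^k-1)/(2N)} \le \lambda^{((2N+1)^k-1)/2}.
\]

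Finally, I would combine these scalar inequalities with the cone-norm estimates from Theorem~\ref{thm:first-local-Ehrlich}, using the fact that if $\|u\| \preceq c\|v\|$ in $\Kset^n$ and $0 \le c \le c'$, then $\|u\| \preceq c'\|v\|$ (because $\|v\| \succeq 0$ coordinatewise). This immediately yields the two estimates \eqref{eq:local-Ehrlich-cor-error-estimate} and completes the proof.

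There is no real obstacle here: the corollary is a weakening of Theorem~\ref{thm:first-local-Ehrlich} obtained by replacing the sharper but less explicit contraction factor $\phi_N(E(x^{(0)}))$ with the simpler, recursion-free quantity $\phi(E(x^{(0)}))^N$ supplied by Lemma~\ref{lem:phi-N-properties}(iv). The only point requiring a moment of care is the direction of the monotonicity in the exponent, which is handled by noting that $\mu,\lambda \in [0,1]$ so raising to a positive power preserves the inequality $\mu \le \lambda^N$.
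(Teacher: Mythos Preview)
Your proof is correct and follows exactly the approach indicated in the paper: apply Theorem~\ref{thm:first-local-Ehrlich} and then weaken the estimates using Lemma~\ref{lem:phi-N-properties}(iv). The paper's proof is a one-line reference to these two results, and you have simply (and correctly) filled in the routine details of how the inequality $\phi_N(t)\le\phi(t)^N$ translates into the stated exponents.
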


\begin{proof}
It follows from Theorem~\ref{thm:first-local-Ehrlich} and Lemma~\ref{lem:phi-N-properties}(iv).
\end{proof}

Let ${0 < h < 1}$ be a given number. Solving the equation ${\phi(t) = h^2}$ in the interval ${(0,R)}$, we can reformulate
Corollary~\ref{cor:local-Ehrlich} in the following equivalent form.

\begin{corollary} \label{cor:first-local-Ehrlich-var}
Let ${f \in \Kset[z]}$ be a polynomial of degree $n \ge 2$ which has $n$ simple zeros in $\Kset$, ${\xi \in \Kset^n}$ be a root-vector of
$f$, ${N \ge 1}$, ${1 \le p \le \infty}$ and ${0 < h < 1}$.
Suppose ${x^{(0)} \in \Kset^n}$ is an initial guess which satisfies
\begin{equation} \label{eq:first-local-Ehrlich-var-initial-condition}
E(x^{(0)}) = \left \| \frac{x^{(0)} - \xi}{d(\xi)} \right \|_p  < R_h = \frac{2}{b + 1 + \sqrt{ (b - 1)^2 + 4 a (1 + 1/h^2)}}\,,
\end{equation}
where ${a = (n - 1) ^{1/q}}$ and ${b = 2^ {1/q}}$.
Then the Ehrlich-type method \eqref{eq:Ehrlich-high-order} is well-defined and converges to $\xi$ with error estimates
\begin{equation} \label{eq:first-local-Ehrlich-var-error-estimate}
\|x^{(k+1)} - \xi\| \preceq h^{2 N (2 N + 1)^k} \, \|x^{(k)} - \xi\|
\quad\text{and}\quad
\|x^{(k)} - \xi\| \preceq h^{(2 N + 1)^k - 1} \|x^{(0)} - \xi\|
\end{equation}
for all ${k \ge 0}$.
\end{corollary}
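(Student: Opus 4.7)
The strategy is to derive this corollary directly from Corollary~\ref{cor:local-Ehrlich} by a reparametrization, exploiting the hint preceding the statement that $R_h$ should be the unique root of $\phi(t) = h^2$ in $(0, R)$, where $\phi$ is the function from \eqref{eq:phi-Proinov}.

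First I would verify this algebraic characterization of $R_h$. Clearing denominators and dividing by $h^2$, the equation $\phi(t) = h^2$ is equivalent to $a(1 + 1/h^2)\,t^2 = (1-t)(1-bt)$, which rearranges to the quadratic
$$\bigl(a(1 + 1/h^2) - b\bigr)\,t^2 + (b + 1)\,t - 1 = 0.$$
A routine computation of the discriminant gives $(b-1)^2 + 4a(1 + 1/h^2)$, and rationalizing the positive root (multiplying numerator and denominator by the conjugate of the numerator) yields exactly the expression $R_h$ stated in \eqref{eq:first-local-Ehrlich-var-initial-condition}. Since $\phi$ is strictly increasing on $[0,R]$ with $\phi(R) = 1 > h^2$, this root satisfies $R_h \in (0, R)$.

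With this established, the rest is a monotonicity argument. Given $x^{(0)}$ with $E(x^{(0)}) < R_h$, I note that $E(x^{(0)}) < R_h < R$, so hypothesis \eqref{eq:first-local-Ehrlich-initial-condition} of Corollary~\ref{cor:local-Ehrlich} is satisfied. Applying that corollary yields the well-definedness and convergence together with the estimates \eqref{eq:local-Ehrlich-cor-error-estimate}, where $\lambda = \phi(E(x^{(0)}))$. Strict monotonicity of $\phi$ then gives $\lambda < \phi(R_h) = h^2$, and substituting this bound into both estimates in \eqref{eq:local-Ehrlich-cor-error-estimate} produces exactly the bounds \eqref{eq:first-local-Ehrlich-var-error-estimate} after the simplifications $\lambda^{N(2N+1)^k} \le h^{2N(2N+1)^k}$ and $\lambda^{((2N+1)^k - 1)/2} \le h^{(2N+1)^k - 1}$.

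The only nontrivial step is the quadratic computation identifying $R_h$ as the relevant root of $\phi(t) = h^2$; everything after that is a direct application of Corollary~\ref{cor:local-Ehrlich} combined with monotonicity of $\phi$, so I do not expect any real obstacle.
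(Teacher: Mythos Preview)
Your proposal is correct and follows exactly the approach indicated by the paper: the authors simply remark that solving $\phi(t)=h^{2}$ in $(0,R)$ yields $R_h$ and that Corollary~\ref{cor:first-local-Ehrlich-var} is then an equivalent reformulation of Corollary~\ref{cor:local-Ehrlich}, which is precisely the quadratic computation plus monotonicity argument you carried out. You have supplied the details the paper omits, and there is no gap.
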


\begin{remark}
Corollary~\ref{cor:first-local-Ehrlich-var} is an improvement of the
result of Kjurkchiev and Andreev \cite{KA87} (see Theorem~\ref{thm:Kjurkchiev-Andreev} above).
Suppose that a vector ${x^{(0)} \in \Kset^n}$ satisfies \eqref{eq:Kjurkchiev-Andreev-initial-condition}.
It is easy to show that condition \eqref{eq:Kjurkchiev-Andreev-h-c} is equivalent to the following one:
\[
0 < c < \min \left\{ \frac{\delta}{2(1 + (2n - 1)h)},\frac{2 \delta}{3 + 2 h + \sqrt{4(3 n - 2) h^2 + 4 h + 4 n + 1}}  \right\} .
\]
From this, the initial condition \eqref{eq:Kjurkchiev-Andreev-initial-condition} and ${0 < h < 1}$, we obtain
\begin{eqnarray*}
\left \| \frac{x^{(0)} - \xi}{d(\xi)} \right \|_\infty & \le &  \frac{\|x^{(0)} - \xi \|_\infty}{\delta} \le \frac{c h}{\delta} \\
& \le & \frac{2 h}{3 + 2 h + \sqrt{4(3 n - 2) h^2 + 4 h + 4 n + 1}} \\
& \le & \frac{2}{3 + \sqrt{4(3 n - 2) + (4 n + 1)/h^2}} \\
& \le & \frac{2}{3 + \sqrt{4n - 3 + 4(n - 1)/h^2}} \, .
  \end{eqnarray*}
Therefore, $x^{(0)}$ satisfies \eqref{eq:first-local-Ehrlich-var-initial-condition} with $p = \infty$.
Then it follows from Corollary~\ref{cor:first-local-Ehrlich-var} that
the Ehrlich-type method \eqref{eq:Ehrlich-high-order} is well-defined and converges to $\xi$ with error estimates
\eqref{eq:first-local-Ehrlich-var-error-estimate}.
From the second estimate in \eqref{eq:first-local-Ehrlich-var-error-estimate} and \eqref{eq:Kjurkchiev-Andreev-initial-condition},
we get the estimate \eqref{eq:Kjurkchiev-Andreev-error-estimate} which completes the proof.
\end{remark}


\section{Local convergence theorem of the second type}
\label{sec:Local-convergence-theorem-of-the-second-type}

Let ${f \in \Kset[z]}$ be a polynomial of degree $n \ge 2$.
We study the convergence of the Ehrlich-type method \eqref{eq:Ehrlich-high-order} with respect to the  function of initial conditions
${E \colon \mathcal{D} \to \Rset_+}$ defined by
\begin{equation} \label{eq:FIC-second}
E(x) = \left \|\frac{x - \xi}{d(x)} \right \|_p \qquad (1 \le p \le \infty).
\end{equation}

In the previous section, we introduce the functions $\phi_N$, $\varphi_N$ and the real number $R$ with two parameters
${a > 0}$ and ${b \ge 1}$. In this section, we consider a special case of $\phi_N$, $\varphi_N$ and $R$ when ${b = 2}$.
In other words, now we define $R$ by
\begin{equation} \label{eq:R-second}
R =\frac{2}{3 + \sqrt{1 + 8 a}} \, .
\end{equation}
Furthermore, we define the functions $\phi_N$ and $\varphi_N$ by Definitions \ref{def:phi-N} and \ref{def:varphi-N}, respectively,
but with
\begin{equation}\label{eq:phi-N-second}
  \phi_{N +1}(t) = \frac{a t^2 \phi_{N}(t)}{(1 - t)(1 - 2 t) - a t^2\phi_{N}(t)}
\end{equation}
instead of \eqref{eq:phi-N}, where ${a > 0}$ is a constant.

\begin{definition} \label{def:beta-psi-N}
For a given integer $N \ge 1$, we define the increasing function ${\beta_N \colon [0,R] \to [0,1)}$ by
\begin{equation}\label{eq:beta-N}
   \beta_{N} (t) = \frac{a t^2 \phi_{N-1}(t)}{1 - t - a t^2 \phi_{N-1}(t)}
\end{equation}
and we define the decreasing function ${\psi_N \colon [0,R] \to (0,1]}$ as follows
\begin{equation} \label{eq:psi-N}
   \psi_{N} (t) = 1 - 2 t (1 + \beta_{N}(t)) = \frac{(1 - t)(1 - 2 t) - a t^2 \phi_{N-1}(t)}{1 - t - a t^2 \phi_{N-1}(t)} \, .
\end{equation}
\end{definition}

\begin{correctness51}
The functions ${\beta_N}$ and ${\psi_N}$ are well-defined on ${[0, R]}$ since
\begin{equation} \label{eq:denominator-of-beta-N-positive}
1 - t - a t^2 \phi_{N-1}(t) \ge  1 - t - a t^2 > 0 \quad\text{for all } t \in [0,R].
\end{equation}
The monotonicity of ${\beta_N}$ and ${\psi_N}$ is obvious. It remains to prove that ${\beta_N(R) < 1}$ and ${\psi_N(R) > 0}$.
Since ${\phi_{N}(R) = 1}$, we obtain
\[
\beta_N(R) =\frac{a R^2}{1 - R - a R^2} < 1 \quad\text{and}\quad
\psi_{N}(R) = \frac{(1 - R)(1 - 2 R) - a R^2}{1 - R - a R^2} > 0
\]
which completes the proof of the correctness of Definition~\ref{def:beta-psi-N}
\qed
\end{correctness51}

\begin{lemma} \label{lem:beta-psi-N-properties}
Let ${N \ge 1}$. Then:
\begin{enumerate}
  \item $\beta_N$ is a quasi-homogeneous of degree ${2 N}$ on ${[0,R]}$;
  \item ${\beta_N(t) = \phi_N(t) \psi_N(t)}$ for every ${t \in [0,R]}$;
  \item ${\beta_{N+1}(t) \le \beta_N(t)}$ for every ${t \in [0,R]}$;
  \item ${\psi_{N + 1}(t) \ge \psi_N(t)}$ for every ${t \in [0,R]}$.
\end{enumerate}
\end{lemma}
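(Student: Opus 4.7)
My plan is to handle the four claims essentially in reverse order, since (ii) is a direct algebraic identity, both (iii) and (iv) reduce immediately to the monotonicity $\phi_N \le \phi_{N-1}$ already available from Lemma~\ref{lem:phi-N-properties}(iii), and (i) requires the most care.

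For (ii), I would simply substitute the definitions. By the recursion \eqref{eq:phi-N-second}, the denominator $(1-t)(1-2t) - at^2\phi_{N-1}(t)$ appearing in $\phi_N$ is exactly the numerator of $\psi_N$, so these factors cancel in the product $\phi_N(t)\,\psi_N(t)$, leaving $at^2\phi_{N-1}(t)/(1 - t - at^2\phi_{N-1}(t)) = \beta_N(t)$. This requires only that both denominators be positive, which is guaranteed by \eqref{eq:denominator-of-phi-N-positive} and \eqref{eq:denominator-of-beta-N-positive}.

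For (iii), the function $s \mapsto s/(1 - t - s)$ has derivative $(1-t)/(1-t-s)^2 > 0$ and is therefore strictly increasing in $s$ on its natural domain. Lemma~\ref{lem:phi-N-properties}(iii) gives $\phi_N(t) \le \phi_{N-1}(t)$ on $[0,R]$, and \eqref{eq:denominator-of-beta-N-positive} ensures that $at^2\phi_N(t)$ and $at^2\phi_{N-1}(t)$ both stay below $1-t$, so applying the monotone function yields $\beta_{N+1}(t) \le \beta_N(t)$. Claim (iv) then follows instantly from the identity $\psi_N(t) = 1 - 2t(1 + \beta_N(t))$.

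Finally, for (i), I would show $\beta_N(\lambda t) \le \lambda^{2N}\beta_N(t)$ for $\lambda \in [0,1]$ and $t \in [0,R]$ by bounding numerator and denominator of $\beta_N(\lambda t)$ separately. Quasi-homogeneity of $\phi_{N-1}$ of degree $2(N-1)$ (Lemma~\ref{lem:phi-N-properties}(i)) gives $a(\lambda t)^2 \phi_{N-1}(\lambda t) \le \lambda^{2N}\, at^2 \phi_{N-1}(t)$, which handles the numerator. For the denominator, the same estimate together with $\lambda t \le t$ shows that $1 - \lambda t - a(\lambda t)^2\phi_{N-1}(\lambda t) \ge 1 - t - at^2\phi_{N-1}(t) > 0$, so dividing the bounded numerator by the enlarged denominator produces the desired inequality. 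The only (minor) obstacle throughout is keeping the denominators strictly positive so that all fractions and monotonicity arguments are legitimate; this is exactly the content of \eqref{eq:denominator-of-phi-N-positive} and \eqref{eq:denominator-of-beta-N-positive} established in the correctness proofs of Definitions~\ref{def:phi-N} and \ref{def:beta-psi-N}.
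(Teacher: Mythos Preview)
Your proof is correct and follows essentially the same approach as the paper: (ii) by direct substitution, (iii) from Lemma~\ref{lem:phi-N-properties}(iii) via monotonicity of the defining expression, and (iv) from (iii) via the identity $\psi_N(t)=1-2t(1+\beta_N(t))$. The only cosmetic difference is in (i), where the paper factors $\beta_N(t)=t^2\,\phi_{N-1}(t)\,\Phi(t)$ with $\Phi(t)=a/(1-t-at^2\phi_{N-1}(t))$ and invokes the product rule for quasi-homogeneous functions (degrees $2$, $2N-2$, and $0$), whereas you verify the inequality $\beta_N(\lambda t)\le\lambda^{2N}\beta_N(t)$ directly---the two arguments are equivalent.
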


\begin{proof}
The function ${\beta_N}$ can be presented in the form ${\beta_N(t) = t^2 \phi_{N-1}(t) \Phi(t)}$, where 
${\Phi(t) = a / (1 - t -a t^2 \phi_{N-1}(t))}$. Therefore, ${\beta_N}$ is quasi-homogeneous of degree ${2 N}$ on ${[0, R]}$ since it is a product of three quasi-homogeneous functions on ${[0, R]}$ of degree $2$, ${2N-2}$ and $0$.  
From the definitions of the functions $\phi_N$, $\psi_N$ and $\beta_N$, we get
\[
  \phi_N(t) \psi_N(t) = \frac{a t^2 \phi_{N - 1}(t)}{(1 - t)(1 - 2 t) - a t^2\phi_{N - 1}(t)} \,
	\frac{(1 - t)(1 - 2 t) - a t^2 \phi_{N - 1}(t)}{1 - t - a t^2 \phi_{N - 1}(t)} = \beta_{N}(t).
\]
Claim (iii) follows from Lemma~\ref{lem:phi-N-properties}(iii) and \eqref{eq:beta-N}. 
Claim (iv) follows from (iii) and \eqref{eq:psi-N}.
\end{proof}

\begin{lemma} \label{lem:Ehrlich-iterated-contraction-second-FIC}
Let ${f \in \Kset[z]}$ be a polynomial of degree ${n \ge 2}$ which has  only simple zeros in $\Kset$,
${\xi \in \Kset^n}$  a root-vector of $f$, ${N \ge 1}$ and ${1 \le p \le \infty}$. 
Suppose ${x \in \Kset^n}$ is a vector with distinct components such that
\begin{equation} \label{eq:Ehrlich-second-FIN-condition}
E(x) = \left \| \frac{x - \xi}{d(x)} \right \|_p  \le R = \frac{2}{3 + \sqrt{1 + 8a}}\, ,
\end{equation}
where the function ${E \colon \mathcal{D} \to \Rset_+}$ is defined by \eqref{eq:FIC-second} and ${a = (n - 1) ^{1/q}}$.
Then $f$ has only simple zeros in $\Kset$,
 \begin{equation} \label{eq:Ehrlich-second-local-relations}
  x \in D_N, \, \|T^{(N)} (x) - \xi\| \preceq \beta_N(E(x)) \|x - \xi\| \quad\text{and}\quad  E(T^{(N)}(x)) \le \varphi_N(E(x)).
\end{equation}
Besides, the vector $T^{(N)}(x)$ has pairwise distinct components. 
\end{lemma}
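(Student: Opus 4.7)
The plan is to prove all four conclusions simultaneously by induction on $N \ge 1$. I first observe that $\xi$ has distinct components (so $f$ has only simple zeros in $\Kset$) by applying Lemma~\ref{lem:u-v-inequality-distinct-components} with $u = \xi$, $v = x$, since ${E(x) \le R < 1/2}$. Throughout, I set $t = E(x)$.

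For the base case $N = 1$, I would substitute $T^{(0)}(x) = x$ into Lemma~\ref{lem:Ehrlich-high-order-map}. Applying Lemma~\ref{lem:u-v-inequalities-basic} with $u = \xi$, $v = x$ yields $|x_i - \xi_j| \ge (1-t)|x_i - x_j|$. Combined with $|x_i - x_j|^2 \ge d_i(x) d_j(x)$ and H\"older's inequality on $\sum_{j \ne i} |x_j - \xi_j|/d_j(x) \le a t$, this gives $|\sigma_i| \le at^2/(1-t)$, and Lemma~\ref{lem:Ehrlich-high-order-map}(ii) then produces $|T^{(1)}_i(x) - \xi_i| \le \beta_1(t)|x_i - \xi_i|$.

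For the inductive step, I assume the four conclusions hold for some $N \ge 1$ and derive them for $N+1$. The crucial ingredient is Lemma~\ref{lem:u-v-inequalities-2} with $u = T^{(N)}(x)$, $v = x$, $\alpha = \beta_N(t)$ (whose hypothesis is the inductive pointwise bound), giving
\begin{equation*}
|x_i - T_j^{(N)}(x)| \ge \bigl(1 - (1+\beta_N(t))t\bigr)|x_i - x_j|.
\end{equation*}
Substituting this, the bound on $|x_i - \xi_j|$ above, $|x_i - x_j|^2 \ge d_i(x) d_j(x)$, and the inductive pointwise estimate into Lemma~\ref{lem:Ehrlich-high-order-map}(i), then applying H\"older, yields
\begin{equation*}
|\sigma_i| \le \frac{a t^2 \beta_N(t)}{(1-t)\bigl(1 - (1+\beta_N(t))t\bigr)} \le \frac{a t^2 \phi_N(t)}{1-t}.
\end{equation*}
The second inequality reduces via $\beta_N = \phi_N \psi_N$ and $\psi_N(t) = 1 - 2t(1+\beta_N(t))$ to the trivial $t(1+\beta_N(t)) \ge 0$. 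Since the final expression is strictly less than $1$ on $[0,R]$, the Ehrlich correction denominator is nonzero, so $x \in D_{N+1}$, and Lemma~\ref{lem:Ehrlich-high-order-map}(ii) together with $|\sigma_i|/(1-|\sigma_i|) \le at^2\phi_N(t)/(1 - t - at^2\phi_N(t)) = \beta_{N+1}(t)$ gives $|T^{(N+1)}_i(x) - \xi_i| \le \beta_{N+1}(t)|x_i - \xi_i|$.

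To finish, I would apply Lemma~\ref{lem:u-v-inequalities-3} with $u = T^{(N+1)}(x)$, $v = x$, $\alpha = \beta_{N+1}(t)$, which yields
\begin{equation*}
|T^{(N+1)}_i(x) - T^{(N+1)}_j(x)| \ge \bigl(1 - 2^{1/q}(1+\beta_{N+1}(t))t\bigr)|x_i - x_j| \ge \psi_{N+1}(t)|x_i - x_j|,
\end{equation*}
the last step using $2^{1/q} \le 2$. Thus $d_i(T^{(N+1)}(x)) \ge \psi_{N+1}(t) d_i(x) > 0$, so $T^{(N+1)}(x)$ has pairwise distinct components; dividing the pointwise bound by $d_i(T^{(N+1)}(x))$, invoking $\beta_{N+1}/\psi_{N+1} = \phi_{N+1}$ from Lemma~\ref{lem:beta-psi-N-properties}(ii), and taking the $p$-norm gives $E(T^{(N+1)}(x)) \le \phi_{N+1}(t) t = \varphi_{N+1}(t)$, completing the induction. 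The main obstacle is arranging this two-step bound on $|\sigma_i|$: one must use the sharper inductive estimate $\beta_N$ rather than $\phi_N$ inside Lemma~\ref{lem:u-v-inequalities-2} so that the resulting estimate collapses to the $1/(1-t)$ denominator and produces $\beta_{N+1}$ (rather than the weaker $\phi_{N+1}$ of Lemma~\ref{lem:Ehrlich-iterated-contraction}) in the conclusion.
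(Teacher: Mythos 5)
Your proposal is correct, and it follows the paper's overall skeleton: induction on $N$ carrying the membership $x \in D_N$ and the pointwise contraction $\|T^{(N)}(x)-\xi\| \preceq \beta_N(E(x))\|x-\xi\|$, the $\sigma_i$-decomposition of Lemma~\ref{lem:Ehrlich-high-order-map}, and the final passage to $E(T^{(N)}(x)) \le \varphi_N(E(x))$ and to distinct components via Lemma~\ref{lem:u-v-inequalities-3} and the identity $\beta_N/\psi_N = \phi_N$ (Lemma~\ref{lem:beta-psi-N-properties}(ii)). You differ in two genuine respects. First, you prove the base case $N=1$ directly by putting $T^{(0)}(x)=x$ into Lemma~\ref{lem:Ehrlich-high-order-map} and estimating $|\sigma_i| \le a t^2/(1-t)$, whereas the paper simply quotes its $N=1$ statement from \cite{Pro15c}; your version makes the lemma self-contained. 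Second, and more interestingly, in the inductive step the paper applies Lemma~\ref{lem:u-v-inequalities-2} with $\alpha=1$, gets $|\sigma_i| \le a t^2 \beta_N(t)/\bigl((1-t)(1-2t)\bigr)$, and then must invoke $\psi_N \le 1$, the recursion \eqref{eq:phi-N-second}, and the monotonicity $\psi_{N+1} \ge \psi_N$ (Lemma~\ref{lem:beta-psi-N-properties}(iv)) to arrive at $\beta_{N+1} = \phi_{N+1}\psi_{N+1}$; you instead use the sharper $\alpha = \beta_N(t)$, and the exact identities $\beta_N = \phi_N \psi_N$ and $\psi_N(t) = 1 - 2t(1+\beta_N(t))$ collapse your estimate to $|\sigma_i| \le a t^2 \phi_N(t)/(1-t)$, after which $|\sigma_i|/(1-|\sigma_i|) \le \beta_{N+1}(t)$ drops straight out of the definition \eqref{eq:beta-N}. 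This is a slightly slicker route that bypasses Lemma~\ref{lem:beta-psi-N-properties}(iv); the only point you should state explicitly is that $1 - t(1+\beta_N(t)) > 0$ on $[0,R]$ (immediate from $\psi_N(t) > 0$), which is needed both to divide in that comparison and to conclude $x \,\#\, T^{(N)}(x)$ from your positive lower bound on $|x_i - T_j^{(N)}(x)|$, the latter being part of membership in $D_{N+1}$. The remaining glossed items (the case $f(x_i)=0$, where the Ehrlich denominator is $f'(x_i) \ne 0$ and $x_i = \xi_i$ forces $T_i^{(N+1)}(x) = \xi_i$) are treated just as briefly in the paper and are harmless.
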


\begin{proof}
It follows from \eqref{eq:Ehrlich-second-FIN-condition} and $R < 1/2$ that ${E(x) < 1/2}$. 
Then it follows from Lemma~\ref{lem:u-v-inequality-distinct-components} that the vector $\xi$ has distinct components, 
which means that $f$ has only simple zeros in $\Kset$.
We divide the proof into two steps.

\begin{step}
In this step, we prove ${x \in D_{N}}$ and the first inequality in
\eqref{eq:Ehrlich-second-local-relations} by induction on $N$.
If ${N = 1}$, the proof of the claims can be found in \cite{Pro15c}.
Assume that ${x \in D_{N}}$ and the first inequality in
\eqref{eq:Ehrlich-second-local-relations} hold for some ${N \ge 1}$.

First we show that ${x \in D_{N + 1}}$ i.e.
${x \, \# \, T^{(N)}(x)}$ and \eqref{eq:correction-function-denominator} holds for every ${i \in I_n}$.
It follows from the first inequality in \eqref{eq:Ehrlich-second-local-relations} that \eqref{eq:inequality-vectors-cone-norm-h-condition} holds with ${v = x}$, ${u = T^{(N)}(x)}$ and ${\alpha = 1}$.
Therefore by Lemma~\ref{lem:u-v-inequalities-2}, \eqref{eq:Ehrlich-second-FIN-condition} and ${R < 1/2}$, we obtain
\begin{equation} \label{eq:x-TN-inequality-second}
     |x_i - T_j^{(N)}(x)| > \left( 1 - 2 \left\| \frac{x - \xi}{d(x)} \right\|_p \right) |x_i - x_j| \ge (1 -  2 E(x)) \, d_j(x) > 0
\end{equation}
for every ${j \ne i}$.
Consequently, ${x \, \# \, T^{(N)}(x)}$.
It remains to prove \eqref{eq:correction-function-denominator} for every ${i \in I_n}$.
Let ${i \in I_n}$ be fixed.
We shall consider only the non-trivial case ${f(x_i) \ne 0}$.
In this case \eqref{eq:correction-function-denominator} is equivalent to \eqref{eq:correction-function-denominator-second-case}.
On the other hand, it follows from Lemma~\ref{lem:Ehrlich-high-order-map}(i) that \eqref{eq:correction-function-denominator-second-case}
is equivalent to ${\sigma_i \ne 1}$, where ${\sigma_i}$ is defined by \eqref{eq:sigma-definition}.
By Lemma~\ref{lem:u-v-inequalities-basic} with ${u = \xi}$ and ${v = x}$ and \eqref{eq:Ehrlich-second-FIN-condition}, we get
\begin{equation} \label{eq:x-xi-inequality-2}
    |x_i - \xi_j| \ge \left( 1 -  \left\| \frac{x - \xi}{d(x)} \right\|_p \right) |x_i - x_j|
 = (1 -  E(x)) |x_i - x_j| \ge (1 - E(x)) \, d_i(x) > 0,
\end{equation}
for every ${j \ne i}$.
Hence, we obtain ${x \, \# \, \xi}$.
From induction hypothesis, we get
\begin{equation} \label{eq:Ehrlich-contraction-map-inequality-j-coordinate-second}
     |T^{(N)}_i (x) - \xi_i | \le \beta_N(E(x)) |x_i - \xi_i |.
\end{equation}
Combining the triangle inequality in $\Kset$, \eqref{eq:x-xi-inequality-2}, \eqref{eq:x-TN-inequality-second} and \eqref{eq:Ehrlich-contraction-map-inequality-j-coordinate-second}, we obtain
\begin{eqnarray*}
|\sigma_i|& \le &|x_i - \xi_i| \sum_{j \ne i}\frac{|T_j ^{(N)} (x) - \xi_j|}{|x_i - \xi_j| \,
| x_i - T_j ^{(N)} (x) |} \\
& \le & \frac{1}{(1 - E(x)) (1 - 2 E(x))} \frac{|x_i - \xi_i|}{d_i(x)}
\sum_{j \ne i} \frac {|T_j ^{(N)} (x) - \xi_j|}{d_j(x)} \\
& \le & \frac{\beta_N(E(x))}{(1 - E(x)) (1 - 2 E(x))} \frac{|x_i - \xi_i|}{d_i(x)}
\sum_{j \ne i} \frac {|x_j - \xi_j|}{d_j(x)} \, .
\end{eqnarray*}
which, using H\"older's inequality, yields
\begin{equation} \label{eq:sigma-inequality-second}
|\sigma_i| \le \frac{a E(x)^2 \beta_N(E(x))}{(1 - E(x)) (1 - 2 E(x))} \, .
\end{equation}
From  this and \eqref{eq:Ehrlich-second-FIN-condition}, we deduce
\[
 |\sigma_i| \le \frac{a E(x)^2}{(1 - E(x)) (1 - 2 E(x))} < 1,
\]
which yields ${\sigma_i \ne 1}$, and so \eqref{eq:correction-function-denominator-second-case} holds.
Thus we prove that $ x \in D_{N + 1}$.
 
Now we have to prove that the first inequality in \eqref{eq:Ehrlich-second-local-relations} is satisfied for ${N + 1}$, 
which is equivalent to
\begin{equation} \label{eq:Ehrlich-contraction-map-inequality-i-coordinate-second}
 |T^{(N + 1)}_i(x) - \xi_i| \le  \beta_{N + 1}(E(x)) |x_i - \xi_i|  \quad\text{ for all }\, i \in I_n.
\end{equation}
Let ${i \in I_n}$ be fixed. If ${x_i = \xi_i}$, then  ${T^{(N + 1)}_i(x) = \xi_i}$  and the inequality 
\eqref{eq:Ehrlich-contraction-map-inequality-i-coordinate-second} becomes an equality.
Suppose ${x_i \ne \xi_i}$.
It follows from Lemma~\ref{lem:Ehrlich-high-order-map}(ii), the triangle inequality in $\Kset$ and the estimate 
\eqref{eq:sigma-inequality-second} that
\begin{eqnarray*}
|T^{(N + 1)}_i(x) - \xi_i| & = & \frac{|\sigma_i|}{|1 -\sigma_i|} |x_i - \xi_i| \le \frac{|\sigma_i|}{1 - |\sigma_i|} |x_i - \xi_i| \\
& \le & \frac{ a E(x)^2 \beta_N(E(x))}{(1 - E(x))(1 - 2 E(x)) - a E(x)^2 \beta_N(E(x))} |x_i - \xi_i|.
\end{eqnarray*}
From this inequality, Lemma~\ref{lem:beta-psi-N-properties}(ii), ${\psi_N(t) \le 1}$, \eqref{eq:phi-N-second} and 
Lemma~\ref{lem:beta-psi-N-properties}(iv), we obtain
\begin{eqnarray*}
|T^{(N + 1)}_i(x) - \xi_i| & \le & \frac{ a E(x)^2 \phi_N(E(x)) \psi_N(E(x))}{(1 - E(x))(1 - 2 E(x)) - a E(x)^2 \phi_N(E(x)) } | x_i - \xi_i | \\
& \le & \phi_{N + 1}(E(x)) \psi_N(E(x)) | x_i - \xi_i | \nonumber\\
& \le & \phi_{N + 1}(E(x)) \psi_{N + 1}(E(x)) | x_i - \xi_i |
 =  \beta_{N + 1}(E(x)) |x_i - \xi_i|
\end{eqnarray*}
which proves \eqref{eq:Ehrlich-contraction-map-inequality-i-coordinate-second}.
This completes the induction.
\end{step}

\begin{step}
In this step we prove the second inequality in \eqref{eq:Ehrlich-second-local-relations} 
and that $T^{(N)}(x)$ has distinct components.
First inequality in \eqref{eq:Ehrlich-second-local-relations} allow us to apply 
Lemma~\ref{lem:u-v-inequalities-3} with ${u = T^{(N)}(x)}$, ${v = x}$ and ${\alpha = \beta_N(E(x))}$.
By Lemma~\ref{lem:u-v-inequalities-3} and \eqref{eq:psi-N}, we deduce
\[
 |T_i^{(N)}(x) - T_j^{(N)}(x)| \ge (1 - 2^{1/q} E(x)(1 + \beta_N(E(x)))) |x_i - x_j| \ge \psi_N(E(x)) |x_i - x_j|.
\]
By taking the minimum over all ${j \in I_n}$ such that ${j \ne i}$, we obtain
\begin{equation} \label{eq:d-Tx-inequality-psi}
 d_i(T^{(N)}(x)) \ge \psi_N(E(x)) d_i(x) >0
\end{equation}
which implies that $T^{(N)}(x)$ has distinct components. 
It follows from \eqref{eq:Ehrlich-contraction-map-inequality-j-coordinate-second}, \eqref{eq:d-Tx-inequality-psi} and 
Lemma~\ref{lem:beta-psi-N-properties}(ii) that
\[
   \frac{|T_i^{(N)}(x) - \xi_i|}{ d_i(T^{(N)}(x)) }
   \le \frac{\beta_N(E(x))}{\psi_N(E(x))} \frac{|x_i - \xi_i|}{d_i(x)}
   = \phi_{N}(E(x)) \frac{|x_i - \xi_i|}{d_i(x)} \, .
\]
By taking the $p$-norm, we obtain
\[
   E(T^{(N)} (x)) \le \phi_{N}(E(x))\, E(x) = \varphi_N(E(x))
\]
\end{step}
which proves the second inequality in \eqref{eq:Ehrlich-second-local-relations}. This completes the proof.
\end{proof}

Now we are able to state the main result of this section.
In the case when ${N = 1}$ and ${p = \infty}$ this result reduces to Theorem~\ref{thm:Proinov-Ehrlich-local-second}.

\begin{theorem} \label{thm:second-local-Ehrlich}
Let ${f \in \Kset[z]}$ be a polynomial of degree ${n \ge 2}$ which splits over $\Kset$,
${\xi \in \Kset^n}$ be a root-vector of $f$, ${N \ge 1}$ and ${1 \le p \le \infty}$.
Suppose ${x^{(0)} \in \Kset^n}$ is an initial guess with distinct components such that
\begin{equation} \label{eq:second-local-Ehrlich-initial-condition}
E(x^{(0)}) = \left \| \frac{x^{(0)} - \xi}{d(x^{(0)})} \right \|_p \le R = \frac{2}{3 + \sqrt{1 + 8 a}},
\end{equation}
where the function $E$ is defined by \eqref{eq:FIC-second} and  ${a = (n - 1) ^{1/q}}$.
Then $f$ has only simple zeros in $\Kset$ and  the Ehrlich-type iteration \eqref{eq:Ehrlich-high-order} is well-defined and converges to 
$\xi$ with error estimates
\begin{equation}\label{eq:second-local-Ehrlich-error-estimate}
\|x^{(k+1)} - \xi\| \preceq \theta \lambda^{(2 N + 1)^k} \, \|x^{(k)} - \xi\|
\quad\text{and}\quad
  \|x^{(k)} - \xi\| \preceq \theta^ k \lambda^{((2 N + 1)^k - 1) /2N} \|x^{(0)} - \xi\|,
\end{equation}
for all ${k \ge 0}$, where ${\lambda = \phi_N(E(x^{(0)}))}$, ${\theta = \psi_N(E(x^{(0)}))}$.
Moreover, the method is convergent with order ${2N + 1}$ provided that ${E(x^{(0)}) < R}$.
\end{theorem}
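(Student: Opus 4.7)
The plan is to derive Theorem~\ref{thm:second-local-Ehrlich} as a direct application of the abstract fixed-point Theorem~\ref{thm:general-convergence-theorem} to the iteration function ${T^{(N)} \colon D_N \subset \Kset^n \to \Kset^n}$ of Definition~\ref{def:iteration-functions}, with the function of initial conditions ${E \colon \mathcal{D} \to \Rset_+}$ defined by \eqref{eq:FIC-second} and with the interval ${J = [0, R]}$, where $R$ is given by \eqref{eq:R-second}. The natural choices will be $r = 2N+1$, gauge function $\varphi_N$, control function $\beta_N$, and factorization $\beta_N = \phi_N \psi_N$ as in Lemma~\ref{lem:beta-psi-N-properties}(ii).

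First I would invoke Lemma~\ref{lem:Ehrlich-iterated-contraction-second-FIC}, which is the technical core already proved: it furnishes simultaneously the estimate ${E(T^{(N)}(x)) \le \varphi_N(E(x))}$ (so $E$ is a function of initial conditions for $T^{(N)}$ with gauge $\varphi_N$ on $J$) and the cone-norm inequality ${\|T^{(N)}(x) - \xi\| \preceq \beta_N(E(x)) \|x - \xi\|}$ (so $T^{(N)}$ is an iterated contraction at $\xi$ with control function $\beta_N$). It also delivers, as a by-product, the fact that $f$ has only simple zeros in $\Kset$ and that each iterate retains distinct components, so the iterates all lie in $D_N$.

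Next I would verify the structural hypotheses on the control function required by Theorem~\ref{thm:general-convergence-theorem}. By Definition~\ref{def:varphi-N} we have ${\varphi_N(t) = t \phi_N(t)}$, which is condition \eqref{eq:Proinov-phi-property}. By Lemma~\ref{lem:beta-psi-N-properties}(ii), ${\beta_N(t) = \phi_N(t) \psi_N(t)}$, giving condition \eqref{eq:Proinov-beta-property}. The remaining nontrivial point is to check that ${t \, \beta_N(t)}$ is a strict gauge function of order $2N+1$ on $J$. For this I would use Lemma~\ref{lem:beta-psi-N-properties}(i), which states that $\beta_N$ is quasi-homogeneous of degree $2N$, so that ${t \, \beta_N(t)}$ is quasi-homogeneous of degree $2N+1$; since ${\beta_N(R) < 1}$ (established in the proof of correctness of Definition~\ref{def:beta-psi-N}), $R$ is a fixed point of ${t \mapsto t \, \beta_N(t)}$ only in the degenerate sense and in fact ${t \, \beta_N(t) < t}$ on ${(0, R]}$, so Lemma~\ref{lem:gauge-function-condition} delivers the strict gauge property of order $r = 2N+1$.

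Finally, since $\xi$ is a root-vector we have ${E(\xi) = 0 \in J}$, and the assumption \eqref{eq:second-local-Ehrlich-initial-condition} gives ${E(x^{(0)}) \in J}$; combining Lemma~\ref{lem:Ehrlich-iterated-contraction-second-FIC} with Theorem~\ref{thm:initial-point-test} shows that ${x^{(0)}}$ is an initial point of $T^{(N)}$. All hypotheses of Theorem~\ref{thm:general-convergence-theorem} are then satisfied, and its conclusions yield precisely the well-definedness of \eqref{eq:Ehrlich-high-order} and the error estimates \eqref{eq:second-local-Ehrlich-error-estimate} with ${\lambda = \phi_N(E(x^{(0)}))}$ and ${\theta = \psi_N(E(x^{(0)}))}$. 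The last sentence, asserting $Q$-order ${2N+1}$ when ${E(x^{(0)}) < R}$, follows because strict inequality gives ${\lambda = \phi_N(E(x^{(0)})) < \phi_N(R) = 1}$, so the factor ${\theta \lambda^{(2N+1)^k}}$ in the first estimate of \eqref{eq:second-local-Ehrlich-error-estimate} decays at rate $2N+1$. The only delicate point of the plan is the strict-gauge verification for ${t \, \beta_N(t)}$; everything else is bookkeeping that reduces the problem to Lemma~\ref{lem:Ehrlich-iterated-contraction-second-FIC} and the abstract Theorem~\ref{thm:general-convergence-theorem}.
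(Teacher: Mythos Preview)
Your proposal is correct and follows essentially the same route as the paper: apply Theorem~\ref{thm:general-convergence-theorem} to $T^{(N)}$ with $E$ from \eqref{eq:FIC-second} on $J=[0,R]$, use Lemma~\ref{lem:Ehrlich-iterated-contraction-second-FIC} to verify both the gauge-function property and the iterated-contraction property, check \eqref{eq:Proinov-beta-property}--\eqref{eq:Proinov-phi-property} via Lemma~\ref{lem:beta-psi-N-properties}(ii) and Definition~\ref{def:varphi-N}, and obtain that $x^{(0)}$ is an initial point from Theorem~\ref{thm:initial-point-test}. Two small remarks: (a) your invocation of Lemma~\ref{lem:gauge-function-condition} for $t\,\beta_N(t)$ is misplaced, since $R$ is \emph{not} a fixed point of $t\mapsto t\,\beta_N(t)$; however your preceding observation that $\beta_N(t)<1$ on $[0,R]$ (hence $t\,\beta_N(t)<t$ on $(0,R]$) together with quasi-homogeneity of degree $2N+1$ already gives the strict gauge property directly from Definition~\ref{def:gauge-function-high-order}, so no harm is done; (b) the paper spells out the initial-point verification a bit more carefully, applying Lemma~\ref{lem:Ehrlich-iterated-contraction-second-FIC} a second time to the vector $T^{(N)}(x)$ to conclude $T^{(N)}(x)\in D_N$, which is exactly what Theorem~\ref{thm:initial-point-test} requires---your outline implicitly relies on this step.
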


\begin{proof}
We apply Theorem~\ref{thm:general-convergence-theorem} to the iteration function
${T^{(N)} \colon D_N \subset \Kset^n \to \Kset^n}$ together with the function ${E \colon D_N \to \Rset_+}$ defined by 
\eqref{eq:FIC-second}.

It follows from Lemma~\ref{lem:Ehrlich-iterated-contraction-second-FIC} and Lemma~\ref{lem:phi-N-properties}(v) 
that $E$ is a function of initial conditions of ${T^{(N)}}$ with gauge function $\varphi_N$ of order ${r = 2 N + 1}$ on the interval 
${J = [0, R]}$.

From Lemma~\ref{lem:Ehrlich-iterated-contraction-second-FIC}, we get that ${T^{(N)}}$ is an iterated contraction at $\xi$ 
with respect to $E$ and with control function ${\beta_N}$. 
Also, it is easy to see that the functions $\beta_N$, $\phi_N$, $\psi_N$ and $\varphi_N$ have the properties 
\eqref{eq:Proinov-beta-gauge-property}, \eqref{eq:Proinov-beta-property} and \eqref{eq:Proinov-phi-property}.  

It follows from Lemma~\ref{lem:Ehrlich-iterated-contraction-second-FIC} that ${x^{(0)} \in D_N}$. 
According to Theorem~\ref{thm:initial-point-test} to prove that ${x^{(0)}}$ is an initial point of ${T^{(N)}}$ 
it is sufficient to prove that
\begin{equation} \label{eq:initial-point-test}
	x \in D_N \,\, \text{ and } \,\, E(x) \in J \,\, \Rightarrow \,\, T^{(N)}(x) \in D_N \, .
\end{equation}
From ${x \in D_N}$, we have ${T^{(N)}(x) \in \Kset^n}$.
By Lemma~\ref{lem:Ehrlich-iterated-contraction-second-FIC}, ${T^{(N)}}$ has distinct components and 
${E(T^{(N)}(x)) \le \varphi_N(E(x))}$. 
The last inequality yields ${E(T^{(N)}(x)) \in J}$ since ${\varphi \colon J \to J}$ and ${E(x) \in J}$.
Thus we have both ${T^{(N)}(x) \in \mathcal{D}}$ and ${E(T^{(N)}(x)) \in J}$.
Applying Lemma~\ref{lem:Ehrlich-iterated-contraction-second-FIC} to the vector ${T^{(N)}(x)}$, we get ${T^{(N)}(x) \in D_N}$
which proves \eqref{eq:initial-point-test}.
Therefore, ${x^{(0)}}$ is an initial point of ${T^{(N)}}$.

Now the statement of Theorem~\ref{thm:second-local-Ehrlich} follows from  Theorem~\ref{thm:general-convergence-theorem}.
\end{proof}


\section{Semilocal convergence theorem}
\label{sec:semilocal-convergence-theorem}

In this section we establish semilocal convergence theorems for Ehrlich-type methods \eqref{eq:Ehrlich-high-order} for finding all zeros of a polynomial simultaneously.
We study the convergence of these methods with respect to the  function of initial conditions
${E \colon \mathcal{D} \to \Rset_+}$  defined by
\begin{equation} \label{eq:FIC-third}
E_f(x) =  \left \| \frac{W_f(x)}{d(x)} \right \|_p \qquad (1 \le p \le \infty).
\end{equation}

Recently Proinov \cite{Pro15b} has shown that there is a relationship between local and semilocal theorems for simultaneous root-finding methods. It turns out that from any local convergence theorem for a simultaneous method one can obtain as a consequence a semilocal theorem for the same method. In particular, from Theorem~\ref{thm:first-local-Ehrlich} we can obtain a semilocal convergence theorem for Ehrlich-type methods \eqref{eq:Ehrlich-high-order} under computationally verifiable initial conditions.
For this purpose we need the following result.

\begin{theorem}[Proinov \cite{Pro15b}] \label{prop:semilocal-theorem-second-kind}
Let ${f \in \Kset[z]}$ be a polynomial of degree ${n \ge 2}$.
Suppose ${x \in \Kset^n}$ is an initial guess with distinct components such that
\begin{equation}\label{eq:semilocal-theorem-initial-condition-second-kind}
   \left \| \frac{W_f(x)}{d(x)} \right \|_p \le \frac{R (1 - R)}{1 + (a - 1) R}
\end{equation}
for some ${1 \le p \le \infty}$ and ${0 < R \le 1/(1 + \sqrt{a})}$, where ${a = (n - 1) ^ {1/q}}$.
In the case ${n = 2}$ and ${p = \infty}$ we assume that inequality in
\eqref{eq:semilocal-theorem-initial-condition-second-kind} is strict.
Then $f$ has only simple zeros in $\Kset$ and there exists a root-vector ${\xi \in \Kset^n}$ of $f$ such that
\begin{equation} \label{eq:semilocal-error-estimate-initial-condition}
	\|x - \xi\| \preceq \alpha (E_f(x)) \, \|W(x)\| \quad\text{ and }\quad
	\left \| \frac{x - \xi}{d(x)} \right \|_p \le R,
\end{equation}	
where the real function $\alpha$ is defined by
\begin{equation} \label{eq:function-alpha-definition}
\alpha(t) = \frac{2}{1 - (a - 1) t + \sqrt{(1 - (a - 1) t)^2 - 4 t}} \, .
\end{equation}
If the inequality \eqref{eq:semilocal-theorem-initial-condition-second-kind} is strict,
then the second inequality in \eqref{eq:semilocal-error-estimate-initial-condition} is strict too.
\end{theorem}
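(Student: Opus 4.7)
The plan is to exploit the algebraic identity
\[
W_i(x) = (x_i - \xi_i) \prod_{j \ne i} \frac{x_i - \xi_j}{x_i - x_j} = (x_i - \xi_i) \prod_{j \ne i}\bigl(1 + s_j\bigr), \qquad s_j := \frac{x_j - \xi_j}{x_i - x_j},
\]
valid for any root-vector $\xi$ of $f$, together with a self-consistency argument. Since $\Kset$ is algebraically closed, $f$ admits at least one root-vector $\eta \in \Kset^n$; one first chooses a permutation $\pi$ of $I_n$ minimizing $\|x - \eta \circ \pi\|_p$ and sets $\xi_i := \eta_{\pi(i)}$. This sidesteps any topological fixed-point argument, which would be unavailable for a general algebraically closed field.

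With $\xi$ fixed, I would derive a coordinate-wise bound by combining: the product inequality $\prod_{j \ne i}|1 + s_j| \ge 1 - \sum_{j \ne i}|s_j|$, a lower bound on the denominators $|x_i - x_j|$ via Lemma~\ref{lem:u-v-inequalities-basic}, and H\"older's inequality applied over the $(n-1)$ indices $j \ne i$ with conjugate exponents $p, q$ to produce the constant $a = (n - 1)^{1/q}$. This should yield
\[
|x_i - \xi_i| \le \frac{|W_i(x)|}{1 - (a - 1)\varepsilon - E}, \qquad E := \left\|\frac{x - \xi}{d(x)}\right\|_p, \qquad \varepsilon := \left\|\frac{W_f(x)}{d(x)}\right\|_p.
\]
Dividing by $d_i(x)$ and taking $p$-norms collapses this into the self-consistent scalar inequality
\[
E^2 - \bigl(1 - (a - 1)\varepsilon\bigr)\,E + \varepsilon \le 0,
\]
whose smaller root is precisely $\alpha(\varepsilon)\,\varepsilon$, with $\alpha$ as in \eqref{eq:function-alpha-definition}.

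The hypothesis $\varepsilon \le R(1 - R)/(1 + (a - 1)R)$ with $R \le 1/(1 + \sqrt{a})$ is tailored so that the discriminant $(1 - (a - 1)\varepsilon)^2 - 4\varepsilon$ is nonnegative and $R$ sits between the two roots of the quadratic; hence $E \le R$, and substituting back produces the cone-norm estimate $\|x - \xi\| \preceq \alpha(E_f(x))\,\|W_f(x)\|$. Because $R \le 1/(1 + \sqrt{a}) \le 1/2$, Lemma~\ref{lem:u-v-inequality-distinct-components} then forces $\xi$ to have distinct components, so $f$ has only simple zeros in $\Kset$.

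The principal obstacle is ensuring that the minimizing permutation $\pi$ actually lands $E$ on the \emph{smaller}-root branch of the quadratic rather than on the spurious larger-root branch. I would resolve this via a continuity/deformation argument: connect $x$ by a path to a configuration with $\varepsilon$ arbitrarily small (where the two roots separate cleanly), and track the correct branch back along this path using continuity of the zeros of $f$ under perturbation. The boundary case $n = 2$, $p = \infty$ corresponds to $a = 1$, where $R$ can equal $1/2$ and Lemma~\ref{lem:u-v-inequality-distinct-components} fails at the endpoint; this is precisely why the strict form of \eqref{eq:semilocal-theorem-initial-condition-second-kind} is required in that case.
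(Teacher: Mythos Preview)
The paper does not prove Theorem~\ref{prop:semilocal-theorem-second-kind}; it is quoted from \cite{Pro15b} and used as a black box in the proof of Theorem~\ref{thm:semilocal-theorem-Ehrlich-second}. So there is no in-paper argument to compare against directly, and one must evaluate your sketch on its own merits.

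There are two substantive gaps. First, the coordinate bound you display,
\[
|x_i - \xi_i| \;\le\; \frac{|W_i(x)|}{1 - (a - 1)\varepsilon - E},
\]
does not follow from the ingredients you list. The product inequality together with H\"older (using $|x_i - x_j| \ge d_j(x)$) gives only $\prod_{j\ne i}|1+s_j| \ge 1 - aE$, hence $|x_i - \xi_i| \le |W_i(x)|/(1-aE)$; there is no mechanism for $\varepsilon$ to enter the denominator without already assuming the conclusion. Moreover, the scalar inequality you write, $E^2 - (1-(a-1)\varepsilon)E + \varepsilon \le 0$, has the wrong sign for your purpose: since the quadratic opens upward with constant term $\varepsilon>0$, the inequality $\le 0$ places $E$ \emph{between} the two roots, i.e.\ $E \ge \varepsilon\,\alpha(\varepsilon)$, the opposite of what you need. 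The useful direction is $\ge 0$, which leaves precisely the branch-selection problem you identify.

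Second, and more seriously, your proposed resolution of the branch problem --- a continuity/deformation argument along a path in $\Kset^n$ --- directly contradicts your own earlier (correct) observation that topological arguments are unavailable over a general algebraically closed valued field. Even over $\mathbb{C}$ the argument is not specified: which path, and why does it stay in the region where your estimates are valid? The argument in \cite{Pro15b} bypasses both the permutation choice and the branch ambiguity by \emph{constructing} $\xi$ rather than selecting it a priori: the hypothesis $E_f(x) \le R(1-R)/(1+(a-1)R)$ is exactly the initial-condition threshold under which the Weierstrass iteration \eqref{eq:Weierstrass-iteration} started at $x$ is well-defined and convergent, and $\xi$ is produced as its limit. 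The estimates \eqref{eq:semilocal-error-estimate-initial-condition} then follow from summing the contraction bounds along the iteration.
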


Now, we are ready to state and prove the main result of this paper.

\begin{theorem} \label{thm:semilocal-theorem-Ehrlich-second}
Let ${f \in \Kset[z]}$ be a polynomial of degree ${n \ge 2}$, ${N \ge 1}$, ${1 \le p \le \infty}$.
Suppose ${x^{(0)} \in \Kset^n}$ is an initial guess with distinct components such that
\begin{equation}\label{eq:semilocal-theorem-Ehrlich-initial-condition-second}
E_f(x^{(0)}) = \left \| \frac{W(x^{(0)})}{d(x^{(0)})} \right \|_p < \frac{8}{(3 + \sqrt{1 + 8 a})^2} \, ,
\end{equation}
where the function ${E_f}$ is defined by \eqref{eq:FIC-third} and ${a = (n - 1) ^ {1/q}}$.
Then $f$ has only simple zeros in $\Kset$ and
the Ehrlich-type iteration \eqref{eq:Ehrlich-high-order} is well-defined and converges to a root-vector $\xi$ of $f$ with order of convergence ${2N + 1}$ and with a posteriori error estimate
\begin{equation} \label{eq:semilocal-error-estimate-Ehrlich}
  \|x^{(k)} - \xi\| \preceq \alpha (E_f(x^{(k)})) \, \|W_f(x^{(k)})\|,
\end{equation}
for all ${k \ge 0}$ such that ${E_f(x^{(k)}) < 8 / (3 + \sqrt{1 + 8 a})^2}$,
where the function $\alpha$ is defined by \eqref{eq:function-alpha-definition}.
\end{theorem}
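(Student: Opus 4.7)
The strategy is to reduce the semilocal statement to the local convergence result Theorem~\ref{thm:second-local-Ehrlich} via the general semilocal-to-local principle of Proinov recorded as Theorem~\ref{prop:semilocal-theorem-second-kind}.

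The central algebraic observation is that with $R = 2/(3+\sqrt{1+8a})$, which is precisely the convergence radius appearing in Theorem~\ref{thm:second-local-Ehrlich}, one has the identity
\[
\frac{R(1-R)}{1+(a-1)R} = \frac{8}{(3+\sqrt{1+8a})^2}.
\]
A direct computation using $(\sqrt{1+8a})^2 = 1+8a$ confirms this, and the auxiliary requirement $R \le 1/(1+\sqrt{a})$ of Theorem~\ref{prop:semilocal-theorem-second-kind} reduces to $\sqrt{1+8a} \ge 2\sqrt{a}-1$, which is trivial (immediate when $2\sqrt{a}\le 1$, and otherwise follows by squaring). Hence the strict initial condition \eqref{eq:semilocal-theorem-Ehrlich-initial-condition-second} matches the hypothesis of Theorem~\ref{prop:semilocal-theorem-second-kind} with strict inequality for this choice of $R$.

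Applying Theorem~\ref{prop:semilocal-theorem-second-kind} at $x^{(0)}$ then delivers that $f$ has only simple zeros in $\Kset$ together with a root-vector $\xi \in \Kset^n$ satisfying $\|(x^{(0)}-\xi)/d(x^{(0)})\|_p < R$. This is exactly condition \eqref{eq:second-local-Ehrlich-initial-condition} with strict inequality, so Theorem~\ref{thm:second-local-Ehrlich} applied to $\xi$ gives that the Ehrlich-type iteration \eqref{eq:Ehrlich-high-order} is well-defined, converges to $\xi$, and does so with order $2N+1$.

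For the a posteriori estimate, fix any $k \ge 0$ with $E_f(x^{(k)}) < 8/(3+\sqrt{1+8a})^2$. Applying Theorem~\ref{prop:semilocal-theorem-second-kind} at $x^{(k)}$ produces a root-vector $\xi^{*} \in \Kset^n$ of $f$ with $\|x^{(k)}-\xi^{*}\| \preceq \alpha(E_f(x^{(k)}))\,\|W_f(x^{(k)})\|$ and $\|(x^{(k)}-\xi^{*})/d(x^{(k)})\|_p < R$. Then Theorem~\ref{thm:second-local-Ehrlich} applied with $\xi^{*}$ as the root-vector shows that the Picard sequence starting from $x^{(k)}$---which is exactly $(x^{(k+j)})_{j\ge 0}$---converges to $\xi^{*}$. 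Since the previous paragraph identifies its limit as $\xi$, uniqueness of limits forces $\xi^{*}=\xi$, and the first estimate becomes \eqref{eq:semilocal-error-estimate-Ehrlich}. The main obstacle is establishing the algebraic identity that links the semilocal constant with the local convergence radius; past that, the argument is essentially mechanical, with the root-vector identification handled entirely by uniqueness of the limit.
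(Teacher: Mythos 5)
Your proposal is correct and follows essentially the same route as the paper: rewrite the threshold $8/(3+\sqrt{1+8a})^2$ as $R(1-R)/(1+(a-1)R)$ with $R=2/(3+\sqrt{1+8a})$, invoke Theorem~\ref{prop:semilocal-theorem-second-kind} to produce a root-vector within the radius of Theorem~\ref{thm:second-local-Ehrlich}, and then identify the root-vector obtained at step $k$ with the limit $\xi$ by uniqueness of limits to get the a posteriori estimate. No substantive differences from the paper's argument.
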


\begin{proof}
Let us define $R$ by \eqref{eq:R-second}.
It is easy to calculate that ${R <  1 / (1 + \sqrt{a})}$ and
\[
 \frac{R (1 - R)}{1 + (a - 1)R}  =
\frac{2 (1 + \sqrt{1 + 8 a})}{(3 + \sqrt{1 + 8 a}) (1 + 2a + \sqrt{1 + 8 a})} = \frac{8}{(3 + \sqrt{1 + 8 a})^2} \, .
\]
Therefore, \eqref{eq:semilocal-theorem-Ehrlich-initial-condition-second} can be written in the form
\[
 \left \| \frac{W(x^{(0)})}{d(x^{(0)})} \right \|_p < \frac{R (1 - R)}{1 + (a - 1)R} \, .
\]
Then it follows from Theorem~\ref{prop:semilocal-theorem-second-kind} that $f$ has only simple zeros in $\Kset$ and
there exists a root-vector ${\xi \in \Kset^n}$ of $f$ such that
\[
\left \| \frac{x^{(0)} - \xi}{d(x^{(0)})} \right \|_p < R.
\]
Now Theorem~\ref{thm:second-local-Ehrlich} implies that the Ehrlich-type iteration \eqref{eq:Ehrlich-high-order} converges
to $\xi$ with order of convergence ${2N + 1}$.
It remains to prove the error estimate \eqref{eq:semilocal-error-estimate-Ehrlich}.
Suppose that for some ${k \ge 0}$,
\begin{equation}\label{eq:semilocal-theorem-Weierstrass-initial-condition-k}
   \left \| \frac{W_f(x^{(k)})}{d(x^{(k)})} \right \|_p < \frac{R (1 - R)}{1 + (a - 1) R} \, .
\end{equation}
Then it follows from Theorem~\ref{prop:semilocal-theorem-second-kind} that there exists a root-vector
${\eta\in \Kset^n}$ of $f$ such that
\begin{equation} \label{eq:semilocal-error-estimate-k}
  \|x^{(k)} - \eta\| \preceq \alpha (E_f(x^{(k)})) \, \|W_f(x^{(k)})\| \quad\text{and}\quad
	\left \| \frac{x^{(k)} - \eta}{d(x^{(k)})} \right \|_p < R.
\end{equation}
From the second inequality in \eqref{eq:semilocal-error-estimate-k} and Theorem~\ref{thm:second-local-Ehrlich}, we conclude that
the Ehrlich-type iteration \eqref{eq:Ehrlich-high-order} converges to $\eta$. By the uniqueness of the limit, we get ${\eta = \xi}$.
Therefore, the error estimate \eqref{eq:semilocal-error-estimate-Ehrlich} follows from the first inequality in
\eqref{eq:semilocal-error-estimate-k}.
This completes the proof.
\end{proof}

Setting ${p = \infty}$ in Theorem~\ref{thm:semilocal-theorem-Ehrlich-second}, we obtain the following result.

\begin{corollary}\label{cor:semilocal-theorem-Ehrlich-infinity}
Let ${f \in \Kset[z]}$ be a polynomial of degree ${n \ge 2}$ and ${N \ge 1}$. 
Suppose ${x^{(0)} \in \Kset}$ is an initial guess with distinct components such that
\begin{equation}\label{eq:semilocal-theorem-Ehrlich-initial-condition-infinity}
    \left \| \frac{W(x^{(0)})}{d(x^{(0)})} \right \|_\infty < \frac{8}{(3 + \sqrt{8n - 7})^2} .
\end{equation}
Then $f$ has only simple zeros in $\Kset$ and
the Ehrlich-type iteration \eqref{eq:Ehrlich-high-order} is well-defined and converges to a root-vector $\xi$ of $f$ with order of convergence ${2N + 1}$ and with error estimate \eqref{eq:semilocal-error-estimate-Ehrlich} for ${p = \infty}$.
\end{corollary}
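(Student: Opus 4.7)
The plan is to obtain the corollary as an immediate specialization of Theorem~\ref{thm:semilocal-theorem-Ehrlich-second} by choosing $p = \infty$, so essentially all of the work is already done in the main theorem; only a small arithmetic verification is needed to match the stated threshold.

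First, I would recall the convention from the beginning of the paper that $q$ denotes the conjugate exponent of $p$, i.e.\ $1/p + 1/q = 1$. Taking $p = \infty$ gives $q = 1$, and therefore the constant appearing throughout the semilocal theory,
\[
a = (n - 1)^{1/q},
\]
reduces in this case to $a = n - 1$. This is the only parameter-level substitution required.

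Next I would plug this value of $a$ into the right-hand side of the initial condition \eqref{eq:semilocal-theorem-Ehrlich-initial-condition-second} of Theorem~\ref{thm:semilocal-theorem-Ehrlich-second}:
\[
\frac{8}{(3 + \sqrt{1 + 8a})^2} \;=\; \frac{8}{(3 + \sqrt{1 + 8(n-1)})^2} \;=\; \frac{8}{(3 + \sqrt{8n - 7})^2}.
\]
Thus the hypothesis \eqref{eq:semilocal-theorem-Ehrlich-initial-condition-infinity} of the corollary is exactly \eqref{eq:semilocal-theorem-Ehrlich-initial-condition-second} with $p = \infty$.

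Finally, I would invoke Theorem~\ref{thm:semilocal-theorem-Ehrlich-second} with this choice of $p$. Its conclusion states that $f$ has only simple zeros in $\Kset$, that the Ehrlich-type iteration \eqref{eq:Ehrlich-high-order} is well-defined and converges to a root-vector $\xi$ of $f$ with order of convergence $2N + 1$, and that the a posteriori error estimate \eqref{eq:semilocal-error-estimate-Ehrlich} holds at every iterate satisfying the threshold inequality. All of these statements transfer verbatim to the $p = \infty$ setting, yielding precisely the claim of the corollary. There is no genuine obstacle here; the only thing to be careful about is correctly identifying $q = 1$ and hence $a = n - 1$ when passing from the general $p$-norm formulation to the maximum-norm formulation.
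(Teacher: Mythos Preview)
Your proposal is correct and follows exactly the paper's approach: the paper simply states that the corollary is obtained by setting $p = \infty$ in Theorem~\ref{thm:semilocal-theorem-Ehrlich-second}, and you have spelled out the one-line arithmetic that $q = 1$, $a = n - 1$, and hence $1 + 8a = 8n - 7$.
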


Setting ${p = 1}$ in Theorem~\ref{thm:semilocal-theorem-Ehrlich-second} we obtain the following result.

\begin{corollary}\label{cor:semilocal-theorem-Ehrlich-1}
Let ${f \in \Kset[z]}$ be a polynomial of degree ${n \ge 2}$ and ${N \ge 1}$.
Suppose ${x^{(0)} \in \Kset^n}$ is an initial guess with distinct
components such that
\begin{equation}\label{eq:semilocal-theorem-Ehrlich-initial-condition-1}
    \left \| \frac{W(x^{(0)})}{d(x^{(0)})} \right \|_1 <  \frac{2}{9} \, .
\end{equation}
Then $f$ has only simple zeros in $\Kset$ and
the Ehrlich-type iteration \eqref{eq:Ehrlich-high-order} is well-defined and converges with order ${2N + 1}$ to a root-vector $\xi$ of $f$ with error estimate \eqref{eq:semilocal-error-estimate-Ehrlich} for ${p = 1}$.
\end{corollary}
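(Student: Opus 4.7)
The plan is to derive Corollary~\ref{cor:semilocal-theorem-Ehrlich-1} as a direct specialization of Theorem~\ref{thm:semilocal-theorem-Ehrlich-second} to the case $p = 1$. There is essentially no new argument to be made; the entire task reduces to verifying that the initial condition \eqref{eq:semilocal-theorem-Ehrlich-initial-condition-1} coincides with the hypothesis \eqref{eq:semilocal-theorem-Ehrlich-initial-condition-second} when $p$ is set to $1$.

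First I would recall the convention that $q$ denotes the conjugate exponent of $p$, so $p = 1$ forces $q = \infty$. Under this choice the constant $a$ appearing in Theorem~\ref{thm:semilocal-theorem-Ehrlich-second} becomes
\[
a = (n - 1)^{1/q} = (n - 1)^{0} = 1,
\]
where I use the convention $0^0 = 1$ established at the beginning of Section~\ref{sec:A-general-convergence-theorem} to cover the degenerate case $n = 2$. Then I would carry out the elementary arithmetic simplification
\[
\frac{8}{(3 + \sqrt{1 + 8a})^2} = \frac{8}{(3 + \sqrt{9})^2} = \frac{8}{36} = \frac{2}{9},
\]
so the right-hand side of \eqref{eq:semilocal-theorem-Ehrlich-initial-condition-second} collapses to the right-hand side of \eqref{eq:semilocal-theorem-Ehrlich-initial-condition-1}, while the left-hand side is simply $E_f(x^{(0)})$ computed with the $\|\cdot\|_1$-norm.

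Having matched the hypotheses, I would invoke Theorem~\ref{thm:semilocal-theorem-Ehrlich-second} with $p = 1$: it delivers at once that $f$ has only simple zeros in $\Kset$, that the Ehrlich-type iteration \eqref{eq:Ehrlich-high-order} is well-defined and converges to some root-vector $\xi$ of $f$ with order $2N+1$, and that the a posteriori error estimate \eqref{eq:semilocal-error-estimate-Ehrlich} holds (now with the $1$-norm used inside $E_f$). Since there is no delicate step—only the identification $a = 1$ and a numerical simplification—no step poses any real obstacle; the only thing to be mildly careful about is the convention $(n-1)^{1/\infty} = 1$ for all $n \ge 2$, including $n = 2$, which must be interpreted via $0^0 = 1$ as explicitly adopted in the paper.
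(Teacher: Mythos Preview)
Your proposal is correct and matches the paper's own justification, which simply specializes Theorem~\ref{thm:semilocal-theorem-Ehrlich-second} to $p=1$. One minor remark: for $n=2$ you have $(n-1)^{1/\infty}=1^{0}=1$, so the $0^0$ convention is not actually invoked here.
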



\section{Numerical examples}
\label{sec:numerical-examples}

In this section, we present several numerical examples to show some applications of
Theorem~\ref{thm:semilocal-theorem-Ehrlich-second}.
Let ${f \in \Cset[z]}$ be a polynomial of degree ${n \ge 2}$ and let $x^{(0)} \in \Cset^n$ be an initial guess.
We show that Theorem~\ref{thm:semilocal-theorem-Ehrlich-second} can be used:
\begin{itemize}
	\item to prove numerically that $f$ has only simple zeros;
\item to prove numerically that  $N$th Ehrlich-type iteration \eqref{eq:Ehrlich-high-order}  starting from ${x^{(0)}}$  is well-defined and converges with order ${2N + 1}$ to a root-vector of $f$;
\item to guarantee the desired accuracy when calculating the roots of $f$ via $N$th Ehrlich-type method.
\end{itemize}

In the examples below, we use the function of initial conditions ${E_f \colon \mathcal{D} \to \Rset_+}$ defined by
\begin{equation} \label{eq:FIC3-SM-infty}
E_f(x) =  \left \| \frac{W_f(x)}{d(x)} \right \|_\infty \, ,
\end{equation}
where $W_f$ is the Weierstrass correction defined by \eqref{eq:Weierstrass-correction}.
We consider only the case ${p = \infty}$ since the other cases are similar.

Also, we use the real function $\alpha$ defined by
\begin{equation} \label{eq:alpha-infty}
 \alpha(t) = \frac{2}{1 - (n - 2) t + \sqrt{(1 - (n - 2) t)^2 - 4 t}} \, .
\end{equation}

It follows from Theorem~\ref{thm:semilocal-theorem-Ehrlich-second} that if there exists an integer ${m \ge 0}$ such that
\begin{equation}  \label{eq:example-initial-conditions-Ehrlich}
 E_f(x^{(m)}) \le \mathscr{R} = \frac{8}{(3 + \sqrt{8 n - 7})^2} \, ,
\end{equation}
then $f$ has only simple zeros and the Ehrlich-type iteration \eqref{eq:Ehrlich-high-order} is well-defined and converges to a root-vector
$\xi$ of $f$ with order of convergence ${2N + 1}$.
Besides, for all ${k \ge m}$ such that
\begin{equation} \label{eq:mu-alpha-infinity}
 E_f(x^{(k)}) < \mathscr{R} = \frac{8}{(3 + \sqrt{8 n - 7})^2}
\end{equation}
the following a posteriori error estimate holds:
\begin{equation}  \label{eq:example-posteriori-estimate}
\|x^{(k)} - \xi\|_\infty <\varepsilon_k,  \quad\text{where}\quad
\varepsilon_k = \alpha(E_f(x^{(k)})) \, \|W_f(x^{(k)})\|_\infty \, .
\end{equation}

In the examples, we apply the Ehrlich-type methods \eqref{eq:Ehrlich-high-order} for some ${N \ge 1}$ using the following stopping criterion:
\begin{equation}  \label{eq:stop-criterion}
\varepsilon_k < 10^{-15} \quad\text{and}\quad E_f(x^{(k)}) < \mathscr{R} \quad  (k \ge m).
\end{equation}

For given $N$ we calculate the smallest $m \ge 0$ which satisfies the convergence condition \eqref{eq:example-initial-conditions-Ehrlich},
the smallest $k \ge m$ for which the stopping criterion \eqref{eq:stop-criterion} is satisfied,
as well as the value of $\varepsilon_k$ for the last $k$.

In Table~2 the values of iterations are given to 15 decimal places. The values of other quantities
($\mathscr{R}$, $E_f(x^{(m)})$, etc.) are given to 6 decimal places.


\begin{example} \label{exmp:ZhPH-06}
We consider the polynomial
\[
{f(z) = z^{4} - 1}
\]
and the initial guess
\[
{x^{(0)} = (0.5 + 0.5i, - 1.36 + 0.42i, -0.25 + 1.28i, 0.46 - 1.37i)}
\]
which are taken from Zhang et al. \cite{ZPH06}.
We have ${\mathscr{R} = 0.125}$ and ${E(x^{(0)}) = 0.506619}$.
The results for this example are presented in Table~\ref{tab:ZhPH-example}.
For example, we can see that for ${N = 10}$ at the first iteration we have proved that
the Ehrlich-type method converges with order of convergence $21$ and that at the second iteration we have calculated the zeros $f$ with accuracy less than ${10^{-127}}$. Moreover, at the next iteration we obtain the zeros of $f$ with accuracy less than ${10^{-2682}}$.
Also, we can see that for ${N = 100}$ at the second iteration we have obtained the zeros of $f$
with accuracy less than ${10^{-11450}}$.

\begin{table}[!ht]
  \centering
  \caption{Values of $m$, $k$ and $\varepsilon_k$ for Example~\ref{exmp:ZhPH-06} (${\mathscr{R} = 0.125}$)}
  \label{tab:ZhPH-example}
  \begin{tabular}  {c c c c|c l l}
\hline
$N$ & $m$ & ${E_f(x^{(m)})}$ & $\varepsilon_m$ & $k$ & \multicolumn{1}{c}{$\varepsilon_k$} & \multicolumn{1}{c}{$\varepsilon_{k + 1}$}\\
\hline \\ [-2.0ex]
1 & $2$   & $0.010032$ & $1.457548 \times 10^{-2}$ & $4$ & $4.385760 \times 10^{-21}$  & $ 8.919073 \times 10^{-63}$\\
2 & $1$   & $0.067725$ & $1.242914 \times 10^{-1}$ & $3$ & $1.347060 \times 10^{-38}$  & $7.284576 \times 10^{-193}$\\
3 & $1$   & $0.015716$ & $2.300541 \times 10^{-2}$ & $3$ & $1.825502 \times 10^{-106}$ & $5.054741 \times 10^{-744}$\\
4 & $1$   & $0.002730$ & $3.887455 \times 10^{-3}$ & $2$ & $1.330837 \times 10^{-25}$  & $3.543773 \times 10^{-230}$\\
5 & $1$   & $0.001215$ & $1.722883 \times 10^{-3}$ & $2$ & $4.720064 \times 10^{-37}$  & $2.999643 \times 10^{-407}$\\
6 & $1$   & $0.000206$ & $2.927439 \times 10^{-4}$ & $2$ & $1.060096 \times 10^{-50}$  & $5.523501 \times 10^{-657}$\\
7 & $1$   & $0.000081$ & $1.155284 \times 10^{-4}$ & $2$ & $6.261239 \times 10^{-67}$  & $3.252761 \times 10^{-1002}$\\
8 & $1$   & $0.000014$ & $1.986052 \times 10^{-5}$ & $2$ & $6.080606 \times 10^{-85}$  & $3.570038 \times 10^{-1439}$\\
9 & $1$   & $0.000005$ & $7.910775 \times 10^{-6}$ & $2$ & $1.309022 \times 10^{-105}$ & $1.170454 \times 10^{-2002}$\\
10 & $1$  & $0.000000$ & $1.366899 \times 10^{-6}$ & $2$ & $4.301615 \times 10^{-128}$ & $8.477451 \times 10^{-2683}$\\
100 & $1$ & $0.000000$ & $\,\,1.820743 \times 10^{-57}$& $1$ & $1.820743 \times 10^{-57}$  & $3.460397 \times 10^{-11451}$\\
\hline
\end{tabular}
\end{table}
\FloatBarrier

In Table~\ref{tab:ZhPH-example-Weierstrass-numerical-results-N=10}, we present numerical results for Example~\ref{exmp:ZhPH-06}
in the case ${N = 10}$.

\begin{table}[!ht]
  \centering
  \caption{Numerical results for Example~\ref{exmp:ZhPH-06} in the case $N = 10$}
  \label{tab:ZhPH-example-Weierstrass-numerical-results-N=10}
  \begin{tabular} {l l l}
\hline
$k$ & \multicolumn{1}{c}{$x_1^{(k)}$} & \multicolumn{1}{c}{$x_2^{(k)}$} \\
\hline
$0$ & $0.5 \quad \qquad\qquad\qquad + 0.5i$    & $\text{--} 1.36\;\, \qquad\qquad\qquad + 0.42i$ \\
$1$ & $1.000000380419496 + 0.000000816235730i$ & $\text{--} 1.000000220051461 - 0.000000495915480i$\\
$2$ & $1.000000000000000 + 0.000000000000000i$ & $\text{--} 1.000000000000000 + 0.000000000000000i$\\
\hline
$k$ &\multicolumn{1}{c}{$x_3^{(k)}$}  & \multicolumn{1}{c}{$x_4^{(k)}$} \\
\hline
$0$ & $\text{--} 0.25\;\;\qquad\qquad\qquad + 1.28i$         & $\;\; 0.46 \;\, \qquad\qquad\qquad - 1.37i$\\
$1$ & $\;\; 0.000000277962637 + 0.999999578393062i$ & $     \text{--} 0.000000314533436 - 0.999998669784542i$\\
$2$ & $\;\; 0.000000000000000 + 1.000000000000000i$ & $\;\; 0.000000000000000 - 1.000000000000000i$\\
\hline
\end{tabular}
\end{table}
\FloatBarrier
\end{example}


\begin{example} \label{exmp:PetIlPet}
We consider the polynomial
\[
{f(z) = z^{15} + z^{14} + 1}
\]
and Aberth's initial approximation ${x^{(0)} \in \Cset^n}$ given by
(see Aberth \cite{Abe73} and Petkovi\'c et al. \cite{PIP07}):
\begin{equation} \label{eq:Abert-initial-guess}
x_\nu^{(0)} = - \frac{a_1}{n} + r_0 \exp{(i \theta_\nu)}, \quad \theta_\nu = \frac{\pi}{n} \left(2\nu - \frac{3}{2}\right),
\quad \nu = 1,\ldots,n,
\end{equation}
where ${a_1 = 1}$, ${r_0 = 2}$ and ${n = 15}$.
We have ${\mathscr{R} = 0.043061}$  and ${E(x^{(0)}) = 0.179999}$.
The results for this example are presented in Table~\ref{tab:PetIlPet-example}.
For example, we can see that for ${N = 30}$ at the third iteration we have obtained the zeros of $f$
with accuracy less than ${10^{-248}}$.
Moreover, at the next iteration we get the zeros of $f$ with accuracy less than ${10^{-15105}}$.

\begin{table}[!ht]
  \centering
  \caption{Values of $m$, $k$ and $\varepsilon_k$ for Example~\ref{exmp:PetIlPet} (${\mathscr{R} = 0.043061}$)}
  \label{tab:PetIlPet-example}
  \begin{tabular} {c c c c|c l l}
\hline
$N$ & $m$ & ${E_f(x^{(m)})}$ & $\varepsilon_m$ & $k$ & \multicolumn{1}{c}{$\varepsilon_k$} & \multicolumn{1}{c}{$\varepsilon_{k + 1}$}\\
\hline \\ [-2.0ex]
1 & $6$ & $0.036897$ & $3.187918 \times 10^{-2} $      & $9$ & $3.967908 \times 10^{-36}$ & $5.304009 \times 10^{-106}$\\
2 & $5$ & $0.000003$ & $1.182714 \times 10^{-6} $      & $6$ & $6.112531 \times 10^{-28}$ & $2.230412 \times 10^{-134}$\\
3 & $4$ & $0.000064$ & $2.475020 \times 10^{-5} $      & $5$ & $2.446120 \times 10^{-29}$ & $2.722168 \times 10^{-197}$\\
4 & $4$ & $0.000000$ & $\,\,1.550670 \times 10^{-11} $ & $5$ & $3.838741 \times 10^{-93}$ & $1.589981 \times 10^{-827}$\\
5 & $3$ & $0.005793$ & $2.415745 \times 10^{-3} $      & $4$ & $9.532339 \times 10^{-24}$ & $8.487351 \times 10^{-248}$\\
6 & $3$ & $0.000293$ & $1.127450 \times 10^{-4} $      & $4$ & $9.565008 \times 10^{-45}$ & $1.725858 \times 10^{-565}$\\
7 & $3$ & $0.000005$ & $2.173198 \times 10^{-6} $      & $4$ & $4.018844 \times 10^{-77}$ & $6.737932 \times 10^{-1138}$\\
8 & $3$ & $0.000000$ & $1.562375 \times 10^{-8} $      & $4$ & $1.162424 \times 10^{-123}$& $1.291370 \times 10^{-2080}$\\
9 & $3$ & $0.000000$ & $\,\,4.092421 \times 10^{-11} $ & $4$ & $4.245137 \times 10^{-187}$& $1.373908 \times 10^{-3530}$\\
10 & $3$ & $0.000000$ &$\,\,3.904607 \times 10^{-14} $ & $4$ & $4.643262 \times 10^{-270}$& $2.543247 \times 10^{-5644}$\\
30 & $2$ & $0.000055$ & $2.129417 \times 10^{-5} $     & $3$ & $5.721566 \times 10^{-249}$& $2.377023 \times 10^{-15106}$\\
\hline
\end{tabular}
\end{table}
\FloatBarrier

\end{example}


\begin{example} \label{exmp:Wilkinson-polynomial}
We consider the Wilkinson polynomial (\cite{Wil63})
\[
f(z) = \prod_{j = 1}^{20} (z - j) = z^{20} - 120 z^{14} + \ldots + 2 \; 432 \; 902 \; 008 \; 176 \; 640 \; 000 .
\]
and Abert's initial approximation \eqref{eq:Abert-initial-guess} with ${a_1 = - 120}$, ${r_0 = 20}$ and ${n = 20}$.
We have ${\mathscr{R} = 0.033867}$ and ${E(x^{(0)}) = 0.344409}$.
The results foe Example~\ref{exmp:Wilkinson-polynomial} are shown in Table~\ref{tab:Wilkinson-Aberth-example}.
For example, we for ${N = 100}$ at the seventh iteration we get the zeros of $f$
with accuracy less than ${10^{-13776}}$.

\begin{table}[!ht]
  \centering
  \caption{Values of $m$, $k$ and $\varepsilon_k$ for Example~\ref{exmp:Wilkinson-polynomial} (${\mathscr{R} = 0.033867}$ )}
  \label{tab:Wilkinson-Aberth-example}
  \begin{tabular} {c c c c|c l l}
\hline
$N$ & $m$ & ${E_f(x^{(m)})}$ & $\varepsilon_m$ & $k$ & \multicolumn{1}{c}{$\varepsilon_k$} & \multicolumn{1}{c}{$\varepsilon_{k + 1}$}\\
\hline \\ [-2.0ex]
1 & $18$ & $0.000060$ & $6.095859 \times 10^{-5} $     & $20$ & $1.620028 \times 10^{-38}$ & $4.276235 \times 10^{-114}$ \\
2 & $12$ & $0.015335$ & $2.153155 \times 10^{-2} $     & $14$ & $1.095084 \times 10^{-46}$ & $1.779476 \times 10^{-230}$ \\
3 & $10$ & $0.018005$ & $2.769333 \times 10^{-2} $     & $12$ & $8.917532 \times 10^{-86}$ & $4.482714 \times 10^{-596}$ \\
4 & $9$  & $0.005514$ & $6.130790 \times 10^{-3} $     & $10$ & $4.221856 \times 10^{-21}$ & $7.250879 \times 10^{-184}$ \\
5 & $9$  & $0.000000$ & $\,\,1.159694 \times 10^{-15}$ & $10$ & $5.021359 \times 10^{-165}$& $5.118016 \times 10^{-1808}$\\
6 & $8$  & $0.000237$ & $2.386016 \times 10^{-4} $     & $9$  &  $8.455240 \times 10^{-48}$ & $1.280870 \times 10^{-612}$ \\
7 & $8$  & $0.000000$ & $\,\,2.723047 \times 10^{-17}$ & $8$  &  $2.723047 \times 10^{-17}$ & $8.926059 \times 10^{-249}$ \\
8 & $7$  & $0.018995$ & $2.934241 \times 10^{-2} $     & $8$  &  $2.885374 \times 10^{-30}$ & $4.152134 \times 10^{-503}$ \\
9 & $7$  & $0.002180$ & $2.274734 \times 10^{-3} $     & $8$  &  $3.792876 \times 10^{-51}$ & $1.140751 \times 10^{-958}$ \\
10 & $7$ & $0.000000$ & $5.185525 \times 10^{-7} $     & $8$  &  $1.620086 \times 10^{-132}$ &$2.936276 \times 10^{-2768}$ \\
30 & $5$ & $0.000181$ & $1.821419 \times 10^{-4} $     & $6$  &  $1.395923 \times 10^{-226}$ &$1.902920 \times 10^{-13777}$ \\
\hline
\end{tabular}
\end{table}
\FloatBarrier

In the Figure~\ref{fig:Wilkinson-example-Ehrlich}, we present the trajectories of approximations generated
by the method \eqref{eq:Ehrlich-high-order} for ${N = 30}$ after $6$ iterations.


\begin{figure}[!ht]
\centering
\includegraphics[scale = 1.1]{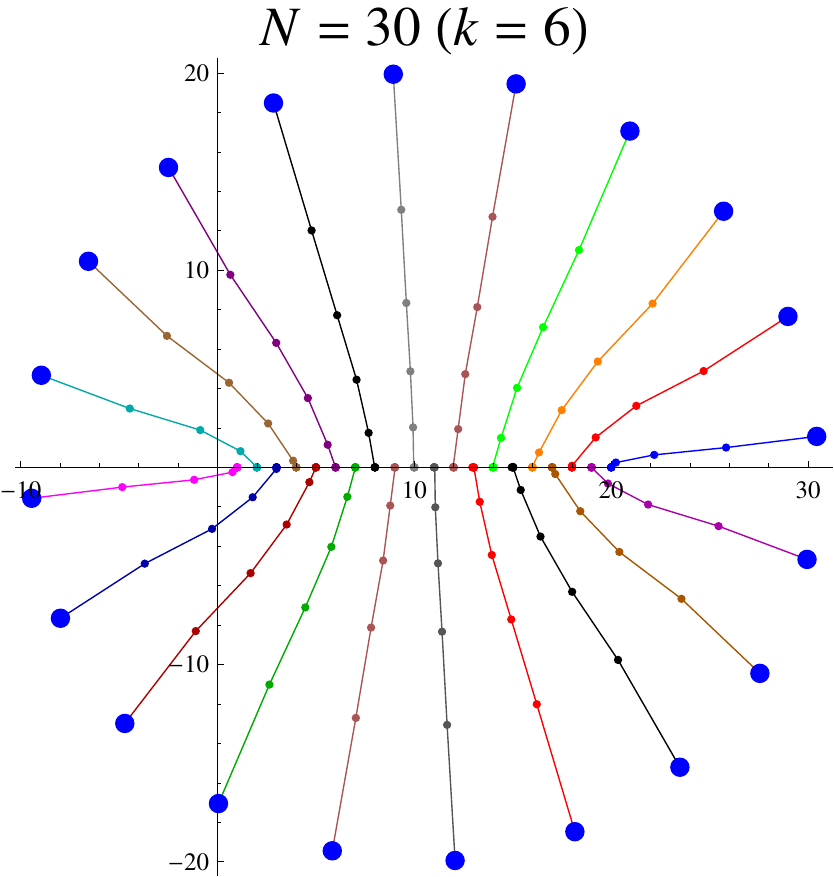}
\caption{Trajectories of approximations for the Wilkinson polynomial $f(z) = \prod_{j = 1}^{20} (z - j)$}
\label{fig:Wilkinson-example-Ehrlich}
\end{figure}
\FloatBarrier

\end{example}


\begin{example} \label{exmp:Wang-Zhao-40}
We consider the polynomial
\[
{f(z) = z^{40} - 1}.
\]
In this example we use Abert's initial approximation \eqref{eq:Abert-initial-guess} with ${a_1 = 0}$, ${r_0 = 2}$ and ${n = 40}$.
We have ${\mathscr{R} = 0.018685}$, ${E(x^{(0)}) = 0.159318}$.
The results for Example~\ref{exmp:Wang-Zhao-40} can be seen in Table~\ref{tab:Wang-Zhao-example-Abert-power-40}.

\begin{table}[!ht]
  \centering
  \caption{Values of $m$, $k$ and $\varepsilon_k$ for Example~\ref{exmp:Wang-Zhao-40} (${\mathscr{R} = 0.018685}$)}
  \label{tab:Wang-Zhao-example-Abert-power-40}
   \begin{tabular} {c c c c|c l l}
\hline
$N$ & $m$ & ${E_f(x^{(m)})}$ & $\varepsilon_m$ & $k$ & \multicolumn{1}{c}{$\varepsilon_k$} & \multicolumn{1}{c}{$\varepsilon_{k + 1}$}\\
\hline \\ [-2.0ex]
1 & $15$  & $0.007235$ & $1.588799 \times 10^{-3} $      & $17$ & $1.057241 \times 10^{-18}$  & $1.574672 \times 10^{-52}$\\
2 & $11$  & $0.000001$ & $1.731641 \times 10^{-7} $      & $12$ & $2.763909 \times 10^{-30}$  & $2.863869 \times 10^{-144}$\\
3 & $9$   & $0.000026$ & $4.171842 \times 10^{-6} $      & $10$ & $5.167701 \times 10^{-32}$  & $2.328540 \times 10^{-213}$\\
4 & $8$   & $0.000032$ & $5.141616 \times 10^{-6} $      & $9$  & $7.830010 \times 10^{-40}$  & $3.487627 \times 10^{-344}$\\
5 & $7$   & $0.010766$ & $2.954474 \times 10^{-3} $      & $8$  & $1.468181 \times 10^{-20}$  & $2.870206 \times 10^{-208}$\\
6 & $7$   & $0.000002$ & $4.201055 \times 10^{-7} $      & $8$  & $7.096655 \times 10^{-71}$  & $6.481892 \times 10^{-900}$\\
7 & $7$   & $0.000000$ & $\,\,9.445503 \times 10^{-15}$  & $8$  & $3.169914 \times 10^{-196}$ & $2.445585 \times 10^{-2918}$\\
8 & $6$   & $0.010675$ & $2.911647 \times 10^{-3} $      & $7$  & $8.218559 \times 10^{-31}$ & $3.538870 \times 10^{-495}$\\
9 & $6$   & $0.000281$ & $4.462548 \times 10^{-5} $      & $7$  & $2.324176 \times 10^{-64}$  & $1.205364 \times 10^{-1190}$\\
10 & $6$  & $0.000000$ & $1.231259 \times 10^{-7} $      & $7$  & $1.392265 \times 10^{-124}$ & $1.840079 \times 10^{-2580}$\\
30 & $5$  & $0.000000$ & $\,\,2.416285 \times 10^{-34}$  & $5$  & $2.416285 \times 10^{-34}$  & $1.294365 \times 10^{-1987}$\\
\hline
\end{tabular}
\end{table}
\FloatBarrier

In the Figure~\ref{fig:Wang-Zhao-example-Ehrlich}, we present the trajectories of approximations generated by the method
\eqref{eq:Ehrlich-high-order} for ${N = 30}$ after $5$ iterations.


\begin{figure}[!ht]
\centering
\includegraphics[scale = 1]{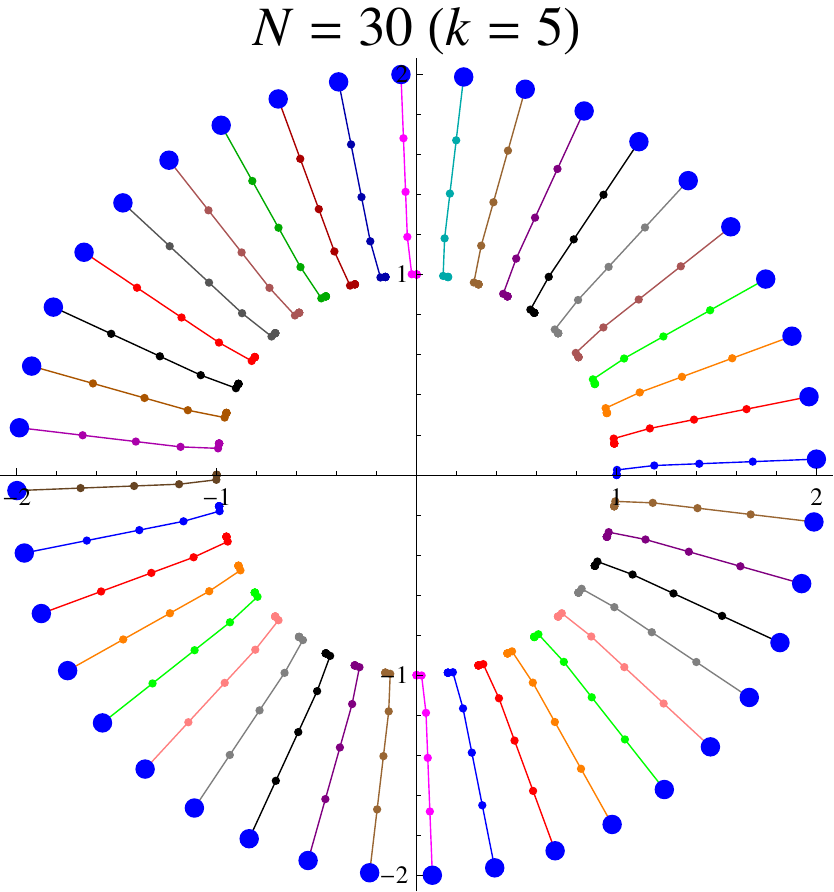}
\caption{Trajectories of approximations for the polynomial ${f(z) = z^{40} - 1}$}
\label{fig:Wang-Zhao-example-Ehrlich}
\end{figure}
\FloatBarrier

\end{example}


\begin{backmatter}

\section*{Competing interests}
 The authors declare that they have no competing interests.

\section*{Author's contributions}
    Both authors contributed equally and significantly in writing this paper. Both authors read and approved the final manuscript.

\section*{Acknowledgements}
 This research is supported by the project NI15-FMI-004 of Plovdiv University.





\newcommand{\BMCxmlcomment}[1]{}

\BMCxmlcomment{

<refgrp>

<bibl id="B1">
  <title><p>A unified theory of cone metric spaces and its applications to the
  fixed point theory</p></title>
  <aug>
    <au><snm>Proinov</snm><fnm>PD</fnm></au>
  </aug>
  <source>Fixed Point Theory Appl.</source>
  <pubdate>2013</pubdate>
  <volume>2013</volume>
  <fpage>ArticleID103</fpage>
</bibl>

<bibl id="B2">
  <title><p>Numerical {S}olution of {P}olynomial {E}quations</p></title>
  <aug>
    <au><snm>Sendov</snm><fnm>B</fnm></au>
    <au><snm>Andreev</snm><fnm>A.</fnm></au>
    <au><snm>Kjurkchiev</snm><fnm>N.</fnm></au>
  </aug>
  <source>Handbook of {N}umerical {A}nalysis</source>
  <publisher>Amsterdam: Elsevier</publisher>
  <pubdate>1994</pubdate>
  <volume>III</volume>
  <fpage>625</fpage>
  <lpage>-778</lpage>
</bibl>

<bibl id="B3">
  <title><p>Initial {A}pproximations and {R}oot {F}inding {M}ethods</p></title>
  <aug>
    <au><snm>Kyurkchiev</snm><fnm>N.V.</fnm></au>
  </aug>
  <publisher>Berlin: Wiley</publisher>
  <series><title><p>Mathematical Research</p></title></series>
  <pubdate>1998</pubdate>
  <volume>104</volume>
</bibl>

<bibl id="B4">
  <title><p>Numerical {M}ethods for {R}oots of {P}olynomials {P}art
  I</p></title>
  <aug>
    <au><snm>McNamee</snm><fnm>J.M.</fnm></au>
  </aug>
  <publisher>Amsterdam: Elsevier</publisher>
  <series><title><p>Studies in Computational Mathematics</p></title></series>
  <pubdate>2007</pubdate>
  <volume>14</volume>
</bibl>

<bibl id="B5">
  <title><p>Point {E}stimation of {R}oot {F}inding Methods</p></title>
  <aug>
    <au><snm>Petkovi\'c</snm><fnm>M</fnm></au>
  </aug>
  <publisher>Berlin: Springer</publisher>
  <series><title><p>Lecture Notes in Mathematics</p></title></series>
  <pubdate>2008</pubdate>
  <volume>1933</volume>
</bibl>

<bibl id="B6">
  <title><p>Neuer {B}eweis des {S}atzes, dass jede ganze rationale {F}unction
  einer {V}er\"anderlichen dargestellt werden kann als ein {P}roduct aus
  linearen {F}unctionen derselben {V}er\"anderlichen</p></title>
  <aug>
    <au><snm>Weierstrass</snm><fnm>K.</fnm></au>
  </aug>
  <source>Sitzungsber. K\"onigl. Akad. Wiss. Berlin</source>
  <pubdate>1891</pubdate>
  <fpage>1085</fpage>
  <lpage>-1101</lpage>
</bibl>

<bibl id="B7">
  <title><p>A modified {N}ewton method for polynomials</p></title>
  <aug>
    <au><snm>Ehrlich</snm><fnm>L.W.</fnm></au>
  </aug>
  <source>Comm. AMC</source>
  <pubdate>967</pubdate>
  <volume>10</volume>
  <issue>2</issue>
  <fpage>107</fpage>
  <lpage>-108</lpage>
</bibl>

<bibl id="B8">
  <title><p>Iteration methods for finding all zeros of a polynomial
  simultaneously</p></title>
  <aug>
    <au><snm>Abert</snm><fnm>O</fnm></au>
  </aug>
  <source>Math. Comput.</source>
  <pubdate>1973</pubdate>
  <volume>27</volume>
  <fpage>339</fpage>
  <lpage>-344</lpage>
</bibl>

<bibl id="B9">
  <title><p>Residuenabschatzung fur {P}olynom-{N}ullstellen mittels
  {L}agrange-{I}nterpolation</p></title>
  <aug>
    <au><snm>B\"orsch Supan</snm><fnm>W.</fnm></au>
  </aug>
  <source>Numer. Math</source>
  <pubdate>1970</pubdate>
  <volume>14</volume>
  <fpage>287</fpage>
  <lpage>-296</lpage>
</bibl>

<bibl id="B10">
  <title><p>On the simultaneous determination of polynomial roots</p></title>
  <aug>
    <au><snm>Werner</snm><fnm>W.</fnm></au>
  </aug>
  <source>Lecture Notes Math.</source>
  <pubdate>1982</pubdate>
  <volume>953</volume>
  <fpage>188</fpage>
  <lpage>-202</lpage>
</bibl>

<bibl id="B11">
  <title><p>On the local convergence of the {E}hrlich method for numerical
  computation of polynomial zeros</p></title>
  <aug>
    <au><snm>Proinov</snm><fnm>PD</fnm></au>
  </aug>
  <note>submitted</note>
</bibl>

<bibl id="B12">
  <title><p>A method for simultaneous determination of all roots of algebraic
  polynomials</p></title>
  <aug>
    <au><snm>Kyurkchiev</snm><fnm>N.V.</fnm></au>
    <au><snm>Taschev</snm><fnm>S.</fnm></au>
  </aug>
  <source>C.~R. Acad. Bulg. Sci.</source>
  <pubdate>1981</pubdate>
  <volume>34</volume>
  <fpage>1053</fpage>
  <lpage>-1055</lpage>
  <note>in Russian</note>
</bibl>

<bibl id="B13">
  <title><p>Certain modifications of {N}ewton's method for the approximate
  solution of algebraic equations</p></title>
  <aug>
    <au><snm>Tashev</snm><fnm>S.</fnm></au>
    <au><snm>Kyurkchiev</snm><fnm>N.</fnm></au>
  </aug>
  <source>Serdica Math. J.</source>
  <pubdate>1983</pubdate>
  <volume>9</volume>
  <fpage>67</fpage>
  <lpage>-73</lpage>
  <note>in Russian</note>
</bibl>

<bibl id="B14">
  <title><p>Complexity analysis of a process for simultaneously obtaining all
  zeros of polynomials</p></title>
  <aug>
    <au><snm>Wang</snm><fnm>D.R.</fnm></au>
    <au><snm>Zhao</snm><fnm>F.G.</fnm></au>
  </aug>
  <source>Computing</source>
  <pubdate>1989</pubdate>
  <volume>43</volume>
  <fpage>187</fpage>
  <lpage>-197</lpage>
</bibl>

<bibl id="B15">
  <title><p>Convergence conditions of some methods for the simultaneous
  computation of polynomial zeros</p></title>
  <aug>
    <au><snm>Tilli</snm><fnm>P.</fnm></au>
  </aug>
  <source>Calcolo</source>
  <pubdate>1998</pubdate>
  <volume>35</volume>
  <fpage>3</fpage>
  <lpage>-15</lpage>
</bibl>

<bibl id="B16">
  <title><p>Ehrlich's methods with a raised speed of convergence</p></title>
  <aug>
    <au><snm>Kjurkchiev</snm><fnm>N.V.</fnm></au>
    <au><snm>Andreev</snm><fnm>A.</fnm></au>
  </aug>
  <source>Serdica Math. J.</source>
  <pubdate>1987</pubdate>
  <volume>13</volume>
  <fpage>52</fpage>
  <lpage>-57</lpage>
</bibl>

<bibl id="B17">
  <title><p>General local convergence theory for a class of iterative processes
  and its applications to {N}ewton's method</p></title>
  <aug>
    <au><snm>Proinov</snm><fnm>PD</fnm></au>
  </aug>
  <source>J. Complexity</source>
  <pubdate>2009</pubdate>
  <volume>25</volume>
  <fpage>38</fpage>
  <lpage>-62</lpage>
</bibl>

<bibl id="B18">
  <title><p>New general convergence theory for iterative processes and its
  applications to {N}ewton-{K}antorovich type theorems</p></title>
  <aug>
    <au><snm>Proinov</snm><fnm>PD</fnm></au>
  </aug>
  <source>J. Complexity</source>
  <pubdate>2010</pubdate>
  <volume>26</volume>
  <fpage>3</fpage>
  <lpage>-42</lpage>
</bibl>

<bibl id="B19">
  <title><p>General convergence theorems for iterative processes and
  applications to the {W}eierstrass root-finding method</p></title>
  <aug>
    <au><snm>Proinov</snm><fnm>PD</fnm></au>
  </aug>
  <source>arXiv:1503.05243</source>
  <pubdate>2015</pubdate>
</bibl>

<bibl id="B20">
  <title><p>Semilocal convergence of {C}hebyshev-like root-finding method for
  simultaneous approximation of polynomial zeros</p></title>
  <aug>
    <au><snm>Proinov</snm><fnm>P.D.</fnm></au>
    <au><snm>Cholakov</snm><fnm>S.I.</fnm></au>
  </aug>
  <source>Appl. Math. Comput.</source>
  <pubdate>2014</pubdate>
  <volume>236</volume>
  <fpage>669</fpage>
  <lpage>-682</lpage>
</bibl>

<bibl id="B21">
  <title><p>On the convergence of a family of {W}eierstrass-type root-finding
  methods</p></title>
  <aug>
    <au><snm>Proinov</snm><fnm>P.D.</fnm></au>
    <au><snm>Vasileva</snm><fnm>M.T.</fnm></au>
  </aug>
  <source>C.~R. Acad. Bulg. Sci.</source>
  <pubdate>2015</pubdate>
  <volume>68</volume>
  <fpage>697</fpage>
  <lpage>-704</lpage>
</bibl>

<bibl id="B22">
  <title><p>Relationships between different types of initial conditions for
  simultaneous root finding methods</p></title>
  <aug>
    <au><snm>Proinov</snm><fnm>PD</fnm></au>
  </aug>
  <source>arXiv:1506.01043</source>
  <pubdate>2015</pubdate>
</bibl>

<bibl id="B23">
  <title><p>A high order iteration formula for the simultaneous inclusion of
  polynomial zeros</p></title>
  <aug>
    <au><snm>Zhang</snm><fnm>X</fnm></au>
    <au><snm>Peng</snm><fnm>H</fnm></au>
    <au><snm>Hu</snm><fnm>G</fnm></au>
  </aug>
  <source>Appl. Math. Comput.</source>
  <pubdate>2006</pubdate>
  <volume>179</volume>
  <fpage>545</fpage>
  <lpage>–552</lpage>
</bibl>

<bibl id="B24">
  <title><p>A posteriori error bound methods for the inclusion of polynomial
  zeros</p></title>
  <aug>
    <au><snm>Petkovi\'c</snm><fnm>M.</fnm></au>
    <au><snm>Ili\'c</snm><fnm>S.</fnm></au>
    <au><snm>Petkovi\'c</snm><fnm>I.</fnm></au>
  </aug>
  <source>J. Comput. Appl. Math.</source>
  <pubdate>2007</pubdate>
  <volume>208</volume>
  <fpage>316</fpage>
  <lpage>-330</lpage>
</bibl>

<bibl id="B25">
  <title><p>Rounding {E}rrors in {A}lgebraic {P}rocesses</p></title>
  <aug>
    <au><snm>Wilkinson</snm><fnm>J.H.</fnm></au>
  </aug>
  <publisher>New Jersey: Prentice Hall</publisher>
  <pubdate>1963</pubdate>
</bibl>

</refgrp>
} 

\end{backmatter}

\end{document}